\newtheorem{theorem}{Theorem}
\newtheorem{lemma}{Lemma}
\newtheorem{false statement}{False statement}
\theoremstyle{definition}
\newtheorem{claim}{Claim}
\newtheorem{conjecture}{Conjecture}
\newtheorem{case}{Case}
\newcounter{mathitem}
  {\begin{list}{{$(\roman{mathitem})$}}{
   \setcounter{mathitem}{0}
   \usecounter{mathitem}
   \setlength{\topsep}{0pt plus 2pt minus 0pt}
   \setlength{\parskip}{0pt plus 2pt minus 0pt}
   \setlength{\partopsep}{0pt plus 2pt minus 0pt}
   \setlength{\parsep}{0pt plus 2pt minus 0pt}
   \setlength{\leftmargin}{25pt}
   \setlength{\itemsep}{0pt plus 2pt minus 0pt}}}
  {\end{list}}
\begin{document}

\title{\bf\Large Anti-Ramsey numbers for vertex-disjoint triangles\thanks{Supported by NSFC (No. 12071370, 12131013, 12171393 and U1803263) and Natural Science Foundation of Shaanxi Provincial Department of Education (2021JM-040).}}
\date{}

\author{Fangfang Wu$^{a,b}$,
Shenggui Zhang$^{a,b,}$\thanks{Corresponding author.
E-mail addresses:
wufangfang2017@mail.nwpu.edu.cn (F. Wu),
sgzhang@nwpu.edu.cn (S. Zhang),
binlongli@nwpu.edu.cn (B. Li), xiaojimeng@mail.nwpu.edu.cn (J. Xiao).},
Binlong Li$^{a,b}$,
Jimeng Xiao$^{a,b}$\\[2mm]
\small $^{a}$School of Mathematics and Statistics, \\
\small Northwestern Polytechnical University, Xi'an, Shaanxi 710129, China\\
\small $^{b}$Xi'an-Budapest Joint Research Center for Combinatorics, \\
\small Northwestern Polytechnical University, Xi'an, Shaanxi 710129, China
}
\maketitle

\begin{abstract}
An edge-colored graph is called rainbow if all the colors on its edges are distinct. Given a positive integer $n$ and a graph $G$, the anti-Ramsey number $ar(n, G)$ is the maximum number of colors in an edge-coloring of $K_{n}$ with no rainbow copy of $G$. Denote by $kC_{3}$ the union of $k$ vertex-disjoint copies of $C_{3}$. In this paper, we determine the anti-Ramsey number $ar(n, kC_{3})$ for $n=3k$ and $n\geq2k^{2}-k+2$, respectively. When $3k\leq n\leq 2k^{2}-k+2$, we give lower and upper bounds for $ar(n, kC_{3})$.

\medskip
\noindent {\bf Keywords:} anti-Ramsey number; complete graphs; vertex-disjoint triangles
\smallskip
\end{abstract}
\section{Introduction}\label{se1}
All graphs considered in this paper are finite and simple. For terminology and notations not defined here, we refer the reader to \cite{Bondy_Murty}.

Let $G$ be a graph. We denote by $V(G)$ and $E(G)$ the vertex set and the edge set of $G$, respectively, and use $v(G)$ and $e(G)$ to denote the numbers of vertices and edges in $G$. For a vertex $v$ of $G$ and a subgraph $H$ of $G$, $N_{H}(v)$ is the set of neighbors of $v$ contained in $H$. We let $d_{H}(v)=|N_{H}(v)|$. Thus $d_{G}(v)$ is the {\em degree} of $v$ in $G$. Denote by $\delta(G)$ and $\Delta(G)$ the {\em minimum and maximum degrees} of the vertices of $G$. For two vertex-disjoint subgraphs $F$ and $H$ of $G$, we use $E(F, H)$ to denote the set of edges in $G$ between $F$ and $H$, and $e(F, H)$ the number of edges between $F$ and $H$. If $F$ contains only one vertex $f$, we write $E(f, H)$ for $E(\{f\}, H)$ and $e(f, H)$ for $e(\{f\}, H)$. For a nonempty subset $V'\subseteq V(G)$ (or $E'\subseteq E(G)$), we use $G[V']$ (or $G[E']$) to denote the subgraph of $G$ {\em induced} by $V'$ (or $E'$), and $G-V'$ (or $G-E'$) to denote the subgraph $G[V(G)\setminus V']$ (or $G[E(G)\setminus E']$). When $V'=\{v\}$ (or $E'=\{e\}$), we use $G-v$ (or $G-e$) instead of $G-\{v\}$ (or $G-\{e\}$). Furthermore, for a subgraph $G'$ of $G$, we write $G-G'$ for $G-V(G')$.

An {\em edge-coloring} of $G$ is a mapping $C: E(G)\rightarrow \mathbb{N}$, where $\mathbb{N}$ is the natural number set. We call $G$ an {\em edge-colored graph} if it is assigned such an edge-coloring $C$ and use $C(G)$ to denote the set, and $c(G)$ the number of colors of edges in $G$. An edge-colored graph is called {\em rainbow} if all the colors on its edges are distinct. Given a positive integer $n$ and a graph $G$, the {\em anti-Ramsey number} $ar(n, G)$ is the maximum number of colors in an edge-coloring of $K_{n}$ with no rainbow copy of $G$ and the {\em Tur\'{a}n number} $ex(n, G)$ is the maximum number of edges of a graph on $n$ vertices containing no subgraph isomorphic to $G$. By taking one edge of each color in an edge-coloring of $K_{n}$, one can show that $ar(n, G)\leq ex(n, G)$.

Anti-Ramsey numbers were introduced by Erd\H{o}s, Simonovits and S\'{o}s \cite{ESS}. They showed that these are closely related to Tur\'{a}n numbers. Since then numerous results were established for a variety of graphs, including, among others, cycles \cite{Alon, ESS, JW, JSW, JL, MN}, cliques \cite{ESS, Sch, MN2}, paths \cite{SS}, matchings \cite{CLT, FKSS, HY, Sch} and trees \cite{Jiang, JW2}. If the readers want to know other related results, refer a survey paper \cite{FMO}.

An interesting open problem concerning anti-Ramsey numbers is the determination of the anti-Ramsey number of cycles. Erd\H{o}s, Simonovits and S\'{o}s \cite{ESS} conjectured that $ar(n, C_{l})=(\frac{l-2}{2}+\frac{1}{l-1})n+O(1)$ and proved it for $l=3$. Alon \cite{Alon} proved this conjecture for $l=4$ by showing that $ar(n, C_{4})=\big\lfloor\frac{4n}{3}\big\rfloor-1$. Jiang, Schiermeyer and West \cite{JSW} proved the conjecture for $l\leq7$. Finally, Montellano-Ballesteros and Neumann-Lara \cite{MN} completely proved this conjecture. Denote by $\Omega_{k}$ the union of $k$ vertex-disjoint cycles. Jin and Li \cite{JL} determined the anti-Ramsey number $ar(n, \Omega_{2})$ and gave an upper bound of $ar(n, \Omega_{k})$.

Let $kC_{3}$ denote the union of $k$ vertex-disjoint copies of $C_{3}$. Yuan and Zhang \cite{Yuan} provide the exact results of $ar(n, kC_{3})$ when $n$ is sufficiently large. In this paper, we improve the result of Yuan and Zhang \cite{Yuan} from $n$ sufficiently large to $n\geq2k^{2}-k+2$, and give some bounds and exact values for the other $n$. Note that the result of Erd\H{o}s, Simonovits and S\'{o}s \cite{ESS} implies $ar(n, C_{3})=n-1$. Therefore, we assume $k\geq2$ in the rest of the paper. Obviously, if $n<3k$, then $K_{n}$ does not have enough vertices for $k$ vertex-disjoint triangles, which implies that a rainbow $K_{n}$ does not have $kC_{3}$'s. Hence $ar(n, kC_{3})=\binom{n}{2}$ when $n<3k$. For $n\geq3k$, the question becomes nontrivial. We first give a lower bound for $ar(n, kC_{3})$.

\begin{theorem}\label{lb}
$ar(n, kC_{3})\geq\max\big\{\binom{3k-1}{2}+n-3k+1, \left\lfloor\frac{(n-k+2)^{2}}{4}\right\rfloor+(k-2)(n-k+2)+\binom{k-2}{2}+1\big\}$ for all $n\geq3k$.
\end{theorem}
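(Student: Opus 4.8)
The plan is to prove this lower bound by exhibiting, for each of the two quantities inside the maximum, an explicit edge-coloring of $K_n$ that uses exactly that many colors and contains no rainbow copy of $kC_3$. Since $ar(n,kC_3)$ is by definition at least the number of colors in any rainbow-$kC_3$-free coloring of $K_n$, producing both colorings yields the claimed bound.

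For the first quantity, $\binom{3k-1}{2}+n-3k+1$, I would partition $V(K_n)=A\cup B$ with $|A|=3k-1$ and $|B|=n-3k+1$, and colour the clique on $A$ rainbow, using $\binom{3k-1}{2}$ distinct colours. Fixing an ordering $b_1,\dots,b_m$ of $B$ (where $m=n-3k+1$), I would assign a fresh colour $c_i$ to every edge whose lowest-indexed endpoint in $B$ is $b_i$; this uses exactly $m$ new colours, for a total of $\binom{3k-1}{2}+n-3k+1$. The key observation is that any triangle meeting $B$ is non-rainbow: if $b_i$ is its lowest-indexed $B$-vertex, then the two edges of the triangle at $b_i$ both receive colour $c_i$. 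Since $|A|=3k-1<3k$, every copy of $kC_3$ must use at least one vertex of $B$, so at least one of its triangles meets $B$ and repeats a colour; hence no rainbow $kC_3$ exists.

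For the second quantity I would use the standard deletion construction. Let $S$ be a set of $k-2$ vertices joined completely to all of $V(K_n)$, and let the remaining $n-k+2$ vertices carry a balanced complete bipartite graph; call the resulting graph $H$, so that $e(H)=\lfloor(n-k+2)^2/4\rfloor+(k-2)(n-k+2)+\binom{k-2}{2}$. I would colour $H$ rainbow and give every edge outside $H$ a single additional colour, for a total of $e(H)+1$ colours. If some copy of $kC_3$ were rainbow, at most one of its $3k$ edges could carry the extra colour, so deleting that edge would leave $k-1$ vertex-disjoint triangles lying entirely inside $H$. But every triangle of $H$ must contain a vertex of $S$ (the remainder of $H$ being bipartite), and $|S|=k-2$, so $H$ contains no $(k-1)C_3$, a contradiction.

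The routine part is the arithmetic confirming that each coloring uses precisely the stated number of colours. The crux in both cases is the pigeonhole step that forces a colour repetition: for the first construction, that $3k-1<3k$ compels every $kC_3$ to enter $B$ and thereby hit two equally coloured edges; for the second, that only $k-2$ vertices of $H$ can serve as triangle apices while a near-rainbow $kC_3$ would require $k-1$ disjoint triangles inside $H$. Verifying these two structural facts, rather than the edge counts, is where the real work lies.
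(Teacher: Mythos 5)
Your proof is correct and matches the paper's own argument: both constructions (a rainbow $K_{3k-1}$ with one new color per additional vertex, and a rainbow $K_{k-2}\vee K_{\lfloor\frac{n-k+2}{2}\rfloor,\lceil\frac{n-k+2}{2}\rceil}$ with one extra color on all remaining edges) are exactly the two colorings the paper uses. You additionally spell out the pigeonhole arguments showing no rainbow $kC_{3}$ exists, which the paper leaves implicit, so your write-up is if anything more complete.
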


If $n=3k$, then the first number is greater than the second number in the formula of Theorem \ref{lb}. The following theorem shows that $ar(3k, kC_{3})$ is equal to the first number in the formula of Theorem \ref{lb}, which just attains the lower bound of Theorem \ref{lb}.

\begin{theorem}\label{5}
$ar(3k, kC_{3})=\binom{3k-1}{2}+1$.
\end{theorem}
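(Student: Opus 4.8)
The lower bound is immediate: putting $n=3k$ in Theorem \ref{lb}, its first term is $\binom{3k-1}{2}+3k-3k+1=\binom{3k-1}{2}+1$, so $ar(3k,kC_3)\ge\binom{3k-1}{2}+1$. The plan is therefore to prove the matching upper bound: every edge-colouring $C$ of $K_{3k}$ using at least $\binom{3k-1}{2}+2$ colours contains a rainbow $kC_3$. First I would pass to a spanning subgraph $R$ obtained by choosing exactly one edge of each colour, so that $R$ is rainbow and $e(R)=c(K_{3k})\ge\binom{3k-1}{2}+2$; equivalently its complement $\overline R$ inside $K_{3k}$ satisfies $e(\overline R)\le\binom{3k}{2}-\binom{3k-1}{2}-2=3k-3$. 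Since any $kC_3$ contained in $R$ is automatically rainbow, and since $n=3k$ means a $kC_3$ is exactly a triangle factor, the whole problem reduces to the Tur\'an-type question: \emph{how few non-edges can a graph on $3k$ vertices have and still fail to possess a triangle factor?}

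I would first dispose of $k\ge 4$ by showing that a graph $R$ on $3k$ vertices with $e(\overline R)\le 3k-3$ always has a triangle factor, contradicting the assumption that $R$ has none. The argument is an induction on $k$. If $\Delta(\overline R)\le k-1$, then $\delta(R)\ge 3k-1-(k-1)=2k$, and the Hajnal--Szemer\'edi theorem (with $r=3$, $n=3k$) supplies a triangle factor. Otherwise fix a vertex $v$ with $d_{\overline R}(v)\ge k$; its $R$-neighbourhood $T$ has $|T|=3k-1-d_{\overline R}(v)\ge 2$, while the number of non-edges inside $T$ is at most $(3k-3)-d_{\overline R}(v)<\binom{|T|}{2}$, so $R[T]$ contains an edge $ab$ and $\{v,a,b\}$ is a triangle of $R$. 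Deleting it leaves a graph on $3(k-1)$ vertices with at most $(3k-3)-d_{\overline R}(v)\le 2k-3$ non-edges, which is small enough to re-apply the statement for $k-1$. The base cases $k\le 3$ are finite checks; they also reveal that the only graphs on $3k$ vertices with exactly $3k-3$ non-edges and no triangle factor are the complete split graphs $K_{2k-1}\vee\overline K_{k+1}$, and that such graphs exist only for $k=2,3$.

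It then remains to treat $k\in\{2,3\}$, where $R$ may genuinely be $kC_3$-free; the reduction forces $R=K_{2k-1}\vee\overline K_{k+1}$, so the $\binom{k+1}{2}$ edges of $K_{3k}$ missing from $R$ (which for these $k$ is exactly $3k-3$) are precisely the pairs inside the independent part $I$ with $|I|=k+1$, and each such ``surplus'' edge merely repeats a colour already carried by an edge of $R$. Because $|I|=k+1>k$, every triangle factor must place at least two vertices of $I$ in a common triangle; I would focus on factors that use just one surplus edge $ij$ with $i,j\in I$, completing it to a factor whose other $3k-3$ edges all lie in $R$ and are hence rainbow. For such a factor, being rainbow depends only on whether $C(ij)$ avoids the colours of those $R$-edges, and I would show, by counting over the choices of the pair $\{i,j\}$, of the clique-vertex attached to them, and of the pairing of the remaining vertices---exactly as in the computation for $K_6=K_3\vee\overline K_3$, where among the three factors pairing two fixed independent vertices with a varying common clique-vertex at least one is always rainbow---that some admissible factor receives three distinct colours on the edges it does not share with $R$. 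This produces the desired rainbow $kC_3$.

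The main obstacle is precisely this last step for $k=2,3$. The easy bound $ar(n,G)\le ex(n,G)$ is \emph{not} tight here, because $ex(3k,kC_3)=4k^2-2k>\binom{3k-1}{2}+1$ for $k\in\{2,3\}$, so one cannot merely count edges of $R$; one is forced to analyse how the repeated colours on the surplus edges are distributed and to argue that they cannot simultaneously spoil every candidate triangle factor. A secondary technical point is pinning down the exact non-edge threshold (namely $3k-2$ versus $3k-3$) used in the Tur\'an step, together with the direct verification of the base cases $k\le 3$ on which the induction for $k\ge 4$ rests.
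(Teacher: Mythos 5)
Your proposal is correct and follows essentially the same route as the paper: pass to a representing subgraph with at most $3k-3$ non-edges, handle $k\ge 4$ by a Hajnal--Szemer\'edi-based induction, identify the only bad graphs for $k=2,3$ as the complete split graphs $K_{2k-1}\vee\overline{K}_{k+1}$, and then find a rainbow factor through one surplus edge by noting that its colour has a unique representative edge in $R$, which cannot lie in all candidate factors. The only differences are cosmetic: your induction finds the triangle by a neighbourhood count rather than the paper's component count, and the paper executes the final $k=2,3$ step by listing three explicit factors whose common edge-intersection is empty, which is exactly the pigeonhole you describe.
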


Our next theorem gives an upper bound of $ar(n, kC_{3})$ with $3k\leq n\leq 2k^{2}-k+2$.

\begin{theorem}\label{13}
$ar(n, kC_{3})\leq\left\lfloor\frac{(n-k+2)^{2}}{4}\right\rfloor+(k-2)(n-k+2)+\binom{k-2}{2}+(k-1)^{2}-\frac{n-3k}{2}+1$ for all $3k\leq n\leq2k^{2}-k+2$.
\end{theorem}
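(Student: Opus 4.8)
The plan is to bound the number of colors in an edge-coloring $c$ of $K_n$ that avoids a rainbow $kC_3$, comparing such a coloring against the near-extremal configuration underlying the second term of Theorem \ref{lb}. Fix a coloring $c$ using $N=ar(n,kC_3)$ colors with no rainbow $kC_3$, and write $f(n,k)$ for the claimed upper bound. The first move is standard: choose one edge of each color to obtain a rainbow spanning subgraph $R$ with $e(R)=N$. Since $R$ is rainbow, any $k$ vertex-disjoint triangles inside $R$ would form a rainbow $kC_3$; hence $R$ is $kC_3$-free and $N\le ex(n,kC_3)$. This bound is far too weak — it exceeds the target by order $n$ — so the essential point is to exploit the fact that in an extremal coloring a large triangle-free block of edges all shares a single color and therefore collapses to just one edge of $R$.

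To extract this saving I would work with a \emph{maximum rainbow triangle system}: a family $\mathcal{T}=\{T_1,\dots,T_t\}$ of vertex-disjoint triangles whose $3t$ edges receive $3t$ distinct colors, with $t$ as large as possible; the hypothesis forces $t\le k-1$. Put $W=\bigcup_i V(T_i)$ and $U=V(K_n)\setminus W$. By maximality, $U$ contains no triangle whose three edges are mutually distinct in color and avoid the palette of $\mathcal{T}$, for such a triangle could be appended to $\mathcal{T}$. Consequently, selecting one edge of each ``fresh'' color inside $U$ produces a triangle-free rainbow subgraph of $K_n[U]$, so the number of colors appearing only inside $U$ is at most $\lfloor |U|^2/4\rfloor$; the same mechanism, applied to parts that must be rainbow-triangle-free, is exactly where $ar(m,C_3)=m-1$ enters. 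Partitioning the palette into the $3t$ colors of $\mathcal{T}$, the colors incident to $W$, and the fresh colors inside $U$, and feeding in the Tur\'an value $ex(n,(k-1)C_3)=\lfloor(n-k+2)^2/4\rfloor+(k-2)(n-k+2)+\binom{k-2}{2}$, should reproduce the leading terms of the stated estimate.

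The main obstacle is that this packing has $|W|=3t$ as large as $3(k-1)$, whereas the extremal configuration behaves as though only $k-2$ vertices were dominating; a naive count of the colors incident to $W$ therefore overshoots by a term of order $kn$. Overcoming this requires a stability step: one must show that all but $k-2$ vertices of $W$ behave like ordinary vertices of the near-bipartite block, so that the colors they carry are almost entirely shared with the fresh colors of $U$ and contribute to the $\lfloor(n-k+2)^2/4\rfloor$ term rather than to a spurious dominating term. Making this collapse precise, and then squeezing out the exact additive correction $(k-1)^2-\tfrac{n-3k}{2}$, is the delicate part. The factor $-\tfrac{n-3k}{2}$ — an integer saving of one unit for every two added vertices — strongly suggests an induction lowering $(n,k)$ to $(n-3,k-1)$ by deleting one rainbow triangle, under which the correction term $-\tfrac{n-3k}{2}$ is invariant while the main terms of $f$ drop by roughly $2n$; the inductive step would hinge on locating a rainbow triangle whose incident edges introduce few private colors. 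I expect that verifying such a cheap triangle always exists, together with the boundary bookkeeping that makes the bound tight simultaneously at $n=3k$ (Theorem \ref{5}) and at $n=2k^2-k+2$, is where the real work lies.
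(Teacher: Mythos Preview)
Your proposal is a sketch with self-acknowledged holes, and those holes are exactly where the paper's proof does all its work. The paper's argument is \emph{not} via a maximum rainbow triangle packing and a color-partition count, nor via the induction $(n,k)\mapsto(n-3,k-1)$ you suggest. Instead it inducts on $n$ alone (with $k$ fixed), the base case $n=3k$ being Theorem~\ref{5}. The inductive engine is the \emph{color-saturated degree}: if some vertex $u$ has $d^{s}(u)\le\lfloor(n+k-3)/2\rfloor$, delete it and apply the hypothesis to $K_{n-1}$; otherwise every vertex saturates at least $\lfloor(n+k-1)/2\rfloor$ colors, and choosing one edge per saturated color at each vertex yields a rainbow spanning subgraph $H$ with $\delta(H)\ge(n+k-2)/2$. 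At that point the problem is no longer about colorings at all but about the structure of a $kC_3$-free graph of high minimum degree, and the paper invokes two substantial Dirac-type structure results (Lemmas~\ref{11} and~\ref{111}) to show that $H$ is either exactly $K_{k-2}\vee K_{(n-k+2)/2,(n-k+2)/2}$ or contains a $(k-1)C_3$ whose complement is complete bipartite with both parts of size at least $2$. In either case one finds a rainbow $kC_3$ directly, using at most two extra colors outside $H$.

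The ``stability step'' you flag as delicate is precisely the content of those two lemmas, and their proofs occupy the bulk of the paper (Section~\ref{se4}); they are not a routine consequence of the packing/counting framework you set up. Your alternative induction, deleting a rainbow triangle that is ``cheap'' in private colors, would require showing such a triangle always exists with at most roughly $2n$ incident colors, and there is no evident mechanism in your outline for producing it. The saving $-\tfrac{n-3k}{2}$ in the paper's bound does not come from an $(n,k)\to(n-3,k-1)$ descent; it arises because the induction-on-$n$ threshold for $d^{s}(u)$ is $\lfloor(n+k-3)/2\rfloor$, so each added vertex costs at most half a color more than the balanced-bipartite term accounts for, and this slack accumulates from the base $n=3k$ up to $n=2k^2-k+2$, where it exactly cancels the $(k-1)^2$.
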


When $n=3k$, the upper bound in Theorem \ref{13} is equal to $\binom{3k-1}{2}+1$. Theorem \ref{5} shows that $ar(3k, kC_{3})=\binom{3k-1}{2}+1$ attains the upper bound of Theorem \ref{13}. The anti-Ramsey number $ar(n, kC_{3})$ for $n\geq2k^{2}-k+2$ is determined in the following theorem, which matches the lower bound in Theorem \ref{lb}.

\begin{theorem}\label{12}
$ar(n, kC_{3})=\left\lfloor\frac{(n-k+2)^{2}}{4}\right\rfloor+(k-2)(n-k+2)+\binom{k-2}{2}+1$ for all $n\geq2k^{2}-k+2$.
\end{theorem}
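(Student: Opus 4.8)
The lower bound is precisely the second term inside the maximum of Theorem~\ref{lb}, so only the matching upper bound needs proof. Set
\[
f(n,k)=\Big\lfloor\tfrac{(n-k+2)^{2}}{4}\Big\rfloor+(k-2)(n-k+2)+\binom{k-2}{2},
\]
so that the claim is $ar(n,kC_{3})\le f(n,k)+1$ for $n\ge 2k^{2}-k+2$. The key structural remark is that, in this range, $f(n,k)=ex(n,(k-1)C_{3})$: the graph consisting of $k-2$ dominating vertices together with a balanced complete bipartite graph on the remaining $n-k+2$ vertices is $(k-1)C_{3}$-free, and a Tur\'an-type argument shows it is extremal once $n$ is this large. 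Thus the target is the ``$+1$'' statement $ar(n,kC_{3})=ex(n,(k-1)C_{3})+1$, typical of anti-Ramsey problems for vertex-disjoint subgraphs. Now suppose $C$ is an edge-colouring of $K_{n}$ with $c\ge f(n,k)+2$ colours and no rainbow $kC_{3}$; I aim for a contradiction. Selecting one edge of each colour yields a rainbow spanning subgraph $R$ with $e(R)=c$, and since a rainbow subgraph containing $kC_{3}$ would be a rainbow $kC_{3}$, we get $c\le ex(n,kC_{3})$. Unfortunately $ex(n,kC_{3})$ exceeds $f(n,k)+1$ by about $\tfrac12 n$, so this is too weak and the whole difficulty is to recover that linear saving. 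As a first upgrade, if $C$ had no rainbow $(k-1)C_{3}$ at all, the same $R$ would be $(k-1)C_{3}$-free, forcing $c\le ex(n,(k-1)C_{3})=f(n,k)$, a contradiction. Hence $C$ contains a rainbow $(k-1)C_{3}$, say $F=T_{1}\cup\cdots\cup T_{k-1}$ on a set $U$ with $|U|=3k-3$, while still having no rainbow $kC_{3}$. Write $W=V(K_{n})\setminus U$, so $|W|=n-3k+3$, and let $R_{F}$ be the set of $3(k-1)$ colours on $F$.

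The colours confined to $W$ are easy to handle. Choosing one representative edge for each colour appearing only inside $W$ gives a rainbow subgraph $G_{W}\subseteq K_{n}[W]$ whose colours avoid $R_{F}$; any triangle in $G_{W}$ would be a rainbow triangle, vertex- and colour-disjoint from $F$, and would complete a rainbow $kC_{3}$. So $G_{W}$ is triangle-free and, by Mantel's theorem, at most $\lfloor(n-3k+3)^{2}/4\rfloor$ colours are confined to $W$. Letting $D$ be the set of colours occurring on at least one edge meeting $U$, we have the exact split $c=|D|+(\text{colours confined to }W)$, and therefore
\[
c\le |D|+\Big\lfloor\tfrac{(n-3k+3)^{2}}{4}\Big\rfloor .
\]

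Bounding $|D|$ is the heart of the matter and the main obstacle. The trivial estimate $|D|\le\binom{3k-3}{2}+(3k-3)(n-3k+3)$ is of order $3kn$, whereas reaching $f(n,k)+1$ through the displayed inequality requires $|D|$ to be only of order $2kn$; equivalently, one must show that a $\Theta(kn)$ fraction of the edges meeting $U$ repeat colours already present, so that not all $3k-3$ vertices of $U$ can be rainbow-joined to $W$. The mechanism I would use is an exchange argument: if some $T_{i}$ has a vertex sending many privately coloured edges into $W$, then, combining these private colours with the triangle-free rainbow graph $G_{W}$ and with private colours at the other vertices of $U$, one can dismantle $T_{i}$ and reassemble $k$ pairwise vertex-disjoint rainbow triangles, producing a rainbow $kC_{3}$. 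This reassembly needs room, and that is exactly where the hypothesis $n\ge 2k^{2}-k+2$ enters: it guarantees $|W|\ge 2k^{2}-4k-1$, enough vertices to greedily route the required fresh colours without collision, and the threshold $2k^{2}-k+2$ is precisely the value that makes the final count tight. Once the resulting bound on $|D|$ is in hand, substituting it into the displayed inequality and simplifying gives $c\le f(n,k)+1$, contradicting $c\ge f(n,k)+2$ and completing the proof.

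An alternative to the global count is induction on $k$: one deletes the three vertices of a carefully chosen triangle so that $K_{n-3}$ inherits no rainbow $(k-1)C_{3}$, applies the inductive bound $ar(n-3,(k-1)C_{3})\le f(n-3,k-1)+1$ (the hypothesis $n\ge 2k^{2}-k+2$ keeps $n-3$ above the threshold $2(k-1)^{2}-(k-1)+2$ for every $k\ge 2$), and uses that $ex(n,(k-1)C_{3})-ex(n-3,(k-2)C_{3})$ is about $2n$. The step closes provided the deleted triangle meets at most about $2n$ colours not seen elsewhere, so both routes reduce to the same essential estimate: the edges at the fixed (or deleted) triangles cannot carry too many private colours. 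That estimate, powered by the size of $W$, is the crux of the argument.
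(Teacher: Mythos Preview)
Your proposal leaves the central step unproved. You correctly isolate the split $c\le |D|+\lfloor (n-3k+3)^{2}/4\rfloor$ and correctly identify that the whole difficulty is the bound on $|D|$, but you do not actually prove that bound: the ``exchange argument'' is only described in outline, with no concrete statement of how many private colours a vertex of $U$ may carry, no verification that the reassembly can be done while keeping the new triangles pairwise \emph{colour}-disjoint as well as vertex-disjoint, and no calculation showing that the threshold $n\ge 2k^{2}-k+2$ is exactly what makes the count close. The alternative induction on $k$ has the same gap, since the ``at most about $2n$ private colours on the deleted triangle'' is precisely the missing estimate in another guise. As written, the argument stops at the acknowledged obstacle.

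The paper's proof proceeds along an entirely different line and avoids this counting problem altogether. It inducts on $n$, not on $k$. The base case $n=2k^{2}-k+2$ is exactly where the upper bound of Theorem~\ref{13} collapses to $f(n,k)+1$, since $(k-1)^{2}-\tfrac{n-3k}{2}=0$ there. For the inductive step, if some vertex has colour-saturated degree at most $\lfloor\tfrac{n+k-2}{2}\rfloor$ one deletes it and applies induction; otherwise every vertex has $d^{s}(v)\ge\tfrac{n+k-1}{2}$, and one builds a rainbow spanning subgraph $H$ with $\delta(H)\ge\tfrac{n+k-1}{2}$. Dirac's theorem gives a $(k-1)C_{3}$ in $H$, and the structural Lemma~\ref{111} then forces $H$ minus this $(k-1)C_{3}$ to be a complete bipartite graph with both parts of size at least $2$; from that rigid structure a rainbow $kC_{3}$ follows in two lines. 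In other words, the paper replaces your global colour count by a minimum-degree structural lemma for ordinary (uncoloured) graphs, and the quadratic threshold $2k^{2}-k+2$ enters only through the base case via Theorem~\ref{13}, not through any room-to-route argument in $W$.
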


From the above conclusions, we believe that equality holds in the inequality of Theorem \ref{lb} and propose the following conjecture.

\begin{conjecture}\label{Con1}
$ar(n, kC_{3})=\max\big\{\binom{3k-1}{2}+n-3k+1, \left\lfloor\frac{(n-k+2)^{2}}{4}\right\rfloor+(k-2)(n-k+2)+\binom{k-2}{2}+1\big\}$ for all $n\geq3k$.
\end{conjecture}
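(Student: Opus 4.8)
The plan is to combine the lower bound already in hand with a single new upper bound covering the intermediate range. By Theorem~\ref{lb} the right-hand side is a valid lower bound for every $n\ge 3k$, so only the matching upper bound is at issue, and the first step is to see \emph{which} term of the maximum is active on each range. Writing $f(n)$ for the clique term $\binom{3k-1}{2}+n-3k+1$ minus the Tur\'an term, $f$ is (up to the floor) a downward parabola in $n$ with $f(3k)=(k-1)^2>0$ and $f(2k^2-k+2)=0$, so $f\ge 0$ on the whole interval $[3k,\,2k^2-k+2]$ and $f\le 0$ afterwards. Hence for $n\ge 2k^2-k+2$ the maximum is the Tur\'an term, which is exactly Theorem~\ref{12}, while for $3k\le n\le 2k^2-k+2$ the maximum is the clique term. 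Thus the entire open content of the conjecture is the one upper bound
\[
ar(n,kC_3)\le \binom{3k-1}{2}+n-3k+1,\qquad 3k< n< 2k^2-k+2,
\]
since the endpoint $n=3k$ is Theorem~\ref{5} and the endpoint $n=2k^2-k+2$ is Theorem~\ref{12}.

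To prove this bound I would induct on $n$, anchored at $n=3k$ by Theorem~\ref{5}, through the step $ar(n,kC_3)\le ar(n-1,kC_3)+1$; telescoping down to $n=3k$ then yields exactly $\binom{3k-1}{2}+1+(n-3k)=\binom{3k-1}{2}+n-3k+1$. Concretely, given an edge-colouring of $K_n$ with no rainbow $kC_3$, call a colour \emph{private to $v$} if its colour class lies entirely in the star at $v$. If some vertex $v$ carries at most one private colour, then deleting $v$ leaves a colouring of $K_{n-1}$ that still has no rainbow $kC_3$ and loses at most one colour, so the inductive hypothesis finishes the step. The whole difficulty is therefore concentrated in the complementary situation in which every vertex carries at least two private colours.

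This complementary case is where the argument must do real work, and I expect it to be the main obstacle. A vertex with many private colours is incident to many distinctly coloured edges, which is precisely what is needed to grow vertex-disjoint rainbow triangles; the plan is to exploit this abundance to assemble $k$ pairwise disjoint triangles whose $3k$ edges receive $3k$ distinct colours, contradicting the hypothesis. I would build these greedily, reserving for each new triangle a private edge that guarantees a fresh colour and controlling the interaction between already-used colours and the remaining vertices. The delicate point is quantitative: in the intermediate range $n$ is too small for the comfortable large-$n$ extraction underlying Theorem~\ref{12}, yet the colour budget $\binom{3k-1}{2}+n-3k+1$ is quadratic in $k$, so the greedy construction must be made to succeed exactly up to the threshold $n=2k^2-k+2$ and no further. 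I anticipate that making the bookkeeping tight will require running the $n$-induction simultaneously with an induction on $k$: remove one carefully chosen rainbow triangle together with its three colours to pass from $kC_3$ to $(k-1)C_3$, arranging that in the extremal clique configuration exactly $9(k-1)$ colours are lost, which is the increment $M(n,k)-M(n-3,k-1)$ demanded by the target formula, where $M(n,k)$ denotes the conjectured value. Reconciling the per-vertex increment of the $n$-induction with the per-triangle increment of the $k$-induction, and verifying the base case $k=2$ separately, is where I expect the technical heart of a full proof of Conjecture~\ref{Con1} to lie.
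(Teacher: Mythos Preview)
The statement you are attempting to prove is \emph{Conjecture}~\ref{Con1}; the paper does not prove it and offers no argument to compare against. The authors establish the lower bound (Theorem~\ref{lb}), the exact value at the endpoints $n=3k$ (Theorem~\ref{5}) and $n\ge 2k^2-k+2$ (Theorem~\ref{12}), and the weaker upper bound of Theorem~\ref{13} in between, and then explicitly leave the remaining upper bound as a conjecture. So your proposal is an attack on an open problem, not a reconstruction of a proof that exists in the paper.

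Your reduction of the open content is based on an incorrect computation. You claim that the clique term $\binom{3k-1}{2}+n-3k+1$ dominates the Tur\'an-type term on the entire interval $[3k,\,2k^2-k+2]$, asserting $f(2k^2-k+2)=0$. This is false: what vanishes at $n=2k^2-k+2$ is the gap $(k-1)^2-\tfrac{n-3k}{2}$ between the upper bound of Theorem~\ref{13} and the Tur\'an term, not the gap between the two lower-bound terms. For instance, when $k=2$ and $n=2k^2-k+2=8$ the clique term is $\binom{5}{2}+3=13$ while the Tur\'an term is $\lfloor 64/4\rfloor+1=17$; the crossover for $k=2$ already happens between $n=6$ and $n=7$. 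In general the two terms agree near $n\approx 3k+\sqrt{2}(k-1)$, far below $2k^2-k+2$. Consequently there is a genuine middle range in which the maximum in the conjecture is the Tur\'an term but $n<2k^2-k+2$, so neither Theorem~\ref{12} nor your proposed clique-term induction $ar(n,kC_3)\le ar(n-1,kC_3)+1$ even aims at the correct target there.

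Even on the sub-range where the clique term really is the maximum, your inductive step is not a proof: the case ``every vertex has at least two private colours'' is the whole problem, and two private colours per vertex gives a rainbow subgraph of minimum degree~$2$, which is nowhere near the $\tfrac{n+k}{2}$-type threshold needed to invoke the structural Lemmas~\ref{11}--\ref{111} or any known $kC_3$-packing criterion. The greedy/double-induction sketch you outline does not supply the missing mechanism, so the proposal does not close the gap that the paper itself leaves open.
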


The next section will be focused on introducing some lemmas, which are needed tools to derive our main results. Some additional definitions and notations will be introduced in the next section as well. The proofs of our theorems will be given in Section \ref{se3}.

\section{Preliminaries}\label{se2}
\begin{lemma}\label{3k}
Let $k\geq4$ be a positive integer. If $G$ is a graph on $3k$ vertices such that $e(G)\geq\binom{3k-1}{2}+2$, then $G$ contains a $kC_{3}$.
\end{lemma}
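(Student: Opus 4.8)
The plan is to pass to the complement. Write $\overline{G}$ for the complement of $G$ on the same $3k$ vertices. By Pascal's identity $\binom{3k}{2}-\binom{3k-1}{2}=3k-1$, so the hypothesis $e(G)\ge\binom{3k-1}{2}+2$ is exactly the statement $e(\overline{G})\le 3k-3$; moreover a $kC_3$ in $G$ is nothing but a partition of $V(G)$ into $k$ triples, each of which is an independent set of $\overline{G}$. Thus the entire task becomes: a graph on $3k$ vertices with at most $3k-3$ edges admits a partition of its vertices into $k$ independent triples. I would keep track throughout of $e(\overline{G})$ and of $\Delta(\overline{G})$, since these are the quantities that control everything.

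The engine is the Hajnal--Szemer\'edi theorem (equivalently, the triangle-factor form of the Corr\'adi--Hajnal theorem): if $\delta(G)\ge 2k$, i.e.\ $\Delta(\overline{G})\le k-1$, then $\overline{G}$ has an equitable $k$-colouring, whose $k$ colour classes are independent sets of size exactly $3$, i.e.\ a $kC_3$ in $G$. So the first step is to check whether $\Delta(\overline{G})\le k-1$; if so, we are done immediately. Otherwise some vertex $v$ has $d_{\overline{G}}(v)\ge k$, and I would deal with such vertices by \emph{peeling}. First I record that $v$ lies in a triangle of $G$: if $N_G(v)$ were independent in $G$, then $\overline{G}$ would contain at least $d_{\overline{G}}(v)+\binom{3k-1-d_{\overline{G}}(v)}{2}$ edges, a quantity which (since $d_{\overline{G}}(v)\le e(\overline{G})\le 3k-3$) is minimised at $d_{\overline{G}}(v)=3k-3$ and so is at least $3k-2>3k-3$, a contradiction. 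Hence $v$ sits in a triangle $T$ whose other two vertices are $G$-neighbours of $v$; all $d_{\overline{G}}(v)\ge k$ complement edges at $v$ leave $V(T)$, so deleting $T$ destroys at least $k$ complement edges, leaving a graph on $3(k-1)$ vertices with at most $(3k-3)-k=2k-3$ complement edges.

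Now I would iterate this. At stage $j$ we have a graph on $3(k-j)$ vertices; the relevant Hajnal--Szemer\'edi threshold is $(k-j)-1$. If the current complement maximum degree is at most $(k-j)-1$ we finish with the engine; otherwise we peel a triangle through a vertex of complement degree at least $k-j$, deleting at least $k-j$ further complement edges. The same neighbourhood estimate as above shows the peeled triangle always exists, provided the current complement-edge count $E_j$ satisfies $E_j\le 3(k-j)-3$, and one checks this holds at every stage for $k\ge4$ (e.g.\ $E_1\le 2k-3\le 3k-6$, $E_2\le k-2\le 3k-9$). Tracking the budget, three successive peels remove at least $k+(k-1)+(k-2)=3k-3$ complement edges, exactly the whole supply, so $E_3=0$ and the remaining graph is complete; hence after at most three peels either the engine applies or the remainder is trivially triangle-factorable, and adjoining the peeled triangles gives the required $kC_3$.

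The main obstacle, and the reason the hypothesis $k\ge4$ is indispensable, is keeping the complement-edge budget strictly below the values at which a triangle factor genuinely fails for smaller parameters: on $6$ vertices a complement triangle ($3$ edges) and on $9$ vertices a complement $K_4$ ($6$ edges) block every factor, so a careless reduction to $k=2$ or $k=3$ would be fatal. The entire force of peeling through a vertex of \emph{maximum} complement degree (rather than through an arbitrary triangle) is that each peel removes at least as many complement edges as the drop in the threshold, so the reduced instances stay comfortably inside the solvable range; the arithmetic $k+(k-1)+(k-2)=3k-3$ is precisely what makes the bound $3k-3$ tight, forces termination within three peels, and keeps the reductions away from the obstructed small cases.
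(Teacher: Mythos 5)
Your proof is correct and takes essentially the same route as the paper: pass to the complement (so the hypothesis reads $e(\overline{G})\le 3k-3$), use the Hajnal--Szemer\'edi theorem as the engine, and when its degree condition fails, peel off a triangle of $G$ through a vertex of complement degree above the threshold, which deletes at least that many complement edges. The only local differences are that the paper certifies the peeled triple by a component count (a graph on $3k$ vertices with at most $3k-3$ edges has at least three components, so an independent triple of $\overline{G}$ through the high-degree vertex exists) rather than by your non-neighbourhood edge count, and it packages the descent as an induction on $k$ with a separately proved base case $k=4$, whereas your budget $k+(k-1)+(k-2)=3k-3$ unrolls the same descent into at most three peels and dispenses with the base case.
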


The {\em complement} of a graph $G$, denote by $\overline{G}$, is the graph whose vertex set is $V(G)$ and whose edges are the pairs of nonadjacent vertices of $G$. By considering complements, we obtain the following equivalent formulation of Lemma \ref{3k}.

\begin{lemma}\label{3k'}
Let $k\geq4$ be a positive integer. If $G$ is a graph on $3k$ vertices such that $e(G)\leq3k-3$, then $G$ is $k$-partite with parts of size $3$.
\end{lemma}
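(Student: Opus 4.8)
The plan is to derive this directly from Lemma~\ref{3k} by passing to the complement, since the two statements are designed to be equivalent. First I would unpack the target conclusion: saying that $G$ is $k$-partite with parts of size $3$ means exactly that $V(G)$ admits a partition into $k$ sets of size $3$, each of which is independent in $G$. A set of three vertices is independent in $G$ precisely when those three vertices induce a triangle in the complement $\overline{G}$. Hence ``$G$ is $k$-partite with parts of size $3$'' is equivalent to the assertion that $\overline{G}$ contains $k$ pairwise vertex-disjoint triangles; and because $G$ (and therefore $\overline{G}$) has exactly $3k$ vertices, any such family of $k$ disjoint triangles is automatically spanning, so this is the same as saying $\overline{G}$ contains a copy of $kC_{3}$.

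With this reformulation in hand, the remaining work is purely an edge-count translation. I would set $\overline{G}$ to be the complement and record that $e(\overline{G}) = \binom{3k}{2} - e(G)$. Using the hypothesis $e(G) \le 3k-3$ together with the identity $\binom{3k}{2} - \binom{3k-1}{2} = 3k-1$, I would check that
\[
e(\overline{G}) = \binom{3k}{2} - e(G) \ge \binom{3k}{2} - (3k-3) = \binom{3k-1}{2} + 2.
\]
Thus $\overline{G}$ is a graph on $3k$ vertices satisfying the edge hypothesis of Lemma~\ref{3k}, and applying that lemma (valid since $k \ge 4$) yields a copy of $kC_{3}$ in $\overline{G}$. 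By the equivalence established in the first step, this is exactly the statement that $G$ is $k$-partite with parts of size $3$, completing the proof.

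Since all of the combinatorial content is already carried by Lemma~\ref{3k}, I do not expect a genuine obstacle here: the statement is a reformulation rather than a new result. The only points requiring care are the two bookkeeping checks---verifying the arithmetic identity $\binom{3k}{2} - (3k-3) = \binom{3k-1}{2} + 2$ so that the edge bound lands exactly on the threshold of Lemma~\ref{3k}, and confirming that a family of $k$ vertex-disjoint triangles in a $3k$-vertex graph must cover every vertex, so that ``contains $kC_{3}$'' and ``partitions into $k$ independent triples'' genuinely coincide. Neither is delicate, but both are essential for the equivalence to be exact rather than merely one-directional.
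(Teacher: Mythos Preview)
Your derivation of Lemma~\ref{3k'} from Lemma~\ref{3k} via complementation is formally correct, and the arithmetic and the equivalence between ``$\overline{G}$ contains $kC_{3}$'' and ``$G$ is $k$-partite with parts of size $3$'' are both fine. However, in the paper this argument is circular: Lemma~\ref{3k} is stated without proof, and the sentence immediately following it already records that Lemma~\ref{3k'} is its equivalent complementary reformulation. The paper's proof of Lemma~\ref{3k'} is precisely what \emph{establishes} Lemma~\ref{3k}; there is no independent proof of Lemma~\ref{3k} to appeal to. Your proposal therefore reproduces the equivalence the paper has already asserted, but supplies none of the actual combinatorial content.

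The paper's proof of Lemma~\ref{3k'} is a direct argument by induction on $k$, with the base case $k=4$ handled as a separate claim. The engine in both the base case and the inductive step is the Hajnal--Szemer\'{e}di theorem (Theorem~\ref{HS}): if $\Delta(G)\le k-1$ one is done immediately, and otherwise one takes a vertex $u$ of degree at least $k$, observes that the edge bound forces at least three components so that $u$ can be placed in an independent set $A$ of size $3$, and then removes $A$ to drop to a graph on $3(k-1)$ vertices with at most $3(k-1)-3$ edges, to which the induction hypothesis applies. That inductive argument is the missing substance your proposal would need to replace.
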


In the proof of Lemma \ref{3k'} we shall make use of the following result.

\begin{theorem}[Hajnal and Szemer\'{e}di \cite{Hajnal_Szemeredi}]\label{HS}
Let $k$ be a positive integer and $G$ a graph on $3k$ vertices with $\Delta(G)\leq k-1$. Then $G$ is $k$-partite with parts of size $3$.
\end{theorem}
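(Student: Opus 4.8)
The plan is to recast the statement by complementation and then prove the equivalent triangle-factor version by an extremal argument. Write $H=\overline{G}$ on the same vertex set $V$ with $|V|=3k$. A set of three vertices is independent in $G$ if and only if it is a triangle in $H$, so the assertion ``$G$ is $k$-partite with parts of size $3$'' is precisely the statement that $H$ has a \emph{triangle factor}, i.e.\ a partition of $V$ into $k$ vertex-disjoint triangles. Moreover $d_H(v)=(3k-1)-d_G(v)\ge (3k-1)-(k-1)=2k$ for every $v$, so $\delta(H)\ge 2k$. Hence it suffices to prove: every graph $H$ on $3k$ vertices with $\delta(H)\ge 2k$ admits a triangle factor. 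This is exactly the triangle case of the Hajnal--Szemer\'{e}di / Corr\'{a}di--Hajnal threshold, and the bound $\delta(H)\ge 2k$ is best possible.

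First I would set up a maximum packing. Among all families of vertex-disjoint triangles in $H$, fix one, $\mathcal{T}=\{T_1,\dots,T_m\}$, with $m$ as large as possible, and let $U=V\setminus\bigcup_i V(T_i)$ be the set of uncovered vertices. If $m=k$ we are done, so assume $m\le k-1$; then $|U|=3(k-m)\ge 3$, and by maximality $H[U]$ contains no triangle (otherwise that triangle could be added to $\mathcal{T}$). The goal is to derive a contradiction by producing an \emph{augmentation}: three uncovered vertices together with one triangle $T_i$ that can be rearranged into two vertex-disjoint triangles, yielding a family of size $m+1$.

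Next comes the counting-and-exchange step. Fix $u_1,u_2,u_3\in U$. Each $u_j$ has at most $|U|-1$ neighbours inside $U$, hence at least $2k-(|U|-1)$ neighbours among the $3m$ covered vertices; summing over $j$ and comparing with the total capacity $9m$ (each covered vertex absorbs at most $3$ edges from $\{u_1,u_2,u_3\}$) shows that the edges from $\{u_1,u_2,u_3\}$ into the triangles are numerous enough to force some $T_i=xyz$ to be met in a favourable pattern. One then uses these adjacencies, together with the sparse edges inside $U$, to build two disjoint triangles on $\{u_1,u_2,u_3,x,y,z\}$ --- for instance $u_1u_2x$ and $u_3yz$, or $u_1xy$ and $u_2u_3z$ --- contradicting the maximality of $m$.

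The main obstacle is the exchange step, not the counting, and this is exactly what makes Hajnal--Szemer\'{e}di a substantial theorem. The averaging only guarantees that a typical triangle meets $\{u_1,u_2,u_3\}$ in about two edges, which does not by itself force a clean swap: the edges may be distributed so that no single triangle is triply hit by one $u_j$, and then the rearrangement must combine adjacencies across different $u_j$ or exploit an edge inside $U$. Organising these possibilities into a finite case analysis, and checking that each successful case genuinely increases $m$ rather than merely relabelling the same packing, is the delicate heart of the argument, and the threshold $\delta(H)\ge 2k$ is used tightly throughout. An alternative and equally valid route is to invoke Corr\'{a}di and Hajnal's theorem that a graph on $n\ge 3k$ vertices with $\delta\ge 2k$ contains $k$ vertex-disjoint cycles: for $n=3k$ these cycles must cover all $3k$ vertices, so each has length exactly $3$, which yields the factor at once.
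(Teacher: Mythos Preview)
The paper does not supply its own proof of this statement: Theorem~\ref{HS} is quoted as a result of Hajnal and Szemer\'{e}di and used as a black box (in the proof of Lemma~\ref{3k'} and in the case analysis for $k=3$ in Theorem~\ref{5}). So there is no in-paper argument to compare your proposal against.

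As for the proposal itself, your complementation step is exactly right and reduces the assertion to the Corr\'{a}di--Hajnal / Hajnal--Szemer\'{e}di threshold $\delta(H)\ge 2k$ for a triangle factor on $3k$ vertices. The alternative route you mention at the end --- invoking Corr\'{a}di--Hajnal to get $k$ vertex-disjoint cycles, which on $3k$ vertices are forced to be triangles --- is a clean and correct derivation, though of course it trades one cited deep theorem for another. Your self-contained packing-and-exchange sketch, however, is not a proof: you set up the extremal configuration and the averaging, but then explicitly concede that the exchange step is ``the delicate heart of the argument'' without carrying it out. The difficulty is real --- the naive count you indicate gives roughly $9m-3k+3$ edges from three uncovered vertices into the $m$ packed triangles, which for $m\le k-1$ does not by itself pin down a single triangle with a usable pattern --- and the actual Hajnal--Szemer\'{e}di proof requires a much more elaborate recursive recolouring, not a one-step swap. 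So as written your proposal is an accurate outline of why the theorem is hard, but it is not a complete argument.
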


\noindent\textbf{Proof of Lemma \ref{3k'}.}
We prove the lemma by induction on $k$ and start with the following claim.

\begin{claim}\label{c1}
If $v(G)=12$ and $e(G)\leq9$, then $G$ is $4$-partite with parts of size 3.
\end{claim}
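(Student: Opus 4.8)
The plan is to split on the maximum degree of $G$. If $\Delta(G)\le 3$, then Theorem~\ref{HS} applied with $k=4$ immediately produces a partition of $V(G)$ into four independent sets of size $3$, which is exactly the assertion that $G$ is $4$-partite with parts of size $3$. So the whole difficulty lies in the case $\Delta(G)\ge 4$, which I would handle by peeling off one independent triple at a time and exploiting the sparsity of $G$.

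Suppose $v$ is a vertex of maximum degree $d:=d_G(v)\ge 4$, and let $S=V(G)\setminus(N_G(v)\cup\{v\})$ be its set of non-neighbours, so $|S|=11-d$. Since the $d$ edges at $v$ already consume part of the budget of $9$, we have $e(G[S])\le 9-d$, whereas a complete graph on $S$ would require $\binom{11-d}{2}$ edges; checking each value $4\le d\le 9$ shows $\binom{11-d}{2}>9-d$, so $G[S]$ is not complete and contains two non-adjacent vertices $x,y$. Then $\{v,x,y\}$ is an independent triple, and deleting it leaves a graph on $9$ vertices with at most $9-d\le 5$ edges. One might now hope to finish by recursing on a graph needing one fewer part, but here is the main obstacle: the analogue of Lemma~\ref{3k'} for three parts is simply false (for instance a $K_4$ together with isolated vertices has $6$ edges on $9$ vertices yet is not $3$-partite with parts of size $3$), so I cannot blindly recurse.

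The way around this is to carry the exact edge count through the peeling and observe that it drops quickly enough to stay out of the bad range. Removing an independent triple through a vertex of degree at least $k$ costs at least $k$ edges, so the budget falls $9\to5\to2\to0$ as the number of remaining parts falls $4\to3\to2\to1$. At each stage I invoke Theorem~\ref{HS} the moment the maximum degree is at most $k-1$ (all that is needed to obtain $k$ parts of size $3$), and otherwise repeat the extraction; the same counting as above shows the non-neighbourhood of a maximum-degree vertex is never complete, so a removable independent triple always exists. The terminal stage reaches $3$ vertices with no edges, which trivially form the last independent triple, and reassembling the extracted triples yields the desired partition of $G$. The only points needing care are the elementary verification that the non-neighbourhood is non-complete at each step and that the residual maximum degree has fallen to at most $k-1$ before applying Theorem~\ref{HS}; both reduce to the easy inequalities recorded above.
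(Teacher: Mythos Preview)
Your proposal is correct and follows essentially the same peeling strategy as the paper: apply Hajnal--Szemer\'edi when $\Delta\le k-1$, and otherwise remove an independent triple through a high-degree vertex and recurse, with the edge budget dropping $9\to5\to2$. The only difference is in how the independent triple is located: you argue that the non-neighbourhood of a high-degree vertex cannot be complete, whereas the paper instead counts components (e.g.\ $12$ vertices with at most $9$ edges force at least $3$ components) and picks one vertex from each of two components other than that of the high-degree vertex.
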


\begin{proof}
By Theorem \ref{HS}, we can assume that there exists a vertex $u\in V(G)$ such that $d_{G}(u)\geq4$. Since $v(G)=12$ and $e(G)\leq9$, the number of components of $G$ is at least 3. Thus there is an independent set $A$ in $G$ such that $u\in A$ and $|A|=3$. Let $G'=G-A$. Since $d_{G}(u)\geq4$ and $e(G)\leq9$, we have $e(G')\leq5$. If $G'$ is $3$-partite with parts of size 3, then $G$ is $4$-partite with parts of size 3. So by Theorem \ref{HS}, we assume that there exists a vertex $w\in V(G')$ such that $d_{G'}(w)\geq3$. Since $v(G')=9$ and $e(G')\leq5$, the number of components of $G'$ is at least 4. Thus there is an independent set $B$ in $G'$ such that $w\in B$ and $|B|=3$. Let $G''=G'-B$. Since $d_{G'}(w)\geq3$ and $e(G')\leq5$, we have $e(G'')\leq2$. Clearly, $G''$ is $2$-partite with parts of size 3, which implies that $G'$ is $3$-partite with parts of size 3. Further, we have $G$ is $4$-partite with parts of size 3.
\end{proof}

The base case for $k=4$ can be obtained from Claim \ref{c1}. By Theorem \ref{HS}, we may assume that there exists a vertex $u\in V(G)$ such that $d_{G}(u)\geq k$. Since $v(G)=3k$ and $e(G)\leq3k-3$, the number of components of $G$ is at least $3$. Thus there is an independent set $A$ in $G$ such that $u\in A$ and $|A|=3$. Let $G'=G-A$. Since $d_{G}(u)\geq k$ and $e(G)\leq3k-3$, we have $e(G')\leq3k-3-k<3k-3-3=3(k-1)-3$. Thus, by the induction hypothesis, we see that $G'$ is $(k-1)$-partite with parts of size $3$ which implies that $G$ is $k$-partite with parts of size $3$. {\hfill$\Box$}

The following two lemmas are technical lemmas that deal with the construction of vertex-disjoint triangles in a given graph.

\begin{lemma}[Wang \cite{Wang}]\label{W1}
Let $P=uvw$ be a path and $C$ a triangle in $G$ such that $C$ is vertex-disjoint from $P$. Then the following hold:

({\romannumeral1})~If $e(u, C)+e(w, C)\geq5$ and $e(v, C)\geq1$, then $G[V(P)\cup V(C)]$ contains two vertex-disjoint triangles;

({\romannumeral2})~If $e(P, C)\geq7$, then either $G[V(P)\cup V(C)]$ contains two vertex-disjoint triangles, or $e(u, C)=e(w, C)=2$, $N_{C}(u)=N_{C}(w)$ and $e(v, C)=3$.
\end{lemma}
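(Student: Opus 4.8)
\medskip
\noindent\textbf{Proof proposal.}
The first thing to record is that $V(P)\cup V(C)$ has exactly six vertices, so any two vertex-disjoint triangles inside $G[V(P)\cup V(C)]$ must use all six of them; finding such a pair is therefore the same as partitioning $\{u,v,w\}\cup V(C)$ into two triples each of which induces a triangle. Since $C$ is already a triangle, the natural splits put two of $u,v,w$ together with one vertex of $C$ in one triple and the remaining path-vertex with the other two vertices of $C$ in the other. For a split $\{p,q,c\}\cup(\{r\}\cup(V(C)\setminus\{c\}))$ to work we only need $pq$ to be a path-edge, $c$ to be a common neighbour of $p$ and $q$, and $r$ to be adjacent to both vertices of $V(C)\setminus\{c\}$. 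The whole proof is then a bookkeeping of the quantities $e(u,C),e(v,C),e(w,C)$ to decide which split succeeds.

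For (i), the plan is as follows. Since $e(u,C)+e(w,C)\geq5$ and each term is at most $3$, one of the endpoints, say $u$ after relabelling, satisfies $e(u,C)=3$, i.e. $u$ is adjacent to all of $C$; hence $\{u\}\cup S$ is a triangle for every $2$-subset $S\subseteq V(C)$, and it remains to build a triangle on $v$, $w$ and the leftover vertex of $C$. I would pick $c\in N_C(v)$, which is possible because $e(v,C)\geq1$. If $c\in N_C(w)$ as well, then $\{v,w,c\}$ and $\{u\}\cup(V(C)\setminus\{c\})$ are the two triangles. Otherwise $N_C(v)\cap N_C(w)=\emptyset$, which together with $e(v,C)\geq1$ and $e(w,C)\geq2$ forces $|N_C(v)|=1$ and $|N_C(w)|=2$; then $\{u,v,c\}$ (using $uc\in E$) and $\{w\}\cup N_C(w)$ are the two triangles.

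For (ii), I would reduce to (i) wherever possible. From $e(P,C)\geq7$ and each $e(\cdot,C)\leq3$ one gets $e(v,C)\geq1$ automatically. If $e(v,C)\leq2$ then $e(u,C)+e(w,C)\geq5$ and part (i) finishes. So assume $e(v,C)=3$, whence $e(u,C)+e(w,C)\geq4$; if this sum is at least $5$, part (i) again applies. The residual case is $e(u,C)+e(w,C)=4$ with $v$ adjacent to all of $C$. If one endpoint, say $u$, has $e(u,C)=3$, pick $c\in N_C(w)$ and take $\{v,w,c\}$ together with $\{u\}\cup(V(C)\setminus\{c\})$. Otherwise $e(u,C)=e(w,C)=2$, so $N_C(u)$ and $N_C(w)$ are $2$-subsets of the $3$-set $V(C)$. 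If $N_C(u)\neq N_C(w)$ they share exactly one vertex, so there is $x\in N_C(u)\setminus N_C(w)$; then $\{u,v,x\}$ and $\{w\}\cup N_C(w)$ are two vertex-disjoint triangles. The only remaining possibility is $N_C(u)=N_C(w)$ with $e(u,C)=e(w,C)=2$ and $e(v,C)=3$, which is exactly the stated exceptional configuration.

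The mathematics here is elementary and entirely constructive, so the main obstacle is organisational rather than conceptual: one must split the cases so that the degree inequalities force precisely the claimed triple, and keep the relabelling between $u$ and $w$ consistent. The one genuinely structural point is recognising that the $(2,2)$-case with $N_C(u)=N_C(w)$ is an obstruction rather than something to be salvaged; to be sure the exception is stated correctly I would (optionally) check directly that on these six vertices, when $uw\notin E(G)$, every triangle through $u$ leaves a non-triangle on the other three vertices, so that no two vertex-disjoint triangles exist.
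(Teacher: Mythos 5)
Your proof is correct. Note, however, that the paper itself offers no proof of this lemma to compare against: it is quoted verbatim as a known result of Wang \cite{Wang} (Lemma \ref{W1} is stated with the attribution ``Wang'' and used as a black box), so your argument is effectively supplying a self-contained proof of a cited result rather than reproducing an internal one. Checking your argument on its own merits: the opening observation that two vertex-disjoint triangles in $G[V(P)\cup V(C)]$ must partition all six vertices is what makes the case analysis finite; in (i) the dichotomy $N_C(v)\cap N_C(w)\neq\emptyset$ versus $N_C(v)\cap N_C(w)=\emptyset$ (the latter forcing $|N_C(v)|=1$, $|N_C(w)|=2$ by disjointness inside a $3$-set) is exhaustive and both constructed pairs of triangles are valid; in (ii) the reduction to (i) whenever $e(v,C)\le 2$ or $e(u,C)+e(w,C)\ge 5$ is clean, and the residual case $e(v,C)=3$, $e(u,C)+e(w,C)=4$ is split correctly into the $(3,1)$ and $(2,2)$ endpoint distributions, with the $(2,2)$, $N_C(u)\neq N_C(w)$ subcase resolved by taking $x\in N_C(u)\setminus N_C(w)$ and the remaining subcase being exactly the stated exception. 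The relabelling of $u$ and $w$ is legitimate since both hypotheses and the exceptional conclusion are symmetric under reversing the path. Your closing remark is also the right reading of the statement: since the ``or'' is inclusive, no verification that the exceptional configuration actually lacks two disjoint triangles is required (indeed it may contain them if $uw\in E(G)$), so that check is optional, as you say.
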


\begin{lemma}\label{l4}
Let $M$ be a matching of size $2$ and $C$ a triangle in $G$ such that $C$ is vertex-disjoint from $M$. If $e(M, C)\geq9$, then $G[V(M)\cup V(C)]$ contains two vertex-disjoint triangles.
\end{lemma}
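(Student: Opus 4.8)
The plan is to exhibit two vertex-disjoint triangles explicitly, each built from one edge of the matching together with a common neighbour of its endpoints in $C$. Write $M=\{x_1y_1,\,x_2y_2\}$ and $V(C)=\{a,b,c\}$, and for $i\in\{1,2\}$ set $S_i=N_C(x_i)\cap N_C(y_i)$, the set of vertices of $C$ adjacent to both endpoints of the $i$-th matching edge. The point of $S_i$ is that for every $v\in S_i$ the edge $x_iy_i$ together with $v$ spans a triangle $x_iy_iv$. Hence if we can choose \emph{distinct} $v_1\in S_1$ and $v_2\in S_2$, then $x_1y_1v_1$ and $x_2y_2v_2$ are two vertex-disjoint triangles and we are done.

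First I would record the elementary counting bound that controls the sizes of the $S_i$. For each $i$ we have $d_C(x_i)+d_C(y_i)=|N_C(x_i)\cup N_C(y_i)|+|N_C(x_i)\cap N_C(y_i)|\le 3+|S_i|$, since $|N_C(x_i)\cup N_C(y_i)|\le|V(C)|=3$. Summing over $i=1,2$ gives $e(M,C)=d_C(x_1)+d_C(y_1)+d_C(x_2)+d_C(y_2)\le 6+|S_1|+|S_2|$, and the hypothesis $e(M,C)\ge 9$ therefore yields $|S_1|+|S_2|\ge 3$. This single inequality drives the whole argument.

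With $|S_1|+|S_2|\ge 3$ in hand, I would check that distinct representatives $v_1\in S_1$, $v_2\in S_2$ can always be chosen except in a short list of degenerate configurations: the choice fails only if some $S_i$ is empty, or if $S_1=S_2$ is a single common vertex. The latter would force $|S_1|+|S_2|=2$, which is excluded by the bound, so the only genuine obstruction is the case $S_1=\emptyset$ (and, symmetrically, $S_2=\emptyset$). This is the one point I expect to require separate care.

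Finally I would dispose of that case. If $S_1=\emptyset$ then the bound forces $|S_2|\ge 3$, hence $S_2=V(C)$, so $x_2$ and $y_2$ are each adjacent to all of $C$ and $x_2y_2v$ is a triangle for every $v\in V(C)$. Moreover $S_1=\emptyset$ means $N_C(x_1)$ and $N_C(y_1)$ are disjoint, so one of $x_1,y_1$ — say $x_1$ — has at least two neighbours in $C$, say $a$ and $b$. Then $x_1ab$ is a triangle (using the edge $ab$ of $C$), and $x_2y_2c$ is a triangle vertex-disjoint from it, which completes the proof; the case $S_2=\emptyset$ is symmetric.
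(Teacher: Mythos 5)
Your proof is correct. It uses the same two building blocks as the paper's proof --- a triangle formed by a matching edge together with a common neighbour on $C$, and, in the degenerate situation, a triangle formed by one matching endpoint together with an edge of $C$ --- but the bookkeeping is genuinely different. The paper argues by cases on how the nine edges split across the two matching edges (without loss of generality $5+4$ or $6+3$): in the $5+4$ case the lighter matching edge still has a common neighbour on $C$ by pigeonhole, and distinct representatives are exhibited by hand; in the $6+3$ case it switches to the vertex-plus-$C$-edge triangle. Your inclusion--exclusion inequality $e(M,C)\le 6+|S_1|+|S_2|$, which gives $|S_1|+|S_2|\ge 3$, replaces that case split entirely: it makes the choice of distinct representatives automatic and isolates exactly one exceptional configuration, namely $S_i=\emptyset$, which you then handle by the same device the paper uses in its $6+3$ case. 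The net constructions coincide, but your single counting inequality is a cleaner organizing principle than the paper's case analysis on the edge distribution.

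One imprecision in your degenerate case: you justify ``one of $x_1,y_1$ has at least two neighbours in $C$'' by the disjointness of $N_C(x_1)$ and $N_C(y_1)$, but disjointness alone does not imply this (each endpoint could have exactly one neighbour). What you actually need, and what holds, is $e(x_1y_1,C)\ge e(M,C)-e(x_2y_2,C)\ge 9-6=3$, after which pigeonhole gives an endpoint with at least two neighbours in $C$; disjointness is never needed. This is a one-line repair, not a gap.
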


\noindent\textbf{Proof of Lemma \ref{l4}.}
Let $M=\{ab, cd\}$ and $C=v_{1}v_{2}v_{3}v_{1}$. Since $e(M, C)\geq9$, it may be assumed without loss of generality that one of the alternatives $e(ab, C)=5$ and $e(ab, C)=6$ holds. If $e(ab, C)=5$, then $e(cd, C)\geq4$. Without loss of generality, assume that $a$ is joined to $v_{1}, v_{2}, v_{3}$ and $b$ to $v_{1}, v_{2}$. Furthermore $e(v_{i}, cd)=2$ for at least one $i$, $1\leq i\leq 3$. Then $cdv_{i}c$ and one of $abv_{1}a$, $abv_{2}a$ together constitute two vertex-disjoint triangles in $G[V(M)\cup V(C)]$. If $e(ab, C)=6$, then $a$ and $b$ are joined to $v_{1}, v_{2}, v_{3}$, also $e(cd, C)\geq3$, so it may be assumed without loss of generality that $cv_{1}, cv_{2}\in E(G)$. Then $cv_{1}v_{2}c$ and $abv_{3}a$ are two vertex-disjoint triangles in $G[V(M)\cup V(C)]$. This proves the lemma. {\hfill$\Box$}

In 1963, Dirac \cite{Dirac} gave the minimum degree condition for the existence of vertex-disjoint triangles.

\begin{theorem}[Dirac \cite{Dirac}]\label{D}
For all positive integers $n$ and $k$ with $n\geq3k$, every graph on $n$ vertices with minimum degree at least $\frac{n+k}{2}$ contains $k$ vertex-disjoint triangles.
\end{theorem}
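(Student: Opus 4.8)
The plan is to argue by contradiction using a maximum family of vertex-disjoint triangles. Suppose $G$ satisfies the hypotheses but contains no $k$ vertex-disjoint triangles, and let $\mathcal{F}=\{T_{1},\dots,T_{m}\}$ be a collection of vertex-disjoint triangles in $G$ with $m$ as large as possible; by assumption $m\le k-1$. Write $H=G-\bigcup_{i}V(T_{i})$, so that $|V(H)|=n-3m\ge n-3(k-1)\ge 3$. By the maximality of $\mathcal{F}$, the graph $H$ is triangle-free, since any triangle inside $H$ could be added to $\mathcal{F}$. The whole argument will consist of producing, from $\mathcal{F}$ and a few vertices of $H$, a family of $m+1$ vertex-disjoint triangles, contradicting the choice of $m$.

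For a vertex $v\in V(H)$ set $d_{\mathcal{F}}(v)=\sum_{i=1}^{m}e(v,T_{i})$, the number of its neighbors lying on the triangles of $\mathcal{F}$. Since $d_{G}(v)=d_{H}(v)+d_{\mathcal{F}}(v)$ and $d_{G}(v)\ge\frac{n+k}{2}$, I get $d_{\mathcal{F}}(v)\ge\frac{n+k}{2}-d_{H}(v)$. Because $H$ is triangle-free it has at most $\lfloor |V(H)|^{2}/4\rfloor$ edges, so its minimum degree is at most $|V(H)|/2$; a vertex $u$ attaining it satisfies $d_{\mathcal{F}}(u)\ge\frac{n+k}{2}-\frac{|V(H)|}{2}=\frac{k+3m}{2}\ge 2m+1$, using $m\le k-1$. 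In particular $u$ has three neighbors on some $T_{i}$, i.e.\ $\{u\}\cup V(T_{i})$ spans a $K_{4}$. Averaging the same way over two or three low-$H$-degree vertices forces a single triangle $T_{i}$ to receive many edges from them, which is exactly the input required by Lemmas \ref{W1} and \ref{l4}.

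To finish I would distinguish cases according to the structure of $H$. If $H$ contains a path $uvw$ on three vertices (equivalently, some vertex has two neighbors in $H$), then $u,v,w$ together send $\frac{3(n+k)}{2}-\big(d_{H}(u)+d_{H}(v)+d_{H}(w)\big)$ edges into the $m$ triangles, so in the generic range some $T_{i}$ receives at least $7$ of them; Lemma \ref{W1}(ii) then yields two vertex-disjoint triangles inside $V(uvw)\cup V(T_{i})$, and replacing $T_{i}$ by this pair enlarges $\mathcal{F}$ to size $m+1$. If instead $H$ has maximum degree at most $1$ but contains two independent edges, these form a matching $M$ of size $2$, and the same averaging forces a triangle $T_{i}$ with $e(M,T_{i})\ge 9$, so Lemma \ref{l4} again produces the extra triangle. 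In either case we reach the desired contradiction.

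The hard part will be the remaining sparse regime, where $H$ is an independent set (or carries a single edge) and simultaneously $n$ is close to $3k$, so that $|V(H)|$ is near $3$ and the averaging barely clears the thresholds $7$ and $9$ demanded by the two lemmas. Here the $K_{4}$'s furnished by the apex vertices found above must be exploited directly: choosing two independent vertices of $H$ that apex a common triangle, or that apex two different triangles, lets one reshuffle the relevant triangle-vertices and the two $H$-vertices into an extra disjoint triangle. Verifying that the bound $\delta(G)\ge\frac{n+k}{2}$ is strong enough to guarantee such a configuration in this tight case, and checking the boundary $n=3k$ separately, is where the precise value of the threshold is used and is the main obstacle of the proof.
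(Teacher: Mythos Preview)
The paper does not prove this statement at all: Theorem~\ref{D} is quoted from Dirac~\cite{Dirac} and used as a black box, so there is no ``paper's own proof'' to compare against. Your proposal is therefore not a reconstruction of anything in the paper but an attempt at an independent proof of a classical result.

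That said, the sketch has a concrete error. In the second paragraph you claim that the minimum-$H$-degree vertex $u$ satisfies $d_{\mathcal{F}}(u)\ge\frac{k+3m}{2}\ge 2m+1$, ``using $m\le k-1$''. But $\frac{k+3m}{2}\ge 2m+1$ is equivalent to $m\le k-2$, so it fails precisely when $m=k-1$, which is the only case that matters (if $m\le k-2$ you have not yet ruled out enlarging $\mathcal{F}$ by other means, but the whole contradiction strategy is aimed at $m=k-1$). With $m=k-1$ the bound gives only $d_{\mathcal{F}}(u)\ge\frac{4k-3}{2}<2m+1$, so you cannot conclude that $u$ sees all three vertices of some $T_{i}$. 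The subsequent averaging for the path and matching cases inherits the same slack, and Lemma~\ref{W1}(ii) has an exceptional configuration that must be handled even when the threshold $7$ is met. Finally, you yourself flag the sparse regime as unresolved; that is exactly where Dirac's argument does real work, and the current outline does not indicate how to close it.
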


With more effort, we can prove the following two stronger structural results.

\begin{lemma}\label{11}
Let $G$ be a graph on $n$ vertices with $n\geq3k$. If $\delta(G)\geq\frac{n+k-1}{2}$ and $G$ contains no $kC_{3}$'s, then there exists a partition $V(G)=V_{1}\cup V_{2}\cup V_{3}$, so that $|V_{1}|=k-1$, $|V_{2}|=|V_{3}|=\frac{n-k+1}{2}$, any two vertices in different parts are adjacent, and there are no edges joining pairs of vertices in $V_{2}$ and so does $V_{3}$, unless $G$ is isomorphic to $G'$ in Figure $1$ when $n=10$.
\end{lemma}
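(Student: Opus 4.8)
The plan is to peel off $k-1$ vertex-disjoint triangles and analyse the triangle-free remainder together with its interaction with those triangles. Before starting I would make a parity reduction: if $n+k$ is even then $\frac{n+k-1}{2}$ is not an integer, so $\delta(G)\ge\frac{n+k-1}{2}$ already forces $\delta(G)\ge\frac{n+k}{2}$, and Theorem \ref{D} then produces a $kC_3$, contradicting the hypothesis. Hence I may assume $n+k$ is odd, so that $\frac{n+k-1}{2}$, $\frac{n-k+1}{2}$ and $m:=n-3k+3$ are all integers and $m$ is even; this is exactly the range in which the claimed partition can exist. Since $\delta(G)\ge\frac{n+k-1}{2}=\frac{n+(k-1)}{2}$ and $n\ge 3k>3(k-1)$, Theorem \ref{D} applied with $k-1$ in place of $k$ yields $k-1$ vertex-disjoint triangles. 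I would fix a family $\mathcal{T}=\{T_1,\dots,T_{k-1}\}$ of $k-1$ disjoint triangles chosen so that the remainder $R:=G-V(\mathcal{T})$, a graph on $m$ vertices, has as many edges as possible. Because $G$ has no $kC_3$, $R$ is triangle-free; moreover any configuration that replaces some $T_i$ by two disjoint triangles inside $V(T_i)\cup V(R)$ would create a $kC_3$, so none exists, and this is the hook for Lemmas \ref{W1} and \ref{l4}.

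The heart of the argument is to show $R\cong K_{m/2,m/2}$. Each $v\in R$ satisfies $d_R(v)=d_G(v)-\sum_i e(v,T_i)\ge\frac{n+k-1}{2}-3(k-1)=\frac{m}{2}-(k-1)$, so $R$ is triangle-free with fairly large minimum degree, and the deficiency $\frac m2-d_R(v)$ must be paid for by edges from $v$ to the triangles. Those edges are severely limited: for a path $uvw$ in $R$, Lemma \ref{W1}(i) forbids $e(u,T_i)+e(w,T_i)\ge5$ together with $e(v,T_i)\ge1$, and for a matching $\{uv,xy\}$ in $R$, Lemma \ref{l4} forces $e(\{uv,xy\},T_i)\le 8$; in each case a violation gives two disjoint triangles in $V(T_i)\cup V(R)$ and hence a $kC_3$. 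Summing these bounds over a large matching of $R$ controls $e(R,V(\mathcal{T}))$ from above, which feeds back through the degree identity to force $e(R)$ close to $\big\lfloor\frac{m^2}{4}\big\rfloor$; combined with triangle-freeness and the minimum-degree bound this pins $R$ down to the balanced complete bipartite graph $K_{m/2,m/2}$, with parts I will call $X$ and $Y$. The extremal choice of $\mathcal{T}$ is used here to rule out unbalanced or incomplete alternatives by exhibiting an edge-increasing swap whenever $R$ falls short.

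With $R=K_{m/2,m/2}$ in hand, I would extend the bipartition $(X,Y)$ to all of $G$ by examining each triangle $C=abc\in\mathcal{T}$. If a vertex of $C$, say $a$, has neighbours $x\in X$ and $y\in Y$, then $axy$ is a triangle, so to avoid a $kC_3$ the vertices $b,c$ can have essentially no "both-sides" neighbours in the remaining $K_{m/2-1,m/2-1}$; iterating this over all triangles shows that each $T_i$ contributes exactly one vertex to a set $V_1$ (the vertex seeing both sides) and one vertex to each of the two independent classes. Setting $V_2=X\cup\{\text{one vertex of each }T_i\}$ and $V_3=Y\cup\{\text{the other}\}$ gives $|V_1|=k-1$ and $|V_2|=|V_3|=\frac{m}{2}+(k-1)=\frac{n-k+1}{2}$; a last application of the minimum-degree bound upgrades "sees both sides" to "adjacent to all of $V_2\cup V_3$" for the $V_1$-vertices and rules out any edge inside $V_2$ or inside $V_3$.

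I expect the main obstacle to be precisely the second step and the vertex-distribution in the third: turning the per-triangle edge bounds from Lemmas \ref{W1} and \ref{l4} into a clean global bound on $e(R,V(\mathcal{T}))$, and then handling the boundary configurations where a triangle's edges to $R$ are nearly maximal. It is exactly in these tight cases — when $m$ is small, so the minimum-degree slack $k-1$ is comparable to $\frac m2$ — that a non-tripartite configuration can survive; tracking them down should reveal the sporadic graph $G'$ at $n=10$ (forced by $n\ge 3k$ and parity to $k=3$), which must be excluded by hand. The remaining verifications (sizes, completeness across parts, emptiness within parts) are then routine degree counts once the bipartite core and the triangle-vertex assignment are established.
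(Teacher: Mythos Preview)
Your high-level strategy --- peel off $k-1$ disjoint triangles, show the remainder $R$ is the balanced complete bipartite graph, then distribute the triangle-vertices across the two sides --- is the paper's strategy as well, and your parity reduction is a clean opening move. The execution of the middle step, however, is where your plan diverges from the paper and where it has a real gap.

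Your route to $R\cong K_{m/2,m/2}$ is to sum the Lemma~\ref{l4} bound over a large matching of $R$; this is attractive, and indeed if $R$ had a perfect matching the count would give $e(R)=m^{2}/4$ exactly and Mantel would finish. But you have not established that $R$ has any sizeable matching: the a priori bound $\delta(R)\ge \tfrac{m}{2}-(k-1)$ is already non-positive at $n=3k+1$, $k\ge3$, so $R$ may have isolated vertices, and the slogan ``an edge-increasing swap fixes this'' hides exactly the work the paper has to do. The paper instead chooses $T$ so that $G-T$ contains a longest path, then spends several swap claims merely to secure a single $2K_{2}$ in $G-T$. From that one $2K_{2}=M$ it proves a sharp local classification: $e(M,T_{i})=8$ for every $i$, and $G[V(M)\cup V(T_{i})]$ must contain one of three explicit labelled configurations $G_{1},G_{2},G_{3}$. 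This classification --- not a global edge count --- drives everything afterwards: it is fed into an odd-cycle argument to show $G-T$ is bipartite, and then (for $n\ge 3k+2$) into a further swap argument to force $e(v,T_{i})\le 2$ for every $v\in V(G-T)$, which instantly yields $\delta(R)\ge m/2$ and hence $R=K_{m/2,m/2}$. Your third step is also lighter than what is actually needed: the paper extends the bipartition not by a degree count but by showing that $G$ minus the ``apex'' vertices $\{t_{1_{1}},\dots,t_{(k-1)_{1}}\}$ is bipartite, via a three-case odd-cycle analysis. Finally, the exception $G'$ arises precisely in the residual case $n=3k+1$, where some $v\in R$ can have $e(v,T_{i})=3$; the paper tracks the resulting $G_{2}$-type configurations by hand and finds they survive only at $k=3$, i.e.\ $n=10$.
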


\begin{figure}[h] \label{fig:1}
\begin{center}
   \begin{tikzpicture}[scale=1.4,auto,swap]
   \tikzstyle{blackvertex}=[circle,draw=black]
   \node [label=left: ,blackvertex,scale=0.3] (a1) at (-2,1) {};
   \node [label=left: ,blackvertex,scale=0.3] (a2) at (-1.5,1.8) {};
   \node [label=left: ,blackvertex,scale=0.3] (a3) at (-1,1) {};
   \node [label=right: ,blackvertex,scale=0.3] (a4) at (-2,-0.5) {};
   \node [label=right: ,blackvertex,scale=0.3] (a5) at (-1.5,0.3) {};
   \node [label=right: ,blackvertex,scale=0.3] (a6) at (-1,-0.5) {};
   \node [label=right: ,blackvertex,scale=0.3] (a7) at (0,1.3) {};
   \node [label=right: ,blackvertex,scale=0.3] (a8) at (1.2,1.3) {};
   \node [label=right: ,blackvertex,scale=0.3] (a9) at (1.2,0) {};
   \node [label=right: ,blackvertex,scale=0.3] (a10) at (0,0) {};
   \draw [black,thick] (a1)--(a2);
   \draw [black,thick] (a1)--(a3);
   \draw [black,thick] (a1)--(a5);
   \draw [black,thick] (a1) .. controls (-1.3,0.5) .. (a6);
   \draw [black,thick] (a1) .. controls (-1.1,1.2) .. (a7);
   \draw [black,thick] (a1)--(a9);
   \draw [black,thick] (a2)--(a3);
   \draw [black,thick] (a2)--(a4);
   \draw [black,thick] (a2)--(a7);
   \draw [black,thick] (a2)--(a8);
   \draw [black,thick] (a2) .. controls (-0.8,1.2) .. (a10);
   \draw [black,thick] (a3) .. controls (-1.65,0.5) .. (a4);
   \draw [black,thick] (a3)--(a7);
   \draw [black,thick] (a3) .. controls (0,1.1) .. (a8);
   \draw [black,thick] (a3)--(a10);
   \draw [black,thick] (a4)--(a5);
   \draw [black,thick] (a4)--(a6);
   \draw [black,thick] (a4)--(a7);
   \draw [black,thick] (a4) .. controls (-1.2,-0.3) .. (a9);
   \draw [black,thick] (a5)--(a6);
   \draw [black,thick] (a5)--(a8);
   \draw [black,thick] (a5) .. controls (0,0.2) .. (a9);
   \draw [black,thick] (a5)--(a10);
   \draw [black,thick] (a6)--(a8);
   \draw [black,thick] (a6)--(a9);
   \draw [black,thick] (a6)--(a10);
   \draw [black,thick] (a7)--(a8);
   \draw [black,thick] (a7)--(a10);
   \draw [black,thick] (a8)--(a9);
   \draw [black,thick] (a9)--(a10);
\end{tikzpicture}

\small Figure 1. An exceptional graph $G'$ in Lemma \ref{11}.
\end{center}
\end{figure}

\begin{lemma}\label{111}
Let $G$ be a graph on $n$ vertices with $n\geq3k+1$. If $\delta(G)\geq\frac{n+k-2}{2}$ and $G$ contains a $(k-1)C_{3}$ but not $kC_{3}$'s, then there exists a subgraph $H\subseteq G$ such that $|V(H)|=3k-3$, $H$ contains a $(k-1)C_{3}$ and $G-H$ is a complete bipartite graph whose two parts are at least $2$ in size.
\end{lemma}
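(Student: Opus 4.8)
The plan is to produce the desired $H$ from a maximum packing of vertex-disjoint triangles. By hypothesis a maximum packing has exactly $k-1$ triangles $T_1,\dots,T_{k-1}$; set $F=\bigcup_{i=1}^{k-1}V(T_i)$ and $R=V(G)\setminus F$, so $|F|=3k-3$ and $|R|=n-3k+3\geq 4$. Because $G$ has no $kC_3$, the induced graph $G[R]$ is triangle-free: any triangle inside $R$ would extend $T_1,\dots,T_{k-1}$ to a $kC_3$. Hence it suffices to choose the packing so that $G[R]$ is complete bipartite with both parts of size at least $2$; then $H=G[F]$ has exactly $3k-3$ vertices, contains the $(k-1)C_3$ given by $T_1\cup\dots\cup T_{k-1}$, and $G-H=G[R]$ has the required form. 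So the whole lemma reduces to choosing the packing well.

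I would next separate cases by the parity of $n+k$. If $n+k$ is odd then $\frac{n+k-2}{2}$ is not an integer, so in fact $\delta(G)\geq\frac{n+k-1}{2}$ and Lemma~\ref{11} applies. Apart from the single exceptional graph $G'$ (which occurs only for $n=10$, forcing $k=3$, and is handled by inspection), Lemma~\ref{11} gives a partition $V(G)=V_1\cup V_2\cup V_3$ with $|V_1|=k-1$, $|V_2|=|V_3|=\frac{n-k+1}{2}$, all cross edges present, and $V_2,V_3$ independent. I then form the $k-1$ triangles by taking each vertex of $V_1$ with one (new) vertex from each of $V_2$ and $V_3$; this is possible since $\frac{n-k+1}{2}\geq k-1$. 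The leftover set $R$ is the union of the unused vertices of $V_2$ and $V_3$, which induces the complete bipartite graph $K_{a,a}$ with $a=\frac{n-3k+3}{2}\geq 2$ because $n\geq 3k+1$. This settles the odd case.

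If $n+k$ is even, then $\delta(G)\geq\frac{n+k}{2}$ would already force a $kC_3$ by Theorem~\ref{D}, so $\delta(G)=\frac{n+k-2}{2}$ exactly; this is the tight regime where the transfer lemmas do the work. Here I would fix the packing so as to maximize $e(G[R])$ and then read off structure from Lemmas~\ref{W1} and~\ref{l4}. The point is that a $kC_3$ must not appear, so: whenever $R$ contains a path $uvw$ and some $T_i$ has $e(v,T_i)\geq 1$, Lemma~\ref{W1}(i) forbids $e(u,T_i)+e(w,T_i)\geq 5$ (two triangles would arise in $G[\{u,v,w\}\cup V(T_i)]$ and, with the remaining $k-2$ triangles, give a $kC_3$); and whenever $R$ contains a matching of size $2$, Lemma~\ref{l4} forbids it from sending at least $9$ edges to any $T_i$. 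Feeding these restrictions on the $R$--$T_i$ adjacencies into $\delta(G)=\frac{n+k-2}{2}$ forces each non-isolated vertex of $R$ to have many neighbors inside $R$; an Andr\'asfai--Erd\H{o}s--S\'os type argument (exploiting that $G[R]$ is triangle-free) then yields that $G[R]$ is bipartite, after which a rotation---swapping a vertex of $R$ that has two neighbors on some $T_i$ into that triangle and invoking the maximality of $e(G[R])$---is used to promote ``bipartite'' to ``complete bipartite'' and to forbid a part of size $0$ or $1$.

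The step I expect to be hardest is this last promotion in the even case, namely excluding the bipartite-but-not-complete near-extremal configurations (an induced edge-plus-isolated-vertex in $R$, or a balanced blow-up of $C_5$), since at the threshold $\delta(G)=\frac{n+k-2}{2}$ all the crude degree counts are only barely consistent and leave little slack. Overcoming it should require the exact degenerate conclusion of Lemma~\ref{W1}(ii) (the case $N_{T_i}(u)=N_{T_i}(w)$ with $e(v,T_i)=3$), used together with the extremal choice of packing to rule out any swap that would increase $e(G[R])$. A secondary difficulty is the low range $3k+1\leq n\leq 5k-4$, where $R$ is small and may contain vertices that are isolated in $R$ (hence untouched by the path/matching lemmas); these will need a separate, more hands-on argument.
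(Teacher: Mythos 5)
Your reduction and your odd-parity case are fine: observing that $n+k$ odd upgrades the degree bound to $\delta(G)\geq\frac{n+k-1}{2}$, invoking Lemma \ref{11}, checking the exceptional graph $G'$ (whose complement of two triangles is a $K_{2,2}$), and then carving the $k-1$ triangles out of $V_1\cup V_2\cup V_3$ so that the leftover is $K_{\frac{n-3k+3}{2},\frac{n-3k+3}{2}}$ is a complete and correct argument, and it is genuinely different from (and slicker than) what the paper does, since the paper never splits by parity and re-runs a self-contained analysis instead of quoting Lemma \ref{11}.

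The even-parity case, however, is a plan rather than a proof, and that is exactly where all the difficulty of this lemma lives (when $n+k$ is even, Theorem \ref{D} forces $\delta(G)=\frac{n+k-2}{2}$, the tight regime). Three essential steps are asserted but not carried out. First, you choose the packing maximizing $e(G[R])$, but you never show this choice gives you anything to work with; the paper instead takes $T\in\mathcal{T}^{*}$ (maximizing the length of a longest path in $G-T$) and needs two separate claims just to produce an edge and then a $2K_2$ in $G-T$, including triangle-swapping arguments to handle vertices isolated in $R$ -- the very vertices you defer to "a separate, more hands-on argument." Second, your route to bipartiteness of $G[R]$ via an Andr\'asfai--Erd\H{o}s--S\'os-type argument does not apply as stated: a vertex of $R$ may have nearly all of its $\frac{n+k-2}{2}$ neighbors inside $T$, so $G[R]$ need not have large minimum degree relative to $|R|$, and triangle-freeness alone gives nothing. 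The paper's corresponding step (its Claim 4) is a delicate shortest-odd-cycle analysis that plays the cycle against each triangle $T_j$ using Lemma \ref{W1}, Lemma \ref{l4}, and edge-count bounds of the form $6\leq e(\{v_iv_{i+1},v_{i+2}v_{i+3}\},T_j)\leq 8$, and it crucially presupposes the $2K_2$ from the previous step. Third, the "rotation" promoting bipartite to complete bipartite with both parts of size at least $2$ -- which you yourself flag as the hardest step -- is simply missing; in the paper this is the closing argument with its case split $|A|=2$ versus $|A|\geq3$ and the sub-analysis of whether $G-T$ contains a $3K_2$, all driven by summing the degree bound over a $2K_2$. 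So as it stands the proposal proves the lemma only for $n+k$ odd; for $n+k$ even it correctly identifies the obstacles but does not overcome any of them.
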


The proofs of Lemmas \ref{11} and \ref{111} are more complicated, and we will put them in the last section. Next, we introduce some additional definitions and notations.

Let $v$ be a vertex in an edge-colored graph $G$. A color $c\in C(G)$ is {\em saturated} by $v$ if all the edges with the color $c$ are incident to $v$. The {\em color saturated degree} of $v$, denote by $d^{s}(v)$, is the number of colors saturated by $v$. Clearly, $d^{s}(v)=c(G)-c(G-v)$. A {\em representing subgraph} in an edge-coloring of $K_{n}$ is a spanning subgraph containing exactly one edge of each color.

\section{Proofs of the theorems}\label{se3}
\textbf{Proof of Theorem \ref{lb}.}
We prove the theorem by giving two colorings of $K_{n}$. For the first coloring, we choose $K_{3k-1}$ and color all its edges with distinct colors. Only one color is added for each vertex in the remaining vertices added. This is an exact $\big(\binom{3k-1}{2}+n-3k+1\big)$-coloring with no rainbow $kC_{3}$'s (see Figure 2 (a)).

For the second coloring, we first choose $K_{k-2}\vee K_{\lfloor\frac{n-k+2}{2}\rfloor, \lceil\frac{n-k+2}{2}\rceil}$ and color all its edges with distinct colors, where $K_{k-2}\vee K_{\lfloor\frac{n-k+2}{2}\rfloor, \lceil\frac{n-k+2}{2}\rceil}$ is a graph obtained from the vertex-disjoint union of $K_{k-2}$ and $K_{\lfloor\frac{n-k+2}{2}\rfloor, \lceil\frac{n-k+2}{2}\rceil}$ by adding edges joining every vertex of $K_{k-2}$ to every vertex of $K_{\lfloor\frac{n-k+2}{2}\rfloor, \lceil\frac{n-k+2}{2}\rceil}$. Next, we put one extra color to all remaining edges in $K_{n}$. This is an exact $\big(\left\lfloor\frac{(n-k+2)^{2}}{4}\right\rfloor+(k-2)(n-k+2)+\binom{k-2}{2}+1\big)$-coloring with no rainbow $kC_{3}$'s (see Figure 2 (b)).

\begin{center}
\begin{tikzpicture}
\path (0cm,0cm) coordinate(O);
\path ++(-16cm,0.5cm) coordinate (P1) ++(1cm,0cm) coordinate (P2);
\foreach \i in {2}{
\draw (P\i) ellipse(0.75cm and 1.4cm);
\fill[gray!35] (P\i) ellipse (0.75cm and 1.4cm);
}
\tikzstyle{blackvertex}=[circle,draw=black]
	\node [label=below: $v_{1}$,blackvertex,scale=0.4] (a1) at (-13.25,0.3) {};
    \node [label=below: $v_{2}$,blackvertex,scale=0.4] (a2) at (-12.25,0.3) {};
    \node [label=below: $v_{n-3k}$,blackvertex,scale=0.4] (a3) at (-10.8,0.3) {};
    \node [label=below: $v_{n-3k+1}$,blackvertex,scale=0.4] (a4) at (-9.5,0.3) {};
    \node [label=below: $\textbf{\ldots}$] (a5) at (-11.5,0.6) {};
    \node [label=below: rb] (a6) at (-15,1.25) {};
    \node [label=below: $K_{3k-1}$] (a7) at (-15,0.75) {};
    \node [label=below: (a) The first coloring of $K_{n}$] (a8) at (-12.4,-2.3) {};
    \draw [red,line width=2.5pt] (a1)--(-14.25,0.3);
    \draw [blue,line width=2.5pt] (a2)--(a1);
    \draw [blue,line width=2.5pt] (a2) .. controls (-13.25,0.8) .. (-14.25,0.5);
    \draw [purple,line width=2.5pt] (a3) .. controls (-11.5,0.6) .. (a2);
    \draw [purple,line width=2.5pt] (a3) .. controls (-11.7,0.9) .. (a1);
	\draw [purple,line width=2.5pt] (a3) .. controls (-12.3,1.4) .. (-14.26,0.7);
    \draw [green,line width=2.5pt] (a4)--(a3);
    \draw [green,line width=2.5pt] (a4) .. controls (-11,1.1) .. (a2);
    \draw [green,line width=2.5pt] (a4) .. controls (-11.4,1.6) .. (a1);
    \draw [green,line width=2.5pt] (a4) .. controls (-11.5,2) .. (-14.27,0.9);

\path (0cm,0cm) coordinate(O);
\path ++(-8.8cm,2cm) coordinate (P1) ++(3cm,0cm) coordinate (P2);
\foreach \i in {2}{
\draw (P\i) ellipse(0.75cm and 1.4cm);
\fill[yellow!15] (P\i) ellipse (0.75cm and 1.4cm);
}
\tikzstyle{blackvertex}=[circle,draw=black]
	\node [label=left: $x_{1}$,blackvertex,scale=0.3] (b1) at (-5.65,3) {};
    \node [label=left: $x_{2}$,blackvertex,scale=0.3] (b2) at (-5.65,2.4) {};
    \node [label=left: $x_{s}$,blackvertex,scale=0.3] (b3) at (-5.65,1) {};
    \node [label=below: $\textbf{\vdots}$] (b4) at (-5.65,2.35) {};
\path (0cm,0cm) coordinate(O);
\path ++(-5.2cm,2cm) coordinate (P1) ++(3cm,0cm) coordinate (P2);
\foreach \i in {2}{
\draw (P\i) ellipse(0.75cm and 1.4cm);
\fill[yellow!15] (P\i) ellipse (0.75cm and 1.4cm);
}
\tikzstyle{blackvertex}=[circle,draw=black]
	\node [label=right: $y_{1}$,blackvertex,scale=0.3] (b5) at (-2.25,3) {};
    \node [label=right: $y_{2}$,blackvertex,scale=0.3] (b6) at (-2.25,2.4) {};
    \node [label=right: $y_{t}$,blackvertex,scale=0.3] (b7) at (-2.25,1) {};
    \node [label=below: $\textbf{\vdots}$] (b8) at (-2.25,2.35) {};
    \draw [red,thick] (b1)--(b5);
    \draw [blue,thick] (b1)--(b6);
    \draw [green,thick] (b1)--(b7);
    \draw [purple,thick] (b2)--(b5);
    \draw [brown,thick] (b2)--(b6);
    \draw [cyan,thick] (b2)--(b7);
    \draw [orange,thick] (b3)--(b5);
	\draw [pink,thick] (b3)--(b6);
    \draw [gray,thick] (b3)--(b7);
\node (1) at (-5.6,0.7) [label=below:{$s=\left\lfloor\frac{n-k+2}{2}\right\rfloor$}]{};
\node (2) at (-2.2,0.7) [label=below:{$t=\left\lceil\frac{n-k+2}{2}\right\rceil$}]{};

\path (0cm,0cm) coordinate(O);
\path ++(-7cm,-1.7cm) coordinate (P1) ++(3cm,0cm) coordinate (P2);
\foreach \i in {2}{
\draw (P\i) ellipse(1.3cm and 0.6cm);
\fill[gray!35] (P\i) ellipse (1.3cm and 0.6cm);
}
\tikzstyle{blackvertex}=[circle,draw=black]
    \node [label=below: rb $K_{k-2}$] (c1) at (-4,-1.25) {};
    \node [label=below: rb] (c2) at (-3.5,-0.2) {};
    \node [label=below: (b) The second coloring of $K_{n}$] (c3) at (-4,-2.3) {};
    \draw [black,thick] (-6.8,3.7)--(-1.1,3.7);
    \draw [black,thick] (-6.8,-0.2)--(-1.1,-0.2);
    \draw [black,thick] (-6.8,3.7)--(-6.8,-0.2);
    \draw [black,thick] (-1.1,3.7)--(-1.1,-0.2);
    \draw [red!50,line width=3pt] (-4.4,-1.13)--(-4.4,-0.2);
    \draw [orange!50,line width=3pt] (-4.3,-1.1)--(-4.3,-0.2);
    \draw [yellow!50,line width=3pt] (-4.2,-1.1)--(-4.2,-0.2);
    \draw [green!50,line width=3pt] (-4.1,-1.1)--(-4.1,-0.2);
    \draw [cyan!50,line width=3pt] (-4,-1.1)--(-4,-0.2);
    \draw [blue!50,line width=3pt] (-3.9,-1.1)--(-3.9,-0.2);
    \draw [purple!50,line width=3pt] (-3.8,-1.1)--(-3.8,-0.2);
\end{tikzpicture}

\small Figure 2. Two colorings of $K_{n}$ in the proof of Theorem \ref{lb}.
\end{center}

The proof is complete. {\hfill$\Box$}

\noindent\textbf{Proof of Theorem \ref{5}.}
The lower bound is due to Theorem \ref{lb}. In proving the upper bound, we color the edges of $K_{3k}$ with $\binom{3k-1}{2}+2$ colors and prove that a rainbow $kC_{3}$ can be found. Let $G$ be a representing subgraph. Then $e(G)=\binom{3k-1}{2}+2$, which implies that $e(\overline{G})=\binom{3k}{2}-\big(\binom{3k-1}{2}+2\big)=3k-3$. If $k\geq 4$, then by Lemma \ref{3k}, $G$ contains a $kC_{3}$ that form a rainbow $kC_{3}$ in $K_{3k}$. So we next consider the cases of $k=2$ and $k=3$.

\setcounter{case}{0}
\begin{case}
$k=2$.
\end{case}

In this case, we have $v(\overline{G})=6$ and $e(\overline{G})=3$. If $\Delta(\overline{G})=1$ or $\Delta(\overline{G})=2$ but $\overline{G}[E(\overline{G})]\neq C_{3}$ or $\Delta(\overline{G})=3$, then clearly $\overline{G}$ is a $2$-partite graph with parts of size $3$ which implies that $G$ contains a $2C_{3}$ that form a rainbow $2C_{3}$ in $K_{6}$. If $\overline{G}[E(\overline{G})]=C_{3}$, then $G$ is obtained from $K_{3,3}$ plus three edges in one class that form the rainbow spanning subgraph in $K_{6}$ (see Figure 3). Now $\{x_{1}y_{1}y_{2}x_{1}, y_{3}x_{2}x_{3}y_{3}\}$ or $\{x_{2}y_{1}y_{2}x_{2}, y_{3}x_{1}x_{3}y_{3}\}$ or $\{x_{3}y_{1}y_{2}x_{3}, y_{3}x_{1}x_{2}y_{3}\}$ is a rainbow $2C_{3}$ in $K_{6}$.

\begin{figure}[h] \label{fig:1}
\begin{center}
   \begin{tikzpicture}[scale=1.4,auto,swap]
   \tikzstyle{blackvertex}=[circle,draw=black]
   \node [label=left: $x_1$,blackvertex,scale=0.3] (a1) at (0,1) {};
   \node [label=left: $x_2$,blackvertex,scale=0.3] (a2) at (0,0) {};
   \node [label=left: $x_3$,blackvertex,scale=0.3] (a3) at (0,-1) {};
   \node [label=right: $y_1$,blackvertex,scale=0.3] (a4) at (2,1) {};
   \node [label=right: $y_2$,blackvertex,scale=0.3] (a5) at (2,0) {};
   \node [label=right: $y_3$,blackvertex,scale=0.3] (a6) at (2,-1) {};
   \node [label=below: (a) $G$] (a7) at (1,-1.1) {};
   \draw [black,thick] (a1)--(a4);
   \draw [black,thick] (a1)--(a5);
   \draw [black,thick] (a1)--(a6);
   \draw [black,thick] (a2)--(a4);
   \draw [black,thick] (a2)--(a5);
   \draw [black,thick] (a2)--(a6);
   \draw [black,thick] (a3)--(a4);
   \draw [black,thick] (a3)--(a5);
   \draw [black,thick] (a3)--(a6);
   \draw [black,thick] (a1)--(a2);
   \draw [black,thick] (a2)--(a3);
   \draw [black,thick] (a1) .. controls (-0.8,0) .. (a3);

   \node [label=left: $x_1$,blackvertex,scale=0.3] (a1') at (4.5,1) {};
   \node [label=left: $x_2$,blackvertex,scale=0.3] (a2') at (4.5,0) {};
   \node [label=left: $x_3$,blackvertex,scale=0.3] (a3') at (4.5,-1) {};
   \node [label=right: $y_1$,blackvertex,scale=0.3] (a4') at (6.5,1) {};
   \node [label=right: $y_2$,blackvertex,scale=0.3] (a5') at (6.5,0) {};
   \node [label=right: $y_3$,blackvertex,scale=0.3] (a6') at (6.5,-1) {};
   \node [label=below: (b) $K_{6}$] (a7') at (5.5,-1.1) {};
   \draw [blue,thick] (a1')--(a4');
   \draw [red,thick] (a1')--(a5');
   \draw [green,thick] (a1')--(a6');
   \draw [purple,thick] (a2')--(a4');
   \draw [orange,thick] (a2')--(a5');
   \draw [cyan,thick] (a2')--(a6');
   \draw [yellow,thick] (a3')--(a4');
   \draw [pink,thick] (a3')--(a5');
   \draw [gray,thick] (a3')--(a6');
   \draw [blue!40,thick] (a1')--(a2');
   \draw [green!40,thick] (a2')--(a3');
   \draw [orange!40,thick] (a1') .. controls (3.7,0) .. (a3');
   \draw [dotted,thick] (a4')--(a5');
   \draw [dotted,thick] (a5')--(a6');
   \draw [dotted,thick] (a4') .. controls (7.2,0) .. (a6');
\end{tikzpicture}

\small Figure 3. $G$ and $K_{6}$ in Case 1.
\end{center}
\end{figure}

\begin{case}
$k=3$.
\end{case}

In this case, we have $v(\overline{G})=9$ and $e(\overline{G})=6$. If $\Delta(\overline{G})\leq2$, then by Theorem \ref{HS}, $\overline{G}$ is a $3$-partite graph with parts of size $3$. This implies that $G$ contains a $3C_{3}$ that form a rainbow $3C_{3}$ in $K_{9}$. So we assume that there exists a vertex, say $u$, such that $d_{\overline{G}}(u)\geq3$. Since $\overline{G}$ has $9$ vertices and $6$ edges, the number of components of $\overline{G}$ is at least $3$. We know that there is an independent set $A$ of vertices such that $u\in A$ and $|A|=3$. Let $\overline{G}'$ be the graph obtained from $\overline{G}$ by removing the vertices in $A$. If $d_{\overline{G}}(u)\geq4$, then $e(\overline{G}')\leq6-4=2$, which implies that $\overline{G}'$ is a $2$-partite graph with parts of size $3$. Thus $\overline{G}$ is a $3$-partite graph with parts of size $3$. Clearly, we can find a $3C_{3}$ in $G$ that form a rainbow $3C_{3}$ in $K_{9}$. So we assume that $d_{\overline{G}}(u)=3$. Then $e(\overline{G}')\leq6-3=3$. Similar to the Case 1, when $\overline{G}'[E(\overline{G}')]\neq C_{3}$, we know that $\overline{G}'$ is a $2$-partite graph with parts of size $3$. Thus $\overline{G}$ is a $3$-partite graph with parts of size $3$, which implies that $G$ contains a $3C_{3}$ that form a rainbow $3C_{3}$ in $K_{9}$. If $\overline{G}'[E(\overline{G}')]=C_{3}$, according to the value of $|N_{\overline{G}'}(u)\cap V(\overline{G}'[E(\overline{G}')])|$, we distinguish the following four cases.

(1)~$|N_{\overline{G}'}(u)\cap V(\overline{G}'[E(\overline{G}')])|=0$, see Figure 4 (a);

(2)~$|N_{\overline{G}'}(u)\cap V(\overline{G}'[E(\overline{G}')])|=1$, see Figure 4 (b);

(3)~$|N_{\overline{G}'}(u)\cap V(\overline{G}'[E(\overline{G}')])|=2$, see Figure 4 (c);

(4)~$|N_{\overline{G}'}(u)\cap V(\overline{G}'[E(\overline{G}')])|=3$, see Figure 4 (d).

We use the solid lines to denote the edges of $G$ and the dotted lines to denote the edges of $\overline{G}$ in Figure 3. Clearly, in Cases (1), (2) and (3), we can find a $3C_{3}$ in $G$ that form a rainbow $3C_{3}$ in $K_{9}$. The graph $G$ in Case (4) form the rainbow spanning subgraph in the corresponding $K_{9}$ and we can still find a rainbow $3C_{3}$ in this $K_{9}$ using $\{u'uvu', v'u''v''v', ww'w''w\}$ or $\{u''uvu'', v'u'w''v', wv''w'w\}$ or $\{w'uvw', v'v''w''v', wu'u''w\}$.

\begin{figure}[h] \label{fig:1}
\begin{center}
   \begin{tikzpicture}[scale=1.4,auto,swap]
   \tikzstyle{blackvertex}=[circle,draw=black]
   \node [label=left: $u$,blackvertex,scale=0.3] (a1) at (0,1) {};
   \node [label=left: $u'$,blackvertex,scale=0.3] (a2) at (0,0) {};
   \node [label=left: $u''$,blackvertex,scale=0.3] (a3) at (0,-1) {};
   \node [label=right: $v$,blackvertex,scale=0.3] (a4) at (1,1) {};
   \node [label=right: $v'$,blackvertex,scale=0.3] (a5) at (1,0) {};
   \node [label=right: $v''$,blackvertex,scale=0.3] (a6) at (1,-1) {};
   \node [label=right: $w$,blackvertex,scale=0.3] (a7) at (2,1) {};
   \node [label=right: $w'$,blackvertex,scale=0.3] (a8) at (2,0) {};
   \node [label=right: $w''$,blackvertex,scale=0.3] (a9) at (2,-1) {};
   \node [label=below: (a) The graph in Case 2 (1)] (a10) at (1,-1.1) {};
   \draw [dotted,thick] (a1)--(a6);
   \draw [dotted,thick] (a1)--(a8);
   \draw [dotted,thick] (a1) .. controls (1,-0.4) .. (a9);
   \draw [dashdotted,thick] (a4)--(a5);
   \draw [dashdotted,thick] (a4)--(a7);
   \draw [dashdotted,thick] (a5)--(a7);
   \draw [black,thick] (a1)--(a2);
   \draw [black,thick] (a1)--(a4);
   \draw [black,thick] (a2)--(a4);
   \draw [black,thick] (a3)--(a5);
   \draw [black,thick] (a3)--(a6);
   \draw [black,thick] (a5)--(a6);
   \draw [black,thick] (a7)--(a8);
   \draw [black,thick] (a8)--(a9);
   \draw [black,thick] (a7) .. controls (2.8,0) .. (a9);

   \node [label=left: $u$,blackvertex,scale=0.3] (a1) at (4.5,1) {};
   \node [label=left: $u'$,blackvertex,scale=0.3] (a2) at (4.5,0) {};
   \node [label=left: $u''$,blackvertex,scale=0.3] (a3) at (4.5,-1) {};
   \node [label=right: $v$,blackvertex,scale=0.3] (a4) at (5.5,1) {};
   \node [label=right: $v'$,blackvertex,scale=0.3] (a5) at (5.5,0) {};
   \node [label=right: $v''$,blackvertex,scale=0.3] (a6) at (5.5,-1) {};
   \node [label=right: $w$,blackvertex,scale=0.3] (a7) at (6.5,1) {};
   \node [label=right: $w'$,blackvertex,scale=0.3] (a8) at (6.5,0) {};
   \node [label=right: $w''$,blackvertex,scale=0.3] (a9) at (6.5,-1) {};
   \node [label=below: (b) The graph in Case 2 (2)] (a10) at (5.5,-1.1) {};
   \draw [dotted,thick] (a1)--(a4);
   \draw [dotted,thick] (a1)--(a6);
   \draw [dotted,thick] (a1)--(a8);
   \draw [dashdotted,thick] (a4)--(a5);
   \draw [dashdotted,thick] (a4)--(a7);
   \draw [dashdotted,thick] (a5)--(a7);
   \draw [black,thick] (a1)--(a2);
   \draw [black,thick] (a2)--(a7);
   \draw [black,thick] (a1) .. controls (5.5,1.4) .. (a7);
   \draw [black,thick] (a3)--(a5);
   \draw [black,thick] (a3)--(a6);
   \draw [black,thick] (a5)--(a6);
   \draw [black,thick] (a4)--(a8);
   \draw [black,thick] (a4)--(a9);
   \draw [black,thick] (a8)--(a9);

   \node [label=left: $u$,blackvertex,scale=0.3] (a1) at (0,-2.5) {};
   \node [label=left: $u'$,blackvertex,scale=0.3] (a2) at (0,-3.5) {};
   \node [label=left: $u''$,blackvertex,scale=0.3] (a3) at (0,-4.5) {};
   \node [label=right: $v$,blackvertex,scale=0.3] (a4) at (1,-2.5) {};
   \node [label=right: $v'$,blackvertex,scale=0.3] (a5) at (1,-3.5) {};
   \node [label=right: $v''$,blackvertex,scale=0.3] (a6) at (1,-4.5) {};
   \node [label=right: $w$,blackvertex,scale=0.3] (a7) at (2,-2.5) {};
   \node [label=right: $w'$,blackvertex,scale=0.3] (a8) at (2,-3.5) {};
   \node [label=right: $w''$,blackvertex,scale=0.3] (a9) at (2,-4.5) {};
   \node [label=below: (c) The graph in Case 2 (3)] (a10) at (1,-5.0) {};
   \draw [dotted,thick] (a1)--(a4);
   \draw [dotted,thick] (a1)--(a5);
   \draw [dotted,thick] (a1)--(a6);
   \draw [dashdotted,thick] (a4)--(a5);
   \draw [dashdotted,thick] (a4)--(a7);
   \draw [dashdotted,thick] (a5)--(a7);
   \draw [black,thick] (a1)--(a8);
   \draw [black,thick] (a7)--(a8);
   \draw [black,thick] (a1) .. controls (1,-2.1) .. (a7);
   \draw [black,thick] (a2)--(a4);
   \draw [black,thick] (a2)--(a3);
   \draw [black,thick] (a3)--(a4);
   \draw [black,thick] (a5)--(a6);
   \draw [black,thick] (a6)--(a9);
   \draw [black,thick] (a5)--(a9);

   \node [label=left: $u$,blackvertex,scale=0.3] (a1) at (4.5,-2.5) {};
   \node [label=left: $u'$,blackvertex,scale=0.3] (a2) at (4.5,-3.5) {};
   \node [label=left: $u''$,blackvertex,scale=0.3] (a3) at (4.5,-4.5) {};
   \node [label=right: $v$,blackvertex,scale=0.3] (a4) at (5.5,-2.5) {};
   \node [label=right: $v'$,blackvertex,scale=0.3] (a5) at (5.5,-3.5) {};
   \node [label=right: $v''$,blackvertex,scale=0.3] (a6) at (5.5,-4.5) {};
   \node [label=right: $w$,blackvertex,scale=0.3] (a7) at (6.5,-2.5) {};
   \node [label=right: $w'$,blackvertex,scale=0.3] (a8) at (6.5,-3.5) {};
   \node [label=right: $w''$,blackvertex,scale=0.3] (a9) at (6.5,-4.5) {};
   \node [label=below: (d) The graph in Case 2 (4)] (a10) at (5.5,-5.0) {};
   \draw [dotted,thick] (a1)--(a4);
   \draw [dotted,thick] (a1)--(a5);
   \draw [dotted,thick] (a1) .. controls (5.5,-2.1) .. (a7);
   \draw [dashdotted,thick] (a4)--(a5);
   \draw [dashdotted,thick] (a4)--(a7);
   \draw [dashdotted,thick] (a5)--(a7);
   \draw [black,thick] (a1)--(a6);
   \draw [black,thick] (a1)--(a8);
   \draw [black,thick] (a1) .. controls (5.5,-3.9) .. (a9);
   \draw [black,thick] (a1)--(a2);
   \draw [black,thick] (a1) .. controls (3.7,-3.5) .. (a3);
   \draw [black,thick] (a2)--(a3);
   \draw [black,thick] (a2)--(a4);
   \draw [black,thick] (a2)--(a5);
   \draw [black,thick] (a2)--(a6);
   \draw [black,thick] (a2)--(a7);
   \draw [black,thick] (a2) .. controls (5.5,-3.9) .. (a8);
   \draw [black,thick] (a2)--(a9);
   \draw [black,thick] (a3)--(a4);
   \draw [black,thick] (a3)--(a5);
   \draw [black,thick] (a3)--(a6);
   \draw [black,thick] (a3) .. controls (5.5,-3.9) .. (a7);
   \draw [black,thick] (a3)--(a8);
   \draw [black,thick] (a3) .. controls (5.5,-4.9) .. (a9);
   \draw [black,thick] (a4) .. controls (5.1,-3.5) .. (a6);
   \draw [black,thick] (a4)--(a8);
   \draw [black,thick] (a4)--(a9);
   \draw [black,thick] (a5)--(a6);
   \draw [black,thick] (a5)--(a8);
   \draw [black,thick] (a5)--(a9);
   \draw [black,thick] (a6)--(a7);
   \draw [black,thick] (a6)--(a8);
   \draw [black,thick] (a6)--(a9);
   \draw [black,thick] (a7)--(a8);
   \draw [black,thick] (a8)--(a9);
   \draw [black,thick] (a7) .. controls (7.3,-3.5) .. (a9);
\end{tikzpicture}

\small Figure 4. The graphs in Case 2.
\end{center}
\end{figure}

The proof is complete. {\hfill$\Box$}

\noindent\textbf{Proof of Theorem \ref{13}.}
We will prove that every edge-coloring of $K_{n}$ with at least $\left\lfloor\frac{(n-k+2)^{2}}{4}\right\rfloor+(k-2)(n-k+2)+\binom{k-2}{2}+(k-1)^{2}-\frac{n-3k}{2}+2$ distinct colors (call this edge-colored graph $G$) contains a rainbow $kC_{3}$ by induction on $n$. When $n=3k$, the graph $G$ is edge-colored with at least $\binom{3k-1}{2}+2$ colors and we can find a rainbow $kC_{3}$ in $G$ by Theorem \ref{5}. So we assume that $n\geq3k+1$.

If there exists a vertex $u\in V(G)$ such that $d^{s}(u)\leq\left\lfloor\frac{n+k-3}{2}\right\rfloor$, then $c(G-u)=c(G)-d^{s}(u)\geq\left\lfloor\frac{(n-k+1)^{2}}{4}\right\rfloor+(k-2)(n-k+1)+\binom{k-2}{2}+(k-1)^{2}-\frac{n-3k-1}{2}+2$. By the induction hypothesis, we can find a rainbow $kC_{3}$ in $G-u$. So we assume that $d^{s}(v)\geq\left\lfloor\frac{n+k-1}{2}\right\rfloor$ for every $v\in V(G)$.

Let $H$ be a spanning subgraph of $G$ such that $E(v, H-v)$ contains exactly one edge of each color saturated by $v$ for every $v\in V(H)$. Then $H$ is rainbow and $d_{H}(v)\geq\left\lfloor\frac{n+k-1}{2}\right\rfloor\geq\frac{n+k-2}{2}$ for each $v\in V(H)$. By Lemmas \ref{11} and \ref{111}, one of the following two alternatives holds:
$(1)$~there exists a partition $V(H)=V_{1}\cup V_{2}\cup V_{3}$ such that $|V_{1}|=k-2$, $|V_{2}|=|V_{3}|=\frac{n-k+2}{2}$, any two vertices in different parts are adjacent, and there are no edges joining pairs of vertices in $V_{2}$ and so does $V_{3}$;
$(2)$~there exists a subgraph $H'\subseteq H$ such that $|V(H')|=3k-3$, $H'$ contains a $(k-1)C_{3}$ and $H-H'$ is a complete bipartite graph whose two parts are at least 2 in size.

If $(1)$ is the case then $$e(H)\leq\left\lfloor\frac{(n-k+2)^{2}}{4}\right\rfloor+(k-2)(n-k+2)+\binom{k-2}{2}.$$ This implies that $$c(G-E(H))=c(G)-e(H)\geq(k-1)^{2}-\frac{n-3k}{2}+2\geq(k-1)^{2}-\frac{2k^{2}-k+2-3k}{2}+2=2.$$ Hence there must be two new colors not in $C(H)$ appearing on $E(G[V_{2}])\cup E(G[V_{3}])$. Recall that $n\geq3k+1$. We have $$|V_{2}|-(k-2)=\frac{n-k+2}{2}-(k-2)\geq4$$
and
$$|V_{3}|-(k-2)=\frac{n-k+2}{2}-(k-2)\geq4.$$
So, we can first find a rainbow $(k-2)C_{3}$ in $H$ (say $T_{1}$), and then we can find a rainbow $2C_{3}$ in $G-V(T_{1})$ (say $T_{2}$) such that $T_{1}$ and $T_{2}$ constitute a rainbow $kC_{3}$ in $G$.

If $(2)$ is the case then it may be supposed without loss of generality that $(A, B)$ is a bipartition of $H-H'$. Hence $|A|\geq2$ and $|B|\geq2$. By the definition of $H$, either there exists an edge $e'\in E(G[A])\cup E(G[B])$ such that $C(e')\notin C(H)$ or for each edge $e\in E(G[A])\cup E(G[B])$, $C(e)$ is the color saturated by the ends of $e$. In both cases, we can find a rainbow $kC_{3}$ in $G$.

The proof is complete. {\hfill$\Box$}

\noindent\textbf{Proof of Theorem \ref{12}.}
The lower bound is due to Theorem \ref{lb}. For the upper bound, we will show that every edge-coloring of $K_{n}$ with at least  $\left\lfloor\frac{(n-k+2)^{2}}{4}\right\rfloor+(k-2)(n-k+2)+\binom{k-2}{2}+2$ distinct colors (call this edge-colored graph $G$) contains a rainbow $kC_{3}$ by induction on $n$.

When $n=2k^{2}-k+2$, we have $(k-1)^{2}-\frac{n-3k}{2}=0$. By Theorem \ref{13}, we know that $G$ contains a rainbow $kC_{3}$. If there exists a vertex $u\in V(G)$ such that $d^{s}(u)\leq\left\lfloor\frac{n+k-2}{2}\right\rfloor$, then $$c(G-u)=c(G)-d^{s}(u)\geq\left\lfloor\frac{(n-k+1)^{2}}{4}\right\rfloor+(k-2)(n-k+1)+\binom{k-2}{2}+2.$$ By the induction hypothesis, we know that $G-u$ contains a rainbow $kC_{3}$. So we assume that $d^{s}(v)\geq\left\lfloor\frac{n+k}{2}\right\rfloor\geq\frac{n+k-1}{2}$ for every $v\in V(G)$. Let $H$ be a spanning subgraph of $G$ such that $E(v, H-v)$ contains exactly one edge of each color saturated by $v$ for every $v\in V(H)$. Then $H$ is rainbow and $d_{H}(v)\geq\frac{n+k-1}{2}$ for each $v\in V(H)$. By Theorem \ref{D}, we see that $H$ contains a $(k-1)C_{3}$. Hence, there exists a subgraph $H'\subseteq H$ such that $|V(H')|=3k-3$, $H'$ contains a $(k-1)C_{3}$ and $H-H'$ is a complete bipartite graph whose two parts are at least 2 in size by Lemma \ref{111}. Let $(A, B)$ be a bipartition of $H-H'$. Then $|A|\geq2$ and $|B|\geq2$. By the definition of $H$, either there exists an edge $e'\in E(G[A])\cup E(G[B])$ such that $C(e')\notin C(H)$ or for each edge $e\in E(G[A])\cup E(G[B])$, $C(e)$ is the color saturated by the ends of $e$. In both cases, we can find a rainbow $kC_{3}$ in $G$.

The proof is complete. {\hfill$\Box$}

\section{Proofs of Lemmas \ref{11} and \ref{111}}\label{se4}
\textbf{Proof of Lemma \ref{11}.}
If $n=3k$, then $\delta(G)\geq\frac{n+k-1}{2}=\frac{4k-1}{2}$, i.e., $\delta(G)\geq2k$. By Theorem \ref{D}, we can find a $kC_{3}$ in $G$. Thus we may assume that $n\geq3k+1$.

Let $\mathcal{T}$ denote the set of all those subgraphs of $G$ which have $3k-3$ vertices and contain $k-1$ vertex-disjoint triangles and let $\mathcal{T}^{*}$ denote the set of those elements $T$ of $\mathcal{T}$ for which $G-T$ contains a path of maximal length. By Theorem \ref{D}, we know that $\mathcal{T}, \mathcal{T}^{*}\neq\emptyset$. Now, we proceed by proving the following.

\setcounter{claim}{0}
\begin{claim}\label{C11}
If $T\in\mathcal{T}$ and $uv$ is an edge of $G-T$, then $e(u, T)+e(v, T)\geq4k-4$.
\end{claim}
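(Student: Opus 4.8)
The plan is to prove the degree-sum lower bound $e(u,T)+e(v,T)\ge 4k-4$ for any edge $uv$ of $G-T$ by a contradiction/extremal argument that turns a deficit into a larger vertex-disjoint triangle packing, contradicting the assumption that $G$ contains no $kC_3$. The object $T$ consists of $k-1$ vertex-disjoint triangles $C_1,\dots,C_{k-1}$ on $3k-3$ vertices, and $uv$ is an edge lying outside $T$; together $u,v$ and the triangles give us a natural candidate configuration in which to look for a $k$-th triangle.

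**Main idea of the argument.** First I would record that since $G$ has no $kC_3$, the path $P=uv$ (or any short path using the edge $uv$) cannot be extended to a triangle disjoint from all of $T$, and more importantly the two endpoints $u,v$ together with the triangles cannot be rearranged into $k$ disjoint triangles. The key mechanism is Lemma \ref{W1}: for each triangle $C_i$, if the path $P=uv$ (viewed with a middle vertex, or better, if we attach $u,v$ as endpoints of a path through one vertex of some triangle) sends enough edges into $C_i$, then $G[V(P)\cup V(C_i)]$ already contains two vertex-disjoint triangles. Having two disjoint triangles on the six vertices $\{u,v\}\cup V(C_i)$, while the remaining $k-2$ triangles of $T$ stay untouched, would yield $kC_3$ — a contradiction. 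Concretely I would argue that for each $i$ we must have $e(u,C_i)+e(v,C_i)\le 4$; summing this bound over all $k-1$ triangles of $T$ gives $e(u,T)+e(v,T)\le 4(k-1)=4k-4$ in the wrong direction, so the real work is to show the bound cannot be strict, i.e. that a \emph{deficit} $e(u,C_i)+e(v,C_i)\le 3$ for some $i$ forces a contradiction with minimality or with the no-$kC_3$ hypothesis.

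**Carrying it out.** I would proceed per triangle $C_i=x_1x_2x_3$. Since $uv\in E(G)$, the triple $u,v$ plus any single vertex of $C_i$ to which both are joined would already form a triangle using one vertex of $C_i$; the issue is whether the remaining two vertices of $C_i$ still form a triangle with a spare vertex. The cleanest route is: suppose for contradiction $e(u,T)+e(v,T)\le 4k-5$. Then by averaging (pigeonhole over the $k-1$ triangles) there is a triangle $C_j$ with $e(u,C_j)+e(v,C_j)\le 3$, and correspondingly the remaining triangles carry the surplus. I would combine the minimum-degree hypothesis $\delta(G)\ge\frac{n+k-1}{2}$ — which forces $u$ and $v$ to have many neighbors, hence large total attachment to $T$ together with the path component $G-T$ — against the per-triangle cap coming from Lemma \ref{W1}(i) and (ii). The precise casework is exactly what Lemma \ref{W1} is designed to absorb: alternative (i) handles the situation $e(u,C_i)+e(w,C_i)\ge 5$ with the middle vertex joined to $C_i$, and alternative (ii) handles $e(P,C_i)\ge 7$, pinning down the rare extremal configuration $N_{C_i}(u)=N_{C_i}(w)$ with $e(v,C_i)=3$; these are the only ways a high-attachment triangle can \emph{fail} to produce two disjoint triangles, and I would check that even those extremal configurations can be rerouted using a spare vertex of $G-T$ (which exists since $n\ge 3k+1$ means $G-T$ has at least three vertices and contains a path) to build the missing triangle.

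**Expected main obstacle.** The hard part will be the bookkeeping that converts a global degree deficit into a usable \emph{local} deficit on a single triangle while simultaneously guaranteeing that the surplus edges elsewhere, combined with the extra vertices of $G-T$, actually assemble a $k$-th triangle disjoint from a repacking of $T$. In other words, the delicate point is not invoking Lemma \ref{W1} on one triangle in isolation but coordinating the swap: when we use $\{u,v\}\cup V(C_i)$ to form two triangles we consume $u$ and $v$, so any triangle we were hoping to build from leftover vertices of $G-T$ must avoid $u,v$ and use only the path structure guaranteed by $T\in\mathcal{T}^*$. I expect the argument to hinge on choosing $T$ (or the path in $G-T$) extremally so that the exceptional case of Lemma \ref{W1}(ii) either cannot occur or can be broken by a single edge of $G-T$, and verifying this exceptional-case exclusion is where the proof will need the most care.
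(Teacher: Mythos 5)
Your proposal misses the single observation that makes this claim a two-line counting argument, and the machinery you propose in its place cannot be made to work. The paper's proof is: since $T$ already contains $k-1$ vertex-disjoint triangles and $G$ has no $kC_{3}$, the graph $G-T$ is triangle-free; hence the adjacent vertices $u$ and $v$ have no common neighbor in $G-T$, so $N_{G-T}(u)$ and $N_{G-T}(v)$ are disjoint and $d_{G-T}(u)+d_{G-T}(v)\leq v(G-T)=n-3k+3$. Combining this with $d_{G}(u),d_{G}(v)\geq\frac{n+k-1}{2}$ gives $e(u,T)+e(v,T)\geq(n+k-1)-(n-3k+3)=4k-4$ directly. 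No per-triangle analysis, no Lemma \ref{W1}, and no extremal choice of $T$ is needed; in particular the claim holds for every $T\in\mathcal{T}$, not only for $T\in\mathcal{T}^{*}$, which matters because later claims apply it to modified packings.

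Your route has two concrete defects. First, you reduce the claim to showing $e(u,C_{i})+e(v,C_{i})\geq4$ for each triangle $C_{i}$ of $T$ (i.e.\ that a local deficit on a single triangle is impossible), but that statement is false: the bound $4k-4$ is a global one, and nothing prevents one triangle of $T$ from receiving $0$ edges from $\{u,v\}$ while another receives up to $6$. Second, your contradiction mechanism --- using Lemma \ref{W1} to produce two vertex-disjoint triangles inside $G[\{u,v\}\cup V(C_{i})]$ --- cannot fire: that vertex set has only five vertices, so it can never contain two vertex-disjoint triangles, and Lemma \ref{W1} in any case requires a path on three vertices rather than a single edge. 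The repacking and swapping arguments you sketch are indeed the paper's tools, but for the \emph{later} claims of this lemma (producing a $2K_{2}$ in $G-T$, and the structural analysis in Claim \ref{C14}); for the present claim the entire content is the triangle-freeness of $G-T$ together with the minimum-degree hypothesis, which your write-up gestures at (``large total attachment to $T$ together with the path component'') but never converts into the needed bound on $d_{G-T}(u)+d_{G-T}(v)$.
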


\begin{proof}
Since $G$ contains no $kC_{3}$'s, we see that $G-T$ contains no triangles, which implies that $d_{G-T}(u)+d_{G-T}(v)\leq n-3(k-1)=n-3k+3$. Hence $e(u, T)+e(v, T)\geq2\big(\frac{n+k-1}{2}\big)-(n-3k+3)=4k-4$.
\end{proof}

\begin{claim}\label{C12}
If $T\in\mathcal{T}^{*}$, then $G-T$ contains at least one edge.
\end{claim}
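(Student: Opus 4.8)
The plan is to argue by contradiction: assume $G-T$ has no edge and then exhibit a member $T'\in\mathcal{T}$ for which $G-T'$ contains an edge. Since $G-T$ being edgeless means its longest path is a single vertex, such a $T'$ makes $G-T'$ contain a path of length $1$, strictly longer, contradicting $T\in\mathcal{T}^{*}$. Write $W=V(G-T)$ and $m=|W|=n-3k+3\ge4$. Because $W$ is independent, every $w\in W$ satisfies $e(w,T)=d_{G}(w)\ge\frac{n+k-1}{2}$. Since a triangle of $T$ not complete to $w$ (i.e. not having all three vertices adjacent to $w$) supplies at most $2$ of these edges, we get $e(w,T)\le t(w)+2(k-1)$, where $t(w)$ counts the triangles complete to $w$; hence $t(w)\ge\frac{n+k-1}{2}-2(k-1)=\frac{m}{2}\ge2$.

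The engine is a swap. If $w$ is complete to a triangle $C$ and $c\in V(C)$, then replacing $C$ by the triangle on $\{w\}\cup(V(C)\setminus\{c\})$ produces $T'\in\mathcal{T}$ with $V(G-T')=(W\setminus\{w\})\cup\{c\}$; if the freed vertex $c$ has any neighbour in $W\setminus\{w\}$, we are done by the reduction above. So I would assume no swap ever creates an edge and extract structural consequences. For a triangle complete to some $w$, freeing each of its three vertices in turn forces every vertex of that triangle to have $w$ as its only neighbour in $W$, so such a triangle sends exactly $3$ edges to $W$ and is complete to a unique $w$. For a triangle not complete to any vertex of $W$ but having two vertices with a common neighbour $w\in W$, freeing the opposite vertex via $w$ shows that vertex has no neighbour in $W$, capping the edges from this triangle to $W$ by $2m$; a triangle with no such dominated pair trivially sends at most $m$ edges to $W$.

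Finally I would double count $e(W,T)$. The lower bound is $m\cdot\frac{n+k-1}{2}=2m(k-1)+\frac{m^{2}}{2}$. Letting $P$ be the number of triangles complete to some vertex of $W$, the first paragraph gives $P=\sum_{w}t(w)\ge\frac{m^{2}}{2}$, while the structural consequences give the upper bound $3P+2m(k-1-P)=2m(k-1)+P(3-2m)$. Comparing, $\frac{m^{2}}{2}\le P(3-2m)$; since $3-2m<0$, substituting $P\ge\frac{m^{2}}{2}$ yields $\frac{m^{2}}{2}\le\frac{m^{2}}{2}(3-2m)$, i.e. $1\le3-2m$, i.e. $m\le1$, contradicting $m\ge4$. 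I expect the main obstacle to be exactly this counting step: the naive estimate that a non-complete triangle may send up to $3m$ edges to $W$ is too weak to close the range of small $n$ with large $k$. The decisive refinement, forced by the no-swap assumption, is that the apex of a dominated non-complete triangle is isolated from $W$, which lowers the cap to $2m$ and lets the negative coefficient of $P$ drive the contradiction.
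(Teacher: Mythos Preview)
Your proof is correct, but it is far more elaborate than the paper's. The paper dispatches the claim with a single pigeonhole and one swap: assuming $G-T$ is edgeless, pick any two vertices $u,v\in W$; then $e(u,T)+e(v,T)\ge 2\cdot\frac{n+k-1}{2}=n+k-1\ge 4k>4(k-1)$, so some triangle $T_1=xyz$ receives at least five edges from $\{u,v\}$, say $u$ complete to $T_1$ and $v$ adjacent to $x,y$. Replacing $T_1$ by the triangle $vxy$ yields $H\in\mathcal{T}$ with $uz\in E(G-H)$, contradicting $T\in\mathcal{T}^*$. By contrast, you run a global double count over all of $W$ and all $k-1$ triangles, deducing from the ``no swap ever succeeds'' hypothesis that each triangle complete to some $w$ contributes exactly $3$ edges to $W$ while every other triangle contributes at most $2m$. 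That classification is sound, and your counting closes (indeed the final substitution of $P\ge\frac{m^2}{2}$ is unnecessary: already $\frac{m^2}{2}\le P(3-2m)\le 0$ for $m\ge2$ is a contradiction). The paper's approach buys brevity---two vertices and one swap suffice, with no need to classify triangle types---whereas your structural analysis of the $W$--$T$ interface, though overkill here, is closer in spirit to what the later claims in the lemma actually require.
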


\begin{proof}
Let $T_{1}, T_{2}, \ldots, T_{k-1}$ denote $k-1$ vertex-disjoint triangles contained in $T$. Recall that $n\geq3k+1$. We have $v(G-T)=n-3(k-1)\geq4$. If $u$ and $v$ are two isolated vertices in $G-T$, then $e(u, T)+e(v, T)\geq 2\big(\frac{n+k-1}{2}\big)=n+k-1\geq4k>4(k-1)$. So there exists a triangle in $T$, say $T_{1}$, such that $e(u, T_{1})+e(v, T_{1})\geq5$. Let $V(T_{1})=\{x, y, z\}$. It may be supposed without loss of generality that $u$ is joined to $x, y, z$ and $v$ to $x, y$. Let $H$ denote $G[\{x, y, v\}\cup V(T-T_{1})]$. Then $H\in \mathcal{T}$ and $uz\in G-H$. By the definition of $\mathcal{T}^{*}$, we know that $G-T$ contains at least one edge.
\end{proof}

\begin{claim}\label{C13}
There must exist a graph $T\in\mathcal{T}$ such that $G-T$ contains a $2K_{2}$.
\end{claim}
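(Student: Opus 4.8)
The plan is to argue by contradiction: suppose that for \emph{every} $T\in\mathcal{T}$ the graph $G-T$ contains no $2K_{2}$, and fix a $T\in\mathcal{T}^{*}$. Since $T$ is the union of $k-1$ vertex-disjoint triangles and $G$ has no $kC_{3}$, any triangle inside $G-T$ would yield a $kC_{3}$; hence $G-T$ is triangle-free. A triangle-free graph in which every two edges meet (i.e. with no $2K_{2}$) is a star together with isolated vertices, so $G-T$ is a star with some centre $w$. By Claim \ref{C12} it has at least one edge, hence at least one leaf, and since $n\geq3k+1$ we have $v(G-T)=n-3k+3\geq4$, so besides $w$ there are at least two further vertices. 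Each such non-central vertex, being a leaf or an isolated vertex, has at most one neighbour inside $G-T$.

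The second step is to exploit the degree hypothesis. If $s$ is any non-central vertex of the star then $d_{G-T}(s)\leq1$, so $e(s,T)\geq\delta(G)-1\geq\frac{n+k-3}{2}>2(k-1)$, where the last inequality uses $n\geq3k+1$. Since $T$ splits into $k-1$ triangles, $s$ cannot be adjacent to at most two vertices of each triangle (that would give at most $2(k-1)$ edges to $T$); therefore $s$ is adjacent to all three vertices of some triangle of $T$. I will say $s$ \emph{dominates} that triangle. Thus every non-central vertex of $G-T$ dominates at least one triangle of $T$.

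The third step is a re-shuffling argument. If a non-central vertex $s$ dominates a triangle $T_{i}=xyz$, then replacing $T_{i}$ by the triangle on $\{s,x,y\}$ gives a new $H\in\mathcal{T}$ (still $k-1$ triangles on $3k-3$ vertices) in which $z$ is released into the complement while $w$ and the remaining non-central vertices are left in place. I will choose $s$ and $T_{i}$ so that in $G-H$ the released vertex $z$ acquires a neighbour disjoint from a pre-existing edge. The finish then rests on one observation: since $G-H$ also satisfies the standing no-$2K_{2}$ assumption, any two edges of $G-H$ must share a vertex, so if $G-H$ has two edges they form a $P_{3}$. Consequently, producing two edges in $G-H$ either yields a $2K_{2}$ directly (contradicting the assumption) or yields a path of length $2$; the latter already contradicts $T\in\mathcal{T}^{*}$ whenever the original star $G-T$ is a single edge (longest path $1$). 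When $G-T$ is a star with several leaves I instead aim for two \emph{independent} edges in $G-H$, using that after moving a leaf into its dominated triangle the centre $w$ still retains another leaf (supplying one edge) while the released vertex, which has large degree into $G$, supplies a second one disjoint from it; Lemma \ref{W1} is available as a backup, since forcing two disjoint triangles inside a path-plus-triangle would even produce a $kC_{3}$.

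I expect the main difficulty to be the bookkeeping in this last step, concentrated in two degenerate configurations. The first is the single-edge star $uw$ with all other vertices isolated: here there is no second leaf to provide an edge after a single swap, and one must move two isolated vertices simultaneously (a double swap, in the spirit of Claim \ref{C12}) and argue, via the minimum-degree bound applied to the two \emph{released} triangle-vertices, that two edges must surface among the released and remaining vertices. The second is the possibility that all non-central vertices dominate the \emph{same} triangle of $T$, which again forces a double swap and a delicate count, since for small $n$ (roughly $3k+1\le n\le 5k-5$) the crude estimate that a released vertex keeps a neighbour in $G-H$ is only barely positive. Verifying that each swap genuinely stays inside $\mathcal{T}$ and strictly increases either the number of edges or the longest-path length of the complement is the technical heart of the claim.
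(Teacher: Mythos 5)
Your structural reduction is sound: triangle-freeness of $G-T$ follows from the absence of a $kC_3$, a triangle-free graph with no $2K_2$ is indeed a star plus isolated vertices, and the domination step ($e(s,T)>2(k-1)$ forces every non-central vertex $s$ to be complete to some triangle of $T$) is correct. The genuine gap sits exactly where you place "the technical heart": after swapping a dominating vertex $s$ into its triangle $T_i=xyz$ (so that $H$ contains $sxy$ and releases $z$), you need $z$ to have a neighbour in $G-H$ disjoint from a surviving star edge, and your justification --- that $z$ "has large degree into $G$" --- does not deliver this, because all of $z$'s neighbours may lie inside $H$, leaving $z$ isolated in $G-H$. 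Nothing in the single-vertex domination argument controls where the released vertex's neighbours are. Consequently the two degenerate configurations you flag (the single-edge star, and all non-central vertices dominating one common triangle) remain admitted open problems in your write-up ("a delicate count", "double swap"), so what you have is a plan rather than a proof.

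The paper closes precisely this gap by applying the minimum-degree bound to \emph{two or three} low-degree vertices of $G-T'$ at once, rather than to one. For instance, for two isolated vertices $u,v$ one gets $e(u,T')+e(v,T')>4(k-1)$, hence a single triangle $T_1=xyz$ receiving at least $5$ of these edges; then the vertex with $e(\cdot,T_1)\geq 2$ (say $v$, adjacent to $x,y$) is swapped in to form $xyv$, and the released vertex $z$ is \emph{automatically} adjacent to the other vertex $u$, since $u$ is complete to $T_1$. The swap and the new edge $uz$ are thus produced simultaneously, and a pre-existing edge disjoint from the swap (the edge $ab$ guaranteed by Claim \ref{C12}, or a star edge $v_0v_2$, $v_0v_3$) supplies the second edge of the $2K_2$; in the full-star case the same trick is run with three leaves and at least $7$ edges into a common triangle, using the dichotomy behind Lemma \ref{W1}. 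If you replace your single-vertex domination by this paired counting --- every two (or three) low-degree vertices of $G-T'$ send at least $5$ (resp.\ $7$) edges into a common triangle --- your swap scheme goes through in all cases, including both of your degenerate ones, and no iteration or potential-function argument is needed.
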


\begin{proof}
Let $T'$ be a graph in $\mathcal{T}^{*}$ and let $T_{1}, T_{2}, \ldots, T_{k-1}$ denote $k-1$ vertex-disjoint triangles contained in $T'$. By Claim \ref{C12}, we know that $G-T'$ contains at least one edge. Suppose that $ab$ is an edge in $G-T'$.

Recall that $n\geq3k+1$. We have $v(G-T')=n-3(k-1)\geq4$. If $G-T'$ contains more than one isolated vertex, let $u$ and $v$ be two isolated vertices in $G-T'$, then $e(u, T')+e(v, T')\geq 2(\frac{n+k-1}{2})=n+k-1\geq4k>4(k-1)$. So there exists a triangle in $T'$, say $T_{1}$, such that $e(u, T_{1})+e(v, T_{1})\geq5$. Let $V(T_{1})=\{x, y, z\}$. Without loss of generality, assume that $u$ is joined to $x, y, z$ and $v$ to $x, y$. Let $H$ denote $G[\{x, y, v\}\cup V(T'-T_{1})]$. Then $H\in \mathcal{T}$ and $\{ab, uz\}$ is a $2K_{2}$ in $G-H$. So next we will assume that $G-T'$ contains at most one isolated vertex. If $G-T'$ contains no $2K_{2}$'s, then one of the following two alternatives holds:
$(1)$ $V(G-T')=\{v_{0}, v_{1}, \ldots, v_{t-1}\}$ and $E(G-T')=\{v_{0}v_{1}, v_{0}v_{2}, \ldots, v_{0}v_{t-2}\}$;
$(2)$ $V(G-T')=\{v_{0}, v_{1}, \ldots, v_{t-1}\}$ and $E(G-T')=\{v_{0}v_{1}, v_{0}v_{2}, \ldots, v_{0}v_{t-1}\}$, where $t\geq4$ (since $v(G-T')\geq4$).

If $(1)$ is the case then $e(v_{1}, T')+e(v_{t-1}, T')\geq\frac{n+k-1}{2}-1+\frac{n+k-1}{2}=n+k-2\geq4k-1>4(k-1)$, so it may be assumed without loss of generality that $e(v_{1}, T_{1})+e(v_{t-1}, T_{1})\geq5$. Then either $e(v_{1}, T_{1})=2$ or $e(v_{1}, T_{1})=3$. If $e(v_{1}, T_{1})=2$, then it may be supposed without loss of generality that $v_{1}x, v_{1}y\in E(G)$, and $e(v_{t-1}, T_{1})=3$. Let $H$ denote $G[\{x, y, v_{1}\}\cup V(T'-T_{1})]$. Then $H\in\mathcal{T}$ and $\{zv_{t-1}, v_{0}v_{2}\}$ is a $2K_{2}$ in $G-H$. If $e(v_{1}, T_{1})=3$, then $e(v_{t-1}, T_{1})\geq2$ and it may be supposed without loss of generality that $v_{t-1}x, v_{t-1}y\in E(G)$. Let $H$ denote $G[\{x, y, v_{t-1}\}\cup V(T'-T_{1})]$. Then $H\in\mathcal{T}$ and $\{zv_{1}, v_{0}v_{2}\}$ is a $2K_{2}$ in $G-H$.

If $(2)$ is the case then $e(\{v_{1}, v_{2}, v_{3}\}, T')\geq3(\frac{n+k-1}{2}-1)\geq6k-3>6(k-1)$, so it may be assumed without loss of generality that $e(\{v_{1}, v_{2}, v_{3}\}, T_{1})\geq7$, and further that $v_{1}$ is joined to $x, y, z$ and $v_{2}$ to $x, y$. Let $H$ denote $G[\{x, y, v_{2}\}\cup V(T'-T_{1})]$. Then $H\in\mathcal{T}$ and $\{zv_{1}, v_{0}v_{3}\}$ is a $2K_{2}$ in $G-H$.
\end{proof}

\begin{claim}\label{C14}
If $T\in\mathcal{T}$ such that $G-T$ contains a $2K_{2}$, let $T_{1}, T_{2}, \ldots, T_{k-1}$ denote $k-1$ vertex-disjoint triangles contained in $T$ and let $M$ denote a $2K_{2}$ in $G-T$, then $e(M, T_{i})=8$ for every $1\leq i\leq k-1$. Suppose, furthermore, that $M=\{ab, cd\}$ and $T_{i}=t_{i_{1}}t_{i_{2}}t_{i_{3}}t_{i_{1}}$. Then $G[V(M)\cup V(T_{i})]$ contains a subgraph isomorphic to one of the following graphs

(${\romannumeral1}$)~$G_{1}$: $V(G_{1})=V(M)\cup V(T_{i})$ and $E(G_{1})=E(M)\cup E(T_{i})\cup\{t_{i_{1}}a, t_{i_{1}}b, t_{i_{1}}c, t_{i_{1}}d, t_{i_{2}}a, \\t_{i_{2}}c, t_{i_{3}}b, t_{i_{3}}d\}$;

(${\romannumeral2}$)~$G_{2}$: $V(G_{2})=V(M)\cup V(T_{i})$ and $E(G_{2})=E(M)\cup E(T_{i})\cup\{t_{i_{1}}a, t_{i_{1}}b, t_{i_{1}}c, t_{i_{2}}a, t_{i_{2}}b, \\t_{i_{2}}c, t_{i_{3}}a, t_{i_{3}}d\}$;

(${\romannumeral3}$)~$G_{3}$: $V(G_{3})=V(M)\cup V(T_{i})$ and $E(G_{3})=E(M)\cup E(T_{i})\cup\{t_{i_{1}}a, t_{i_{1}}b, t_{i_{1}}c, t_{i_{2}}a, t_{i_{2}}b, \\t_{i_{2}}d, t_{i_{3}}a, t_{i_{3}}b\}$.
\end{claim}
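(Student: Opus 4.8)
The plan is to prove the two assertions in turn: first the counting statement $e(M, T_i)=8$ for every $i$, and then the structural classification into $G_1,G_2,G_3$.

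For the counting I would combine a global lower bound with a local upper bound. Since $M=\{ab,cd\}$ is a $2K_2$ in $G-T$, both $ab$ and $cd$ are edges of $G-T$, so Claim \ref{C11} gives $e(a,T)+e(b,T)\geq 4k-4$ and $e(c,T)+e(d,T)\geq 4k-4$; adding these yields $e(M,T)\geq 8(k-1)$. Because $T$ consists of the $k-1$ vertex-disjoint triangles, $V(T)$ is the disjoint union of $V(T_1),\dots,V(T_{k-1})$, so $e(M,T)=\sum_{i=1}^{k-1}e(M,T_i)$. For the matching upper bound, observe that if $e(M,T_i)\geq 9$ for some $i$, then Lemma \ref{l4} produces two vertex-disjoint triangles inside $G[V(M)\cup V(T_i)]$; together with the $k-2$ triangles $T_j$ ($j\neq i$), which are vertex-disjoint from $V(M)\cup V(T_i)$, this would give a $kC_3$, contradicting the hypothesis on $G$. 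Hence $e(M,T_i)\leq 8$ for every $i$, and since the $k-1$ terms sum to at least $8(k-1)$, each must equal $8$.

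For the structure, the key point is that the very same contradiction forbids \emph{any} two vertex-disjoint triangles in $G[V(M)\cup V(T_i)]$: such a pair, combined with the other $k-2$ triangles, would again yield a $kC_3$. Writing $t_1,t_2,t_3$ for the vertices of $T_i$ and restricting to the spanning subgraph carrying only the edges $ab$, $cd$, the triangle $t_1t_2t_3$, and the eight cross edges (a subgraph, so the no-two-disjoint-triangles property is inherited), I would classify all $8$-edge bipartite patterns between $\{a,b,c,d\}$ and $\{t_1,t_2,t_3\}$ that create no two vertex-disjoint triangles, up to the natural symmetries: swapping $a\leftrightarrow b$, swapping $c\leftrightarrow d$, interchanging the matching edges $\{a,b\}\leftrightarrow\{c,d\}$, and permuting $t_1,t_2,t_3$. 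The entry point is the degree sequence $(d_1,d_2,d_3)$ of the $t_j$ into $\{a,b,c,d\}$; it sums to $8$ with each entry at most $4$, so up to ordering it is one of $(4,4,0)$, $(4,3,1)$, $(4,2,2)$, $(3,3,2)$. A short argument rules out the first two: a $t_j$ of degree $4$ forms a triangle with both $ab$ and $cd$, and in $(4,4,0)$ and $(4,3,1)$ a second vertex is forced to complete a triangle with $ab$ or with $cd$ disjoint from the first. The surviving sequences then have to be examined by hand, and I expect $(4,2,2)$ to force $G_1$ while $(3,3,2)$ splits into the two possibilities $G_2$ and $G_3$; finally one checks that $G_1,G_2,G_3$ are pairwise non-isomorphic by comparing their $M$-side degree multisets $\{2,2,2,2\}$-type data.

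The main obstacle is precisely this finite but delicate enumeration for $(4,2,2)$ and $(3,3,2)$. Since the seven vertices admit no triangle inside $\{a,b,c,d\}$ (only $ab$ and $cd$ are present there), any two vertex-disjoint triangles must use exactly six of the seven vertices, leaving one out, and each triangle must mix $M$-vertices with $T_i$-vertices: either $\{a,b\}$ or $\{c,d\}$ together with a common $T_i$-neighbor, or one $M$-vertex together with an edge of $T_i$. The care needed is to track which vertices lie in \emph{no} triangle at all, since if two of the seven vertices are triangle-free then six cannot be covered and no dangerous pair exists. I would use this ``triangle-free vertex'' bookkeeping rather than a blind split, both to keep the argument short and to avoid overlooking configurations such as $abt_3$ arising when $t_3$ happens to be adjacent to both $a$ and $b$. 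Matching the surviving patterns against the defining edge sets of $G_1$, $G_2$, $G_3$ then completes the proof.
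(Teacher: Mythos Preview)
Your proposal is correct and follows essentially the same route as the paper. The counting part is identical: Claim~\ref{C11} on each edge of $M$ gives $e(M,T)\ge 8(k-1)$, Lemma~\ref{l4} forces $e(M,T_i)\le 8$, and equality everywhere follows. For the structure, the paper's entry point is the observation that any two vertices of $T_i$ together send at most six edges to $M$ (otherwise one finds a $2C_3$ in $G[V(M)\cup V(T_i)]$), which is equivalent to your elimination of the degree sequences $(4,4,0)$ and $(4,3,1)$; the paper then splits on whether the top $T_i$-degree into $M$ is $4$ or $3$, landing on $G_1$ in the first case and $G_2,G_3$ in the second, exactly as you anticipate. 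One small remark: the final check that $G_1,G_2,G_3$ are pairwise non-isomorphic is unnecessary for the claim as stated (it only asserts that \emph{some} $G_j$ occurs as a subgraph), and the paper does not include it.
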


\begin{proof}
By Claim \ref{C11}, we have $e(M, T)\geq2(4k-4)=8k-8$. Furthermore, we see that $e(M, T)\leq8(k-1)=8k-8$; otherwise there exists a triangle in $T$, say $T_{1}$, such that $e(M, T_{1})\geq9$. By Lemma \ref{l4}, we know that $G[V(M)\cup V(T_{1})]$ contains two vertex-disjoint triangles, with $T_{2}, \ldots, T_{k-1}$ as $k$ vertex-disjoint triangles contained in $G$, a contradiction. Hence we have $e(M, T)=8k-8$. Again, by Lemma \ref{l4}, one can show that $e(M, T_{i})=8$ for every $1\leq i\leq k-1$.

Since $G$ contains no $kC_{3}$'s, for each $T_{i}\subseteq T$, we have $G[V(M)\cup V(T_{i})]$ contains no $2C_{3}$'s. Hence for each pair of vertices $t_{i_{x}}$ and $t_{i_{y}}$ of $T_{i}$ with $1\leq x<y\leq3$, $e(\{t_{i_{x}}, t_{i_{y}}\}, M)\leq6$; otherwise it may be assumed without loss of generality that $e(\{t_{i_{1}}, t_{i_{2}}\}, M)\geq7$, furthermore $t_{i_{1}}$ is joined to $a, b, c, d$ and $t_{i_{2}}$ to $a, b, c$. Then $\{abt_{i_{2}}a, cdt_{i_{1}}c\}$ is a $2C_{3}$ in $G[V(M)\cup V(T_{i})]$. Recall that $e(M, T_{i})=8$. We see that there exist two vertices in $T_{i}$, say $t_{i_{1}}$ and $t_{i_{2}}$, such that $e(\{t_{i_{1}}, t_{i_{2}}\}, M)\geq6$. Hence $e(\{t_{i_{1}}, t_{i_{2}}\}, M)=6$. It may be assumed without loss of generality that one of the alternatives $e(t_{i_{1}}, M)=4$ and $e(t_{i_{1}}, M)=3$ holds.

If $e(t_{i_{1}}, M)=4$, then $e(t_{i_{2}}, M)=2$ and $e(t_{i_{3}}, M)=2$. Since $G[V(M)\cup V(T_{i})]$ contains no $2C_{3}$'s, we have $N_{M}(t_{i_{2}})\cap N_{M}(t_{i_{3}})=\emptyset$, and $|N_{M}(t_{i_{2}})\cap V(e)|=1$ and $|N_{M}(t_{i_{3}})\cap V(e)|=1$ for each $e\in E(M)$, which implies that $G[V(M)\cup V(T_{i})]$ contains a subgraph isomorphic to $G_{1}$.

If $e(t_{i_{1}}, M)=3$, then $e(t_{i_{2}}, M)=3$ and $e(t_{i_{3}}, M)=2$. Without loss of generality, assume that $N_{M}(t_{i_{1}})=\{a, b, c\}$. This implies that $a, b\in N_{M}(t_{i_{2}})$; otherwise $\{abt_{i_{1}}a, cdt_{i_{2}}c\}$ is a $2C_{3}$ in $G[V(M)\cup V(T_{i})]$. If $N_{M}(t_{i_{2}})=\{a, b, c\}$, then $N_{M}(t_{i_{3}})\in\{\{a, d\}, \{b, d\}\}$, which implies that $G[V(M)\cup V(T_{i})]$ contains a subgraph isomorphic to $G_{2}$. If $N_{M}(t_{i_{2}})=\{a, b, d\}$, then $N_{M}(t_{i_{3}})=\{a, b\}$, which implies that $G[V(M)\cup V(T_{i})]$ contains a subgraph isomorphic to $G_{3}$.
\end{proof}

\begin{claim}\label{C15}
If $T\in\mathcal{T}$ such that $G-T$ contains a $2K_{2}$, then $G-T$ is a bipartite graph.
\end{claim}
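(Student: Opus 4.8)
The plan is to argue by contradiction. Write $R=G-T$. Since $T$ contains $k-1$ vertex-disjoint triangles and $G$ has no $kC_{3}$, the graph $R$ is triangle-free, so it suffices to rule out odd cycles. Suppose $R$ is not bipartite and let $C=c_{1}c_{2}\cdots c_{m}c_{1}$ be a shortest odd cycle of $R$; then $m\geq5$ and $C$ is chordless, since a chord would create a shorter odd cycle. I will reach a contradiction by producing two vertex-disjoint triangles inside $G[V(C)\cup V(T_{i})]$ for some triangle $T_{i}$ of $T$, which together with the remaining $k-2$ triangles of $T$ would give a $kC_{3}$ in $G$.

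First I would record two opposing estimates on the number of edges between $V(C)$ and $T$. For the lower bound, apply Claim \ref{C11} to each edge $c_{j}c_{j+1}$ of $C$ (these are edges of $R$) and sum around the cycle; since every vertex is counted twice, this gives $e(V(C), T)\geq 2m(k-1)$. For the upper bound, fix a triangle $T_{i}$ and note that any two vertex-disjoint edges of $C$ form a matching $M'$ of size $2$ in $R$ disjoint from $T_{i}$; if such an $M'$ had at least $9$ edges to $T_{i}$, then Lemma \ref{l4} would give a $2C_{3}$ in $G[V(M')\cup V(T_{i})]$ and hence a $kC_{3}$ in $G$, a contradiction. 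Thus every vertex-disjoint pair of cycle edges sends at most $8$ edges to $T_{i}$. Writing $s_{j}=e(\{c_{j},c_{j+1}\}, T_{i})$ and double-counting the inequalities $s_{j}+s_{l}\leq8$ over all $\frac{m(m-3)}{2}$ vertex-disjoint pairs $\{e_{j},e_{l}\}$ (each edge lying in $m-3$ of them) yields $e(V(C), T_{i})\leq 2m$. Summing over the $k-1$ triangles forces equality everywhere: $e(V(C), T_{i})=2m$ for each $i$, and $s_{j}+s_{l}=8$ for every disjoint pair.

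The relation $s_{j}+s_{l}=8$ lives on the \emph{disjointness graph} whose vertices are the edges of $C$ and whose edges join vertex-disjoint cycle edges; this graph is connected and non-bipartite for $m\geq5$, so the relation propagates to $s_{j}=4$ for all $j$, i.e.\ $e(c_{j},T_{i})+e(c_{j+1},T_{i})=4$ around the cycle. Since $m$ is odd, this in turn forces $e(c_{j},T_{i})=2$ for every $j$ and every $i$. Hence each $c_{j}$ is adjacent to exactly two vertices of each $T_{i}$, and I record the unique missed vertex $m_{i}(c_{j})\in V(T_{i})$. Each cycle edge $c_{j}c_{j+1}$ then has the nonempty common-neighbor set $V(T_{i})\setminus\{m_{i}(c_{j}),m_{i}(c_{j+1})\}$ in $T_{i}$.

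To finish, I want two vertex-disjoint cycle edges with distinct common neighbors in a single $T_{i}$. If some edge has two common neighbors, I pick any disjoint partner edge and choose distinct representatives directly. Otherwise every edge has exactly one common neighbor, so $m_{i}$ assigns distinct values to adjacent vertices of $C$; and if all these singletons coincided at one vertex $p$, then $m_{i}$ would take only the two values in $V(T_{i})\setminus\{p\}$, giving a proper $2$-colouring of the odd cycle $C$, which is impossible. Either way I obtain vertex-disjoint edges $e_{j},e_{l}$ and distinct $p,q\in V(T_{i})$ with $p$ a common neighbor of $e_{j}$ and $q$ of $e_{l}$; then $c_{j}c_{j+1}p$ and $c_{l}c_{l+1}q$ are two vertex-disjoint triangles, yielding the desired $kC_{3}$ and the contradiction. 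I expect the main obstacle to be the bookkeeping of the second and third paragraphs, namely squeezing the two-sided edge count down to an exact $2$-regular attachment pattern between $C$ and every $T_{i}$; once that rigidity is in hand, the concluding odd-cycle colouring contradiction is short.
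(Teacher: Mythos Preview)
Your argument is correct and takes a genuinely different route from the paper's. The paper leans on Claim~\ref{C14}: for each consecutive matching $\{v_iv_{i+1},v_{i+2}v_{i+3}\}$ it knows $e(M,T_j)=8$ exactly, and it then runs through the three structural types $G_1,G_2,G_3$ of that claim, eliminating $G_2$ and $G_3$ by propagating the edge counts around the odd cycle and finishing the $G_1$ case with an explicit pair of triangles. You bypass Claim~\ref{C14} entirely. Your lower bound $e(V(C),T)\ge 2m(k-1)$ from Claim~\ref{C11} and your upper bound $e(V(C),T_i)\le 2m$ from Lemma~\ref{l4}, double-counted over all $\tfrac{m(m-3)}{2}$ vertex-disjoint edge pairs of $C$, squeeze to the rigid conclusion $e(c_j,T_i)=2$ for every $j,i$ (using that the ``disjointness graph'' $\overline{C_m}$ is connected and non-bipartite for odd $m\ge 5$, and then that $m$ itself is odd). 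Your endgame---recording the missed vertex $m_i(c_j)$ and arguing that the induced map cannot be a proper $2$-colouring of the odd cycle---is a clean replacement for the paper's $G_1$ analysis. The upshot: the paper's proof amortises work into the earlier structural Claim~\ref{C14}; yours is self-contained for this claim and more conceptual, at the cost of the extra double-counting and the observation about $\overline{C_m}$.

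One small point to tighten: in your final paragraph, after you deduce that not all the single common neighbours $p_j$ coincide, you still need two \emph{vertex-disjoint} cycle edges with distinct $p_j$'s. This follows immediately by re-invoking the connectedness of the disjointness graph (if $p_j=p_l$ for every disjoint pair then all $p_j$ are equal), so just say so.
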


\begin{proof}
By contradiction. Let $C=v_{1}v_{2}\cdots v_{l}v_{1}$ (indices are taken modulo $l$) be an odd cycle in $G-T$. Since $G$ contains no $kC_{3}$'s, we know that $G-T$ contains no $C_{3}$'s which implies that $l\geq5$. Let $T_{1}, T_{2}, \ldots, T_{k-1}$ denote $k-1$ vertex-disjoint triangles contained in $T$. By Claim \ref{C14}, we have $e(\{v_{i}v_{i+1}, v_{i+2}v_{i+3}\}, T_{j})=8$ for every $1\leq i\leq l$ and $1\leq j\leq k-1$.

If there exists $T_{j'}\subseteq T$ such that $G[\{v_{i'}, v_{i'+1}, v_{i'+2}, v_{i'+3}\}\cup V(T_{j'})]$ contains a subgraph isomorphic to $G_{2}$ for some $i'$, $1\leq i'\leq l$, then by Claim \ref{C14}, it may be assumed without loss of generality that $e(v_{i'}v_{i'+1}, T_{j'})=5$ and $e(v_{i'+2}v_{i'+3}, T_{j'})=3$. This implies that $e(v_{i'+3}v_{i'+4}, T_{j'})=3$ and $e(v_{i'+4}v_{i'+5}, T_{j'})=5$, furthermore $e(v_{i'+1}v_{i'+2}, T_{j'})=3$. This contradicts that $e(\{v_{i'+1}v_{i'+2}, v_{i'+3}v_{i'+4}\}, T_{j'})=8$.

Similarly, if there exists $T_{j'}\subseteq T$ such that $G[\{v_{i'}, v_{i'+1}, v_{i'+2}, v_{i'+3}\}\cup V(T_{j'})]$ contains a subgraph isomorphic to $G_{3}$ for some $i'$, $1\leq i'\leq l$, then by Claim \ref{C14}, it may be assumed without loss of generality that $e(v_{i'}v_{i'+1}, T_{j'})=6$ and $e(v_{i'+2}v_{i'+3}, T_{j'})=2$. This implies that $e(v_{i'+3}v_{i'+4}, T_{j'})=2$ and $e(v_{i'+4}v_{i'+5}, T_{j'})=6$, furthermore $e(v_{i'+1}v_{i'+2}, T_{j'})=2$. Hence $e(\{v_{i'+1}v_{i'+2}, v_{i'+3}v_{i'+4}\}, T_{j'})=4$, a contradiction.

From the above analysis, we see that $G[\{v_{i}, v_{i+1}, v_{i+2}, v_{i+3}\}\cup V(T_{j})]$ contains a subgraph isomorphic to $G_{1}$ for all $1\leq i\leq l$ and $1\leq j\leq k-1$. Let $T_{1}=xyzx$ and $M=\{v_{1}v_{2}, v_{3}v_{4}\}$. Without loss of generality, assume that $N_{M}(x)=\{v_{1}, v_{2}, v_{3}, v_{4}\}$, $N_{M}(y)=\{v_{1}, v_{3}\}$ and $N_{M}(z)=\{v_{2}, v_{4}\}$. Then $N_{C}(x)=V(C)$. Since $C$ is an odd cycle and $G[\{v_{i}, v_{i+1}, v_{i+2}, v_{i+3}\}\cup V(T_{j})]$ contains a subgraph isomorphic to $G_{1}$ for all $1\leq i\leq l$ and $1\leq j\leq k-1$, there exist two consecutive vertices in $C$, say $v_{s}$ and $v_{s+1}$, such that $\{v_{s}, v_{s+1}\}\subset N_{C}(y)$ or $\{v_{s}, v_{s+1}\}\subset N_{C}(z)$. Now $xv_{s-1}v_{s-2}x$, one of $\{yv_{s}v_{s+1}y, zv_{s}v_{s+1}z\}$ and $T_{2}, \ldots, T_{k-1}$ constitute $k$ vertex-disjoint triangles contained in $G$, a contradiction.
\end{proof}

\begin{claim}\label{C16-}
Let $T\in\mathcal{T}$ such that $G-T$ contains a $2K_{2}$, and let $T_{1}, T_{2}, \ldots, T_{k-1}$ denote $k-1$ vertex-disjoint triangles contained in $T$. If $e(v, T_{i})\leq2$ for each $v\in V(G-T)$ and $T_{i}\subseteq T$, then there exists a partition $V(G)=V_{1}\cup V_{2}\cup V_{3}$, so that $|V_{1}|=k-1$, $|V_{2}|=|V_{3}|=\frac{n-k+1}{2}$, any two vertices in different parts are adjacent, and there are no edges joining pairs of vertices in $V_{2}$ and so does $V_{3}$.
\end{claim}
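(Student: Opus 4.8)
The plan is to pin down the structure of $G$ in three stages: first show that $G-T$ is a balanced complete bipartite graph, then determine exactly how each triangle $T_i$ attaches to the two sides, and finally read off the partition $V_1\cup V_2\cup V_3$ and verify all required adjacencies and non-adjacencies. The last verification I intend to get cheaply from the minimum degree hypothesis rather than by brute force, which is what keeps the argument short.

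First I would fix the $k-1$ triangles $T_1,\dots,T_{k-1}$ in $T$ and recall from Claim \ref{C15} that $G-T$ is bipartite, say with parts $A$ and $B$, where $|A|+|B|=n-3k+3=:N$. Since $e(v,T)=\sum_i e(v,T_i)\le 2(k-1)$ by hypothesis, I get $d_{G-T}(v)=d_G(v)-e(v,T)\ge \frac{n+k-1}{2}-2(k-1)=\frac{N}{2}$ for every $v\in V(G-T)$. As a vertex of $A$ has at most $|B|$ neighbours in $G-T$, this forces $|A|=|B|=\frac{N}{2}=:m$, shows that $G-T$ is the complete bipartite graph with parts $A,B$, and (by equality throughout) gives $e(v,T_i)=2$ for every $v\in V(G-T)$ and every $i$. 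Note $m\ge 2$ since $n\ge 3k+1$.

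Next, for each fixed $T_i$ I would prove that all vertices of $A$ are non-adjacent to one common vertex $x_i$ of $T_i$, all vertices of $B$ are non-adjacent to one common vertex $y_i$, and $x_i\ne y_i$; write $z_i$ for the third vertex. Each statement follows by exhibiting a second triangle inside $H_i:=G[A\cup B\cup V(T_i)]$ which, with the other $k-2$ triangles, yields a $kC_3$: if two vertices of $A$ missed different vertices of $T_i$, then for any $w\in B$ (adjacent to two of the three triangle vertices, since $e(w,T_i)=2$) one of two natural choices of triangle-pair in $H_i$ gives a $2C_3$; and if $A,B$ both missed the same vertex $t$, then $t$ has no neighbour in $A\cup B$, so distinct $u,u'\in A$, $w,w'\in B$ give the two triangles $\{u,w,t'\}$ and $\{u',w',t''\}$ with $\{t',t''\}=V(T_i)\setminus\{t\}$. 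This case analysis, resting on $e(v,T_i)=2$ exactly and $m\ge 2$, is where the work lives and is the main obstacle.

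Finally I set $V_1=\{z_1,\dots,z_{k-1}\}$, $V_2=A\cup\{x_1,\dots,x_{k-1}\}$, $V_3=B\cup\{y_1,\dots,y_{k-1}\}$, whose sizes are $k-1$ and $m+(k-1)=\frac{n-k+1}{2}$ as required. Independence of $A$ and $B$ and the non-edges $A$–$x_i$, $B$–$y_i$ are immediate; the remaining non-edges $x_ix_j,y_iy_j\notin E(G)$ for $i\ne j$ I would rule out by a $3C_3$ construction, e.g. an edge $x_ix_j$ together with $b\in B$ gives a triangle $\{x_i,x_j,b\}$, while $\{a,y_i,z_i\}$ and $\{a',y_j,z_j\}$ (distinct $a,a'\in A$) give two more, replacing $T_i,T_j$ by three triangles and producing a $kC_3$. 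The cross-edges then fall out of a degree count: $x_i$ is non-adjacent to all of $A$ and to every $x_\ell$ $(\ell\ne i)$, i.e.\ to at least $m+(k-2)$ vertices, so $d_G(x_i)\le(n-1)-(m+k-2)=\frac{n+k-1}{2}\le\delta(G)$ forces equality and hence adjacency of $x_i$ to everything outside $V_2$; the same holds for $y_i$ by symmetry. Since every cross pair meets some $x_\ell$, some $y_\ell$, or a vertex of $A\cup B$ — each now known to be complete to the outside of its own part — all cross adjacencies hold, which completes the partition and the proof.
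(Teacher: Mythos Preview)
Your proof is correct. The first stage (forcing $G-T=K_{m,m}$ with $e(v,T_i)=2$) coincides with the paper's, but from there the two arguments diverge.

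The paper, after identifying via Claim~\ref{C14} the vertex $t_{i_1}$ of each $T_i$ that is complete to $A\cup B$, sets $V_1=\{t_{1_1},\dots,t_{(k-1)_1}\}$ and proves that $G-V_1$ is bipartite by taking a shortest odd cycle there and running a three-case analysis (cycle entirely in the $t_{i_2},t_{i_3}$ vertices; cycle containing an edge of $G-T$; cycle meeting only one side of $A\cup B$), each time producing a $kC_3$. Only once bipartiteness is in hand does the degree bound pin down the partition.

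You instead determine the attachment of each $T_i$ to $A,B$ by a direct ``one-missing-vertex'' argument (each $a\in A$ misses the same $x_i$, each $b\in B$ the same $y_i$, and $x_i\ne y_i$), which bypasses Claim~\ref{C14}; then you kill the only remaining obstructions $x_ix_j$ and $y_iy_j$ with a single explicit $3C_3$ replacement, and finish with the tight degree count $d_G(x_i)\le (n-1)-(m+k-2)=\tfrac{n+k-1}{2}\le\delta(G)$ to force every cross edge. This avoids the paper's odd-cycle case split entirely and is noticeably shorter; the price is the small ad~hoc $2C_3$/$3C_3$ constructions, but those are immediate once $e(v,T_i)=2$ and $m\ge 2$ are in hand. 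One cosmetic point: your phrase ``one of two natural choices of triangle-pair'' would read better if you name them --- when $w$ is adjacent to $q$ take $\{a_1,w,q\}$ and $\{a_2,p,r\}$, otherwise $w$ is adjacent to $p,r$ and $\{a_1,q,r\}$, $\{a_2,w,p\}$ works.
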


\begin{proof}
For each $v\in V(G-T)$ and $T_{i}\subseteq T$, since $e(v, T_{i})\leq2$, we have $d_{G-T}(v)\geq\frac{n+k-1}{2}-2(k-1)=\frac{n-3k+3}{2}$. Recall that $v(G-T)=n-3(k-1)=n-3k+3$. By Claim \ref{C15}, we see that $G-T=K_{\frac{n-3k+3}{2}, \frac{n-3k+3}{2}}$, which implies that all the equalities are attained in above, that is, $e(v, T_{i})=2$ for every $v\in V(G-T)$ and $T_{i}\subseteq T$. Let $(A, B)$ be a bipartition of $G-T$. Then $|A|=\frac{n-3k+3}{2}\geq2$ and $|B|=\frac{n-3k+3}{2}\geq2$ since $n\geq3k+1$. For any subgraph $M=2K_{2}$ of $G-T$ and every $T_{i}\in T$, by Claim \ref{C14}, we know that $G[V(M)\cup V(T_{i})]$ contains a subgraph isomorphic to $G_{1}$. Let $V(T_{i})=\{t_{i_{1}}, t_{i_{2}}, t_{i_{3}}\}$. Without loss of generality, assume that $e(t_{i_{1}}, M)=4$ for every $1\leq i\leq k-1$. Then by $G-T=K_{\frac{n-3k+3}{2}, \frac{n-3k+3}{2}}$, $e(t_{i_{1}}, G-T)=v(G-T)=n-3k+3$ with $1\leq i\leq k-1$. If there exist $a\in A$ and $b\in B$ such that $a, b\in N_{G-T}(v')$ for some $v'\in \{t_{1_{2}}, t_{2_{2}}, \ldots, t_{(k-1)_{2}}, t_{1_{3}}, t_{2_{3}}, \ldots, t_{(k-1)_{3}}\}$, it may be assumed without loss of generality that $v'=t_{1_{2}}$, then $abt_{1_{2}}a$, $t_{1_{1}}cdt_{1_{1}}$ and $T_{2}, \ldots, T_{k-1}$ constitute $k$ vertex-disjoint triangles contained in $G$ for each $cd\in E(G-T-\{a, b\})$, a contradiction. Hence for all $v\in\{t_{1_{2}}, t_{2_{2}}, \ldots, t_{(k-1)_{2}}, t_{1_{3}}, t_{2_{3}}, \ldots, t_{(k-1)_{3}}\}$, either $N_{G-T}(v)\subseteq A$ or $N_{G-T}(v)\subseteq B$. Since $G-T=K_{\frac{n-3k+3}{2}, \frac{n-3k+3}{2}}$ and $G[V(M)\cup V(T_{i})]$ contains a subgraph isomorphic to $G_{1}$ for any subgraph $M=2K_{2}$ of $G-T$ and every $T_{i}\in T$, either $N_{G-T}(t_{i_{2}})=A$ and $N_{G-T}(t_{i_{3}})=B$ or $N_{G-T}(t_{i_{2}})=B$ and $N_{G-T}(t_{i_{3}})=A$, where $1\leq i\leq k-1$.

Next, we will show that $G-\{t_{1_{1}}, t_{2_{1}}, \ldots, t_{(k-1)_{1}}\}$ is a bipartite graph. Suppose the contrary. Let $C=v_{1}v_{2}\cdots v_{l}v_{1}$ be the shortest odd cycle in $G-\{t_{1_{1}}, t_{2_{1}}, \ldots, t_{(k-1)_{1}}\}$. Since $G-T=K_{\frac{n-3k+3}{2}, \frac{n-3k+3}{2}}$, we have $V(C)\cap\{t_{1_{2}}, t_{2_{2}}, \ldots, t_{(k-1)_{2}}, t_{1_{3}}, t_{2_{3}}, \ldots, t_{(k-1)_{3}}\}\neq\emptyset$. We distinguish three cases.

\setcounter{case}{0}
\begin{case}
$V(C)\subset\{t_{1_{2}}, t_{2_{2}}, \ldots, t_{(k-1)_{2}}, t_{1_{3}}, t_{2_{3}}, \ldots, t_{(k-1)_{3}}\}$.
\end{case}

In this case, there exist two consecutive vertices in $C$, say $v_{x}$ and $v_{x+1}$, such that either $N_{G-T}(v_{x})=N_{G-T}(v_{x+1})=A$ or $N_{G-T}(v_{x})=N_{G-T}(v_{x+1})=B$. If $l\geq5$, then $av_{x}v_{x+1}a$ or $bv_{x}v_{x+1}b$ is a shorter odd cycle, where $a\in A$ and $b\in B$, a contradiction. Hence we have $l=3$. Since for every $1\leq i\leq k-1$, either $N_{G-T}(t_{i_{2}})=A$ and $N_{G-T}(t_{i_{3}})=B$ or $N_{G-T}(t_{i_{2}})=B$ and $N_{G-T}(t_{i_{3}})=A$, it may be assumed without loss of generality that one of the following two alternatives holds: $(1)$ $V(C)=\{t_{1_{2}}, t_{2_{2}}, t_{2_{3}}\}$, and $N_{G-T}(t_{1_{2}})=N_{G-T}(t_{2_{2}})=A$ and $N_{G-T}(t_{2_{3}})=B$;
$(2)$ $V(C)=\{t_{1_{2}}, t_{2_{2}}, t_{3_{2}}\}$ and $N_{G-T}(t_{1_{2}})=N_{G-T}(t_{2_{2}})=N_{G-T}(t_{3_{2}})=A$. Suppose that $a_{1}, a_{2}\in A$ and $b_{1}, b_{2}\in B$. If $(1)$ is the case then $a_{1}t_{1_{2}}t_{2_{2}}a_{1}$, $b_{1}t_{2_{1}}t_{2_{3}}b_{1}$, $b_{2}t_{1_{1}}t_{1_{3}}b_{2}$ and $T_{3}, \ldots, T_{k-1}$ constitute $k$ vertex-disjoint triangles contained in $G$, a contradiction. If $(2)$ is the case then $a_{1}t_{1_{2}}t_{2_{2}}a_{1}$, $b_{1}t_{2_{1}}t_{2_{3}}b_{1}$, $a_{2}t_{3_{1}}t_{3_{2}}a_{2}$, $b_{2}t_{1_{1}}t_{1_{3}}b_{2}$ and $T_{4}, \ldots, T_{k-1}$ constitute $k$ vertex-disjoint triangles contained in $G$, a contradiction.

\begin{case}
There exists an edge $ab\in E(G-T)$ such that $a, b\in V(C)$.
\end{case}

Since $C$ is the shortest odd cycle in $G-\{t_{1_{1}}, t_{2_{1}}, \ldots, t_{(k-1)_{1}}\}$, we see that $ab\in E(C)$. Without loss of generality, assume that $ab=v_{x}v_{x+1}$. Recall that $V(C)\cap\{t_{1_{2}}, t_{2_{2}}, \ldots, t_{(k-1)_{2}}, \\t_{1_{3}}, t_{2_{3}}, \ldots, t_{(k-1)_{3}}\}\neq\emptyset$ and for each $v\in\{t_{1_{2}}, t_{2_{2}}, \ldots, t_{(k-1)_{2}}, t_{1_{3}}, t_{2_{3}}, \ldots, t_{(k-1)_{3}}\}$, either $N_{G-T}(v)=A$ or $N_{G-T}(v)=B$. We have $l\geq5$ and $V(C)\cap\{t_{1_{2}}, t_{2_{2}}, \ldots, t_{(k-1)_{2}}, t_{1_{3}}, t_{2_{3}}, \ldots, \\t_{(k-1)_{3}}\}\subseteq\{v_{x-1}, v_{x+2}\}$; otherwise we can find a shorter odd cycle in $G-\{t_{1_{1}}, t_{2_{1}}, \ldots, t_{(k-1)_{1}}\}$. This implies that $|V(C)\setminus\{t_{1_{2}}, t_{2_{2}}, \ldots, t_{(k-1)_{2}}, t_{1_{3}}, t_{2_{3}}, \ldots, t_{(k-1)_{3}}\}|\geq3$. Hence there exists a vertex $u\in V(G-T-\{a, b\})$ such that $u\in V(C)$. Since $G-T=K_{\frac{n-3k+3}{2}, \frac{n-3k+3}{2}}$, $v_{x}v_{x+1}\ldots uv_{x}$ or $v_{x+1}v_{x+2}\ldots uv_{x+1}$ is a shorter odd cycle in $G-\{t_{1_{1}}, t_{2_{1}}, \ldots, t_{(k-1)_{1}}\}$, a contradiction.

\begin{case}
$V(C)\cap V(G-T)\neq\emptyset$, and $V(C)\cap V(G-T)\subseteq A$ or $V(C)\cap V(G-T)\subseteq B$.
\end{case}

Without loss of generality, assume that $V(C)\cap V(G-T)\subseteq A$. Since $V(C)\cap V(G-T)\neq\emptyset$, we have $|V(C)\cap V(G-T)|\geq1$. If $|V(C)\cap V(G-T)|\geq2$, then there exist vertices $a_{1}, a_{2}\in A$ such that $a_{1}, a_{2}\in V(C)$ and $a_{1}a_{2}\notin E(C)$, which implies that $l\geq5$. Furthermore, we have $N_{C}(a_{1}), N_{C}(a_{2})\subset\{t_{1_{2}}, t_{2_{2}}, \ldots, t_{(k-1)_{2}}, t_{1_{3}}, t_{2_{3}}, \ldots, t_{(k-1)_{3}}\}$. Since for each vertex $v\in\{t_{1_{2}}, t_{2_{2}}, \ldots, t_{(k-1)_{2}}, t_{1_{3}}, t_{2_{3}}, \ldots, t_{(k-1)_{3}}\}$, either $N_{G-T}(v)=A$ or $N_{G-T}(v)=B$, we can find a shorter odd cycle in $G-\{t_{1_{1}}, t_{2_{1}}, \ldots, t_{(k-1)_{1}}\}$, a contradiction. So we have $|V(C)\cap V(G-T)|=1$. Without loss of generality, assume that $V(C)\cap V(G-T)=\{v_{1}\}$. Then $V(C)\setminus\{v_{1}\}\subset\{t_{1_{2}}, t_{2_{2}}, \ldots, t_{(k-1)_{2}}, t_{1_{3}}, t_{2_{3}}, \ldots, t_{(k-1)_{3}}\}$.

If $l=3$, then $N_{G-T}(v_{2})=N_{G-T}(v_{3})=A$, since for every $1\leq i\leq k-1$, either $N_{G-T}(t_{i_{2}})=A$ and $N_{G-T}(t_{i_{3}})=B$ or $N_{G-T}(t_{i_{2}})=B$ and $N_{G-T}(t_{i_{3}})=A$. Furthermore, it may be assumed without loss of generality that $v_{2}=t_{1_{2}}$ and $v_{3}=t_{2_{2}}$. Suppose that $a_{1}, a_{2}\in A$, $b_{1}, b_{2}\in B$ and $a_{1}=v_{1}$. Now $a_{1}t_{1_{2}}t_{2_{2}}a_{1}$, $b_{1}t_{1_{1}}t_{1_{3}}b_{1}$, $b_{2}t_{2_{1}}t_{2_{3}}b_{2}$ and $T_{3}, \ldots, T_{k-1}$ constitute $k$ vertex-disjoint triangles contained in $G$, a contradiction.

If $l\geq5$, then there exist two consecutive vertices in $V(C)\setminus\{v_{1}\}$, say $v_{x}$ and $v_{x+1}$, such that either $N_{G-T}(v_{x})=N_{G-T}(v_{x+1})=A$ or $N_{G-T}(v_{x})=N_{G-T}(v_{x+1})=B$. Now $av_{x}v_{x+1}a$ or $bv_{x}v_{x+1}b$ is a shorter odd cycle, where $a\in A$ and $b\in B$, a contradiction.
\end{proof}

\begin{claim}\label{C16}
Let $T\in\mathcal{T}$ such that $G-T$ contains a $2K_{2}$, and let $T_{1}, T_{2}, \ldots, T_{k-1}$ denote $k-1$ vertex-disjoint triangles contained in $T$. If $n\geq3k+2$, then $e(v, T_{i})\leq2$ for each $v\in V(G-T)$ and $T_{i}\subseteq T$.
\end{claim}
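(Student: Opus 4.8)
The plan is to argue by contradiction. Suppose $e(v,T_i)=3$ for some $v\in V(G-T)$ and some triangle $T_i\subseteq T$, so that $G[\{v\}\cup V(T_i)]$ is a $K_4$. I would first isolate a reduction that does almost all of the work: it suffices to produce an edge $pq$ of $G-T-v$ together with a vertex $t\in V(T_i)$ adjacent to both $p$ and $q$. Indeed, in that situation $\{t,p,q\}$ and $\{v\}\cup(V(T_i)\setminus\{t\})$ are two vertex-disjoint triangles (the second is a triangle because $v$ is joined to all of $T_i$ and the two remaining vertices of $T_i$ are adjacent), and together with $\{T_j:j\neq i\}$ they form a $kC_3$, contradicting the hypothesis. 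Moreover, such a common neighbour $t$ exists as soon as $e(pq,T_i)\geq4$: if no vertex of $T_i$ were adjacent to both $p$ and $q$, each of the three vertices of $T_i$ would contribute at most one edge, giving $e(pq,T_i)\leq3$. So the whole task reduces to finding an edge of $G-T-v$ sending at least four edges to $T_i$.

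Next I would split into two cases according to whether $G-T-v$ contains a $2K_2$. If it does, say $M'=\{e_1,e_2\}$, then since $M'$ is also a $2K_2$ of $G-T$, Claim \ref{C14} gives $e(e_1,T_i)+e(e_2,T_i)=e(M',T_i)=8$, so one of $e_1,e_2$ has weight at least $4$ to $T_i$, and the reduction finishes this case. Otherwise $G-T-v$ has no two independent edges; since $G-T$ is bipartite by Claim \ref{C15} yet does contain a $2K_2$, its surviving edges form a star, so all edges of $G-T$ are covered by $v$ together with one further vertex $z$. Thus $G-T$ has a vertex cover $\{v,z\}$, and (being independent and not equal to $vz$) the given $2K_2$ consists of one edge $vv'$ at $v$ and one edge $zz'$ at $z$, with $v',z'\in V(G-T)\setminus\{v,z\}$ distinct.

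This remaining case is the heart of the matter, and it is exactly where the hypothesis $n\geq3k+2$ enters. Every vertex $u\in V(G-T)\setminus\{v,z\}$ now has $N_{G-T}(u)\subseteq\{v,z\}$, hence $d_{G-T}(u)\leq2$ and
$$e(u,T)=d_G(u)-d_{G-T}(u)\geq\frac{n+k-1}{2}-2.$$
Since $n\geq3k+2$ makes the right-hand side strictly exceed $2(k-1)$, and $T$ is a disjoint union of $k-1$ triangles, $u$ must satisfy $e(u,T_l)=3$ for some $l$; that is, every vertex of $G-T$ outside $\{v,z\}$ is joined to all three vertices of some triangle of $T$, just as $v$ is to $T_i$. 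As $n\geq3k+2$ also yields $v(G-T)=n-3k+3\geq5$, there are at least three such vertices, and in particular the $2K_2$-partners $v'$ and $z'$ are of this kind, with $v'v,\,z'z\in E(G-T)$.

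I would then assemble the missing triangle from these completed $K_4$'s and the edges $vv'$ and $zz'$, and this assembly is the step I expect to be the main obstacle. The difficulty is not a single hard estimate but bookkeeping: vertices that saturate \emph{different} triangles of $T$ free up vertices lying in different triangles, which need not be mutually adjacent, so one cannot immediately build an extra triangle. The argument therefore splits along which triangles $v,v',z,z'$ (and the other completing vertices) saturate — for example, if $v$ and $v'$ saturate the same triangle one gets a triangle $\{v,v',t\}$ at once and then hunts for a further completing vertex covering the remaining edge of that triangle, whereas the ``all distinct'' subcases are eliminated by re-running the reduction of the first paragraph with a different saturated triangle in place of $T_i$ (the completing vertex playing the former role of $v$) and invoking Claims \ref{C11}--\ref{C14} and Lemma \ref{l4} once more. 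Keeping this case analysis finite and complete, rather than any individual computation, is where the real effort lies.
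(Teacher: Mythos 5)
Your setup is sound and in fact tracks the paper's own proof closely: the swap reduction (an edge $pq$ of $G-T-v$ with $e(pq,T_{i})\geq4$ yields a common neighbour in $T_{i}$ and hence a $kC_{3}$), the split on whether $G-T-v$ contains a $2K_{2}$, the disposal of the first case via Claim \ref{C14}, and, in the second case, the structural conclusion that $G-T$ has vertex cover $\{v,z\}$, that the given $2K_{2}$ is $\{vv',zz'\}$, and that every vertex of $G-T$ outside $\{v,z\}$ saturates some triangle of $T$. But the proof is not finished: the step you yourself call ``the heart of the matter''---producing the contradiction in this last configuration---is never executed. You only sketch a case analysis over which triangles $v,v',z,z'$ saturate and concede that making it finite and complete is ``where the real effort lies.'' That is a genuine gap, and the route you sketch (assembling an extra triangle directly out of the completed $K_{4}$'s) is also the wrong target: no triangle assembly is needed at all.

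The irony is that you already wrote down every ingredient of the correct finish. Since $n\geq3k+2$ gives $v(G-T)=n-3k+3\geq5$, there is a vertex $w\in V(G-T)\setminus\{v,z,v',z'\}$; this is exactly the ``third vertex'' you mention and then never use. As you observed, $N_{G-T}(w)\subseteq\{v,z\}$, so $e(w,T)\geq\frac{n+k-1}{2}-2>2(k-1)$ and $w$ saturates some triangle $T_{l}$, i.e.\ $e(w,T_{l})=3$. Now re-run your first-paragraph reduction with $w$ in the role of $v$ and $T_{l}$ in the role of $T_{i}$: since $G$ has no $kC_{3}$, every edge of $G-T-w$ sends at most $3$ edges to $T_{l}$; but $\{vv',zz'\}$ is a $2K_{2}$ lying inside $G-T-w$, so $e(\{vv',zz'\},T_{l})\leq6$, contradicting Claim \ref{C14}, which forces this quantity to equal $8$. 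This single application closes the case with no analysis of which triangles the various vertices saturate. It is also essentially the paper's argument: there, $G-T-u$ is shown to be a star plus at most one isolated vertex, a leaf $v_{i'}$ avoided by some $2K_{2}$ $M'$ is produced, $v_{i'}$ is shown to saturate a triangle, and $e(M',T_{1})\leq6$ contradicts Claim \ref{C14}. The only structural difference is that the paper dispatches $n=3k+2$ separately (for that $n$ the hypotheses of the lemma are vacuous by Theorem \ref{D}) and works with $n\geq3k+3$, whereas your degree count, done carefully with integrality, handles $n\geq3k+2$ uniformly.
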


\begin{proof}
If $n=3k+2$, then $\delta(G)\geq\frac{n+k-1}{2}=\frac{4k+1}{2}$, i.e., $\delta(G)\geq2k+1$. By Theorem \ref{D}, we can find a $kC_{3}$ in $G$. Thus we may assume that $n\geq3k+3$.

Suppose to the contrary that there exists a vertex $u\in V(G-T)$ such that $e(u, T_{i'})=3$ for some $T_{i'}\in T$. Recall that $n\geq3k+3$. We have $v(G-T-u)=n-3(k-1)-1\geq5$. Since $G-T$ contains a $2K_{2}$, we see that $G-T-u$ contains at least one edge. Let $V(T_{i})=\{t_{i_{1}}, t_{i_{2}}, t_{i_{3}}\}$, where $1\leq i\leq k-1$. Then for any edge $ab\in E(G-T-u)$, we have $e(ab, T_{i'})\leq3$; otherwise there exists a vertex in $V(T_{i'})$, say $t_{i'_{1}}$, such that $abt_{i'_{1}}a$ is a triangle, together with $ut_{i'_{2}}t_{i'_{3}}u$, and $\{T_{1}, T_{2}, \ldots, T_{k-1}\}\setminus\{T_{i'}\}$ constituting $k$ vertex-disjoint triangles contained in $G$, a contradiction. If $G-T-u$ contains a $2K_{2}$, say $M$, then $e(M, T_{i'})\leq6$, contrary to Claim \ref{C14}. So $G-T-u$ contains no $2K_{2}$'s. Since $G-T$ contains a $2K_{2}$, we see that $u$ is contained in every $2K_{2}$ of $G-T$.

We assert that $G-T-u$ does not contain more than one isolated vertex. Otherwise, suppose that $x$ and $y$ are two isolated vertices in $G-T-u$. Then $e(x, T)\geq\frac{n+k-1}{2}-1\geq2k>2(k-1)$ and $e(y, T)\geq\frac{n+k-1}{2}-1\geq2k>2(k-1)$. It may be supposed without loss of generality that $e(x, T_{j'})=3$ and $e(y, T_{j''})=3$, where $T_{j'}, T_{j''}\in T$. Similar to the discussion of $u$, we have for any edge $e$ of $G-T-x$, $e(e, T_{j'})\leq3$ and for any edge $f$ of $G-T-y$, $e(f, T_{j''})\leq3$. Since $u$ is contained in every $2K_{2}$ of $G-T$, we see that $G-T-x$ contains a $2K_{2}$, say $M_{1}$, or $G-T-y$ contains a $2K_{2}$, say $M_{2}$, which implies that $e(M_{1}, T_{j'})\leq6$ or $e(M_{2}, T_{j''})\leq6$, contrary to Claim \ref{C14}.

From the above analysis, one of the following two alternatives holds:
$(1)$ $V(G-T-u)=\{v_{0}, v_{1}, \ldots, v_{t-1}\}$
and $E(G-T-u)=\{v_{0}v_{1}, v_{0}v_{2}, \ldots, v_{0}v_{t-2}\}$;
$(2)$ $V(G-T-u)=\{v_{0}, v_{1}, \ldots, v_{t-1}\}$
and $E(G-T-u)=\{v_{0}v_{1}, v_{0}v_{2}, \ldots, v_{0}v_{t-1}\}$, where $t\geq5$.
By Claim \ref{C15}, we know that $G-T$ is a bipartite graph. Hence $d_{G-T}(v_{i})\leq2$ for every $1\leq i\leq t-1$. Since $G-T$ contains a $2K_{2}$, we have $uv_{0}\notin E(G)$. This implies that there exists a vertex $v_{i'}$ with $1\leq i'\leq t-1$ such that $G-T-v_{i'}$ contains a $2K_{2}$ in Cases (1) and (2), denote by $M'$. Note that $e(v_{i'}, T)\geq\frac{n+k-1}{2}-2\geq2k-1>2(k-1)$. So it may be assumed without loss of generality that $e(v_{i'}, T_{1})=3$. Let $M'=\{ab, cd\}$. Then $e(ab, T_{1})\leq3$; otherwise there exists a vertex in $V(T_{1})$, say $t_{1_{1}}$, such that $abt_{1_{1}}a$ is a triangle, together with $v_{i'}t_{1_{2}}t_{1_{3}}v_{i'}, T_{2}, \ldots, T_{k-1}$ constituting $k$ vertex-disjoint triangles contained in $G$, a contradiction. Similarly, we also have $e(cd, T_{1})\leq3$. Hence one can show that $e(M', T_{1})\leq6$, contrary to Claim \ref{C14}.
\end{proof}

Now, let $T$ be a graph in $\mathcal{T}$ such that $G-T$ contains a $2K_{2}$ and let $T_{1}, T_{2}, \ldots, T_{k-1}$ denote $k-1$ vertex-disjoint triangles contained in $T$. If $n\geq3k+2$, then by Claims \ref{C16-} and \ref{C16}, the lemma holds. For $n=3k+1$, by Claim \ref{C15} and $G-T$ contains a $2K_{2}$, we assume that $(\{a_{1}, a_{2}\}, \{b_{1}, b_{2}\})$ is a bipartition of $G-T$. Furthermore, by Claim \ref{C14}, one can show that
$$\sum\nolimits_{v\in V(G-T)}d_{G-T}(v)\geq\frac{4(3k+1+k-1)}{2}-8(k-1)=8.$$
Hence we have $G-T=K_{2,2}$. If $e(v, T_{i})\leq2$ for each $v\in V(G-T)$ and $T_{i}\subseteq T$, then by Claim \ref{C16-}, the lemma holds. So it may be assumed without loss of generality that $e(a_{1}, T_{1})=3$. By Claim \ref{C14}, we see that $G[\{a_{1}, a_{2}, b_{1}, b_{2}\}\cup V(T_{1})]$ contains a subgraph isomorphic to $G_{2}$. Otherwise, $G[\{a_{1}, a_{2}, b_{1}, b_{2}\}\cup V(T_{1})]$ contains two vertex-disjoint triangles, together with $T_{2}, \ldots, T_{k-1}$ constituting $k$ vertex-disjoint triangles contained in $G$, a contradiction. For every $T_{i}\in T$, let $T_{i}=t_{i_{1}}t_{i_{2}}t_{i_{3}}t_{i_{1}}$. Then we can assume that $N_{T_{1}}(a_{1})=\{t_{1_{1}}, t_{1_{2}}, t_{1_{3}}\}$, $N_{T_{1}}(a_{2})=\{t_{1_{3}}\}$, $N_{T_{1}}(b_{1})=\{t_{1_{1}}, t_{1_{2}}\}$ and $N_{T_{1}}(b_{2})=\{t_{1_{1}}, t_{1_{2}}\}$. This implies that $e(a_{2}, T-T_{1})\geq\frac{3k+1+k-1}{2}-(2+1)=2k-3>2(k-2)$, so it may be assumed without loss of generality that $e(a_{2}, T_{2})=3$. Similarly, one can show that $G[\{a_{1}, a_{2}, b_{1}, b_{2}\}\cup V(T_{2})]$ contains a subgraph isomorphic to $G_{2}$. By Claim \ref{C14}, we can assume that $N_{T_{2}}(a_{1})=\{t_{2_{1}}, t_{2_{2}}, t_{2_{3}}\}$, $N_{T_{2}}(a_{2})=\{t_{2_{3}}\}$, $N_{T_{2}}(b_{1})=\{t_{2_{1}}, t_{2_{2}}\}$ and $N_{T_{2}}(b_{2})=\{t_{2_{1}}, t_{2_{2}}\}$. First, replace $T_{1}$ by $t_{1_{1}}a_{1}b_{1}t_{1_{1}}$ and replace $G-T$ by $t_{1_{2}}t_{1_{3}}a_{2}b_{2}t_{1_{2}}$. Then one can show that $t_{1_{3}}t_{2_{1}}, t_{1_{3}}t_{2_{2}}, t_{1_{2}}t_{2_{3}}\in G$. Second, we replace $T_{1}$ by $t_{1_{2}}a_{1}b_{2}t_{1_{2}}$ and replace $G-T$ by $t_{1_{1}}t_{1_{3}}a_{2}b_{1}t_{1_{1}}$. Then we have $t_{1_{1}}t_{2_{3}}\in G$.

If $k=3$, that is, $n=10$, then $G$ is isomorphic to $G'$ in Figure $1$. If $k\geq4$, that is, $n\geq13$, then there must be two triangles in $T$, say $T_{1}$ and $T_{2}$, such that $G[V(T_{1})\cup V(T_{2})\cup V(G-T)]$ is isomorphic to $G'$ in Figure $1$. From the above analysis, we know that for each $T_{i}$ with $3\leq i\leq k-1$, $G[\{a_{1}, a_{2}, b_{1}, b_{2}\}\cup V(T_{i})]$ contains either a subgraph isomorphic to $G_{1}$ or a subgraph isomorphic to $G_{2}$.

If there exists $T_{i'}\in T$ with $3\leq i'\leq k-1$ such that $G[\{a_{1}, a_{2}, b_{1}, b_{2}\}\cup V(T_{i'})]$ contains a subgraph isomorphic to $G_{1}$, then by Claim \ref{C14}, it may be supposed without loss of generality that $N_{G-T}(t_{i'_{1}})=\{a_{1}, a_{2}, b_{1}, b_{2}\}$, $N_{G-T}(t_{i'_{2}})=\{a_{1}, a_{2}\}$ and $N_{G-T}(t_{i'_{3}})=\{b_{1}, b_{2}\}$. Replace $T_{i'}$ by $t_{i'_{1}}a_{2}b_{2}t_{i'_{1}}$ and replace $G-T$ by $a_{1}b_{1}t_{i'_{3}}t_{i'_{2}}a_{1}$. Then one can show that $t_{i'_{2}}t_{1_{1}}, t_{i'_{2}}t_{1_{2}}, t_{i'_{3}}t_{1_{3}}, t_{i'_{2}}t_{2_{1}}, t_{i'_{2}}t_{2_{2}}, t_{i'_{3}}t_{2_{1}}, t_{i'_{3}}t_{2_{2}}, t_{i'_{3}}t_{2_{3}}\in G$. Now $T_{1}$, $a_{1}t_{i'_{1}}t_{i'_{2}}a_{1}$, $a_{2}b_{2}t_{2_{1}}a_{2}$, $t_{i'_{3}}t_{2_{2}}t_{2_{3}}t_{i'_{3}}$ and $\{T_{3}, \ldots, T_{k-1}\}\backslash\{T_{i'}\}$ constitute $k$ vertex-disjoint triangles contained in $G$, a contradiction.

If there exists $T_{i'}\in T$ with $3\leq i'\leq k-1$ such that $G[\{a_{1}, a_{2}, b_{1}, b_{2}\}\cup V(T_{i'})]$ contains a subgraph isomorphic to $G_{2}$, then by Claim \ref{C14}, it may be supposed without loss of generality that $N_{G-T}(t_{i'_{1}})=\{a_{1}, a_{2}, b_{1}\}$, $N_{G-T}(t_{i'_{2}})=\{a_{1}, a_{2}, b_{1}\}$ and $N_{G-T}(t_{i'_{3}})=\{b_{1}, b_{2}\}$. Replace $T_{i'}$ by $t_{i'_{1}}a_{2}b_{1}t_{i'_{1}}$ and replace $G-T$ by $a_{1}b_{2}t_{i'_{3}}t_{i'_{2}}a_{1}$. Then one can show that $t_{i'_{2}}t_{1_{1}}, t_{i'_{2}}t_{1_{2}}, t_{i'_{3}}t_{1_{3}}, t_{i'_{2}}t_{2_{1}}, t_{i'_{2}}t_{2_{2}}, t_{i'_{3}}t_{2_{1}}, t_{i'_{3}}t_{2_{2}}, t_{i'_{3}}t_{2_{3}}\in G$. Now $T_{1}$, $b_{1}t_{i'_{1}}t_{i'_{2}}b_{1}$, $a_{2}b_{2}t_{2_{1}}a_{2}$, $t_{i'_{3}}t_{2_{2}}t_{2_{3}}t_{i'_{3}}$ and $\{T_{3}, \ldots, T_{k-1}\}\backslash\{T_{i'}\}$ constitute $k$ vertex-disjoint triangles contained in $G$, a contradiction.

The proof of Lemma \ref{11} is complete. {\hfill$\Box$}

\noindent\textbf{Proof of Lemma \ref{111}.}
Let $\mathcal{T}$ denote the set of all those subgraphs of $G$ which have $3k-3$ vertices and contain $k-1$ vertex-disjoint triangles and let $\mathcal{T}^{*}$ denote the set of those elements $T$ of $\mathcal{T}$ for which $G-T$ contains a path of maximal length. Since $G$ contains a $(k-1)C_{3}$, we see that $\mathcal{T}, \mathcal{T}^{*}\neq\emptyset$. Now, we proceed by proving the following claims.

\setcounter{claim}{0}
\begin{claim}\label{C17}
If $T\in\mathcal{T}$ and $uv$ is an edge of $G-T$, then $e(u, T)+e(v, T)\geq4k-5$.
\end{claim}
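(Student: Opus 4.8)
The plan is to mirror the short degree-counting argument already used for Claim \ref{C11}, adjusting only for the slightly weaker minimum-degree hypothesis $\delta(G)\geq\frac{n+k-2}{2}$ of Lemma \ref{111}. The key structural observation is that $G-T$ is triangle-free: since $T$ contains $k-1$ vertex-disjoint triangles $T_{1},\dots,T_{k-1}$, any triangle inside $G-T$ would be vertex-disjoint from all of them, producing a $kC_{3}$ and contradicting the hypothesis that $G$ has no $kC_{3}$.

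First I would bound the degrees of $u$ and $v$ inside $G-T$. Because $G-T$ is triangle-free and $uv\in E(G-T)$, the endpoints $u$ and $v$ can have no common neighbour in $G-T$, so $N_{G-T}(u)$ and $N_{G-T}(v)$ are disjoint subsets of $V(G-T)$. Hence their sizes add without overlap, giving $d_{G-T}(u)+d_{G-T}(v)\leq v(G-T)=n-3(k-1)=n-3k+3$.

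Next I would convert these internal degrees into edge counts toward $T$. Writing $e(u,T)=d_{G}(u)-d_{G-T}(u)$ and $e(v,T)=d_{G}(v)-d_{G-T}(v)$ and summing yields
\[
e(u,T)+e(v,T)=\big(d_{G}(u)+d_{G}(v)\big)-\big(d_{G-T}(u)+d_{G-T}(v)\big)\geq 2\cdot\frac{n+k-2}{2}-(n-3k+3)=4k-5,
\]
where the last inequality uses $\delta(G)\geq\frac{n+k-2}{2}$. This is precisely the asserted bound.

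I do not expect any genuine obstacle: the entire argument is a one-line counting estimate. The only step requiring care is the triangle-free degree-sum bound, where one must note that the edge $uv$ itself forces $u$ and $v$ to have disjoint neighbourhoods in $G-T$, so the two degrees may be added without double counting. The value $4k-5$ (rather than the $4k-4$ obtained in Claim \ref{C11}) is simply the cost of the one-unit weaker minimum-degree assumption in Lemma \ref{111}.
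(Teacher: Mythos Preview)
Your proof is correct and follows exactly the same degree-counting argument as the paper: observe that $G-T$ is triangle-free, deduce $d_{G-T}(u)+d_{G-T}(v)\le n-3k+3$, and subtract from the minimum-degree bound $d_G(u)+d_G(v)\ge n+k-2$ to obtain $4k-5$. You even spell out the disjoint-neighbourhood justification for the degree-sum bound, which the paper leaves implicit.
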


\begin{proof}
Since $G$ contains no $kC_{3}$'s, we see that $G-T$ contains no triangles, which implies that $d_{G-T}(u)+d_{G-T}(v)\leq n-3(k-1)=n-3k+3$. Hence $e(u, T)+e(v, T)\geq2(\frac{n+k-2}{2})-(n-3k+3)=4k-5$.
\end{proof}

\begin{claim}\label{C18}
If $T\in\mathcal{T}^{*}$, then $G-T$ contains at least one edge.
\end{claim}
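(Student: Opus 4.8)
The plan is to argue by contradiction, mirroring the argument of Claim \ref{C12}: assuming that $G-T$ has no edge at all, I will exploit the minimum-degree hypothesis to locate a triangle of $T$ that is richly joined to two isolated vertices, and then rearrange the triangles so as to produce a new member $H\in\mathcal{T}$ for which $G-H$ \emph{does} contain an edge. Since a single edge is a path strictly longer than any path in the (edgeless) graph $G-T$, this contradicts the defining maximality of $\mathcal{T}^{*}$.

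More precisely, suppose $G-T$ contains no edge, and write $T_{1},T_{2},\ldots,T_{k-1}$ for the $k-1$ vertex-disjoint triangles in $T$. Since $n\geq3k+1$, we have $v(G-T)=n-3(k-1)\geq4$, so we may pick two distinct vertices $u$ and $v$ of $G-T$; by assumption both are isolated in $G-T$, so all of their edges go into $T$, and there is no need to invoke Claim \ref{C17}. Using $\delta(G)\geq\frac{n+k-2}{2}$ this gives
\[
e(u,T)+e(v,T)=d_{G}(u)+d_{G}(v)\geq 2\cdot\frac{n+k-2}{2}=n+k-2\geq 4k-1>4(k-1).
\]
Summing $e(u,T_{i})+e(v,T_{i})$ over $1\leq i\leq k-1$ therefore exceeds $4(k-1)$, so by pigeonhole some triangle, say $T_{1}$, satisfies $e(u,T_{1})+e(v,T_{1})\geq5$.

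It remains to rearrange. Writing $V(T_{1})=\{x,y,z\}$, the bound $e(u,T_{1})+e(v,T_{1})\geq5$ forces (up to swapping the roles of $u$ and $v$) that $u$ is joined to all of $x,y,z$ and that $v$ is joined to at least two of them, say $x$ and $y$. Then $vxy$ is a triangle, so $H:=G[\{x,y,v\}\cup V(T-T_{1})]$ contains the $k-1$ vertex-disjoint triangles $vxy,T_{2},\ldots,T_{k-1}$ on exactly $3k-3$ vertices and thus lies in $\mathcal{T}$. But $z$ and $u$ both belong to $V(G-H)$, and $uz\in E(G)$, so $G-H$ contains an edge and hence a path of length $1$, whereas $G-T$ is edgeless and its longest path has length $0$. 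This contradicts $T\in\mathcal{T}^{*}$, completing the argument. The step demanding the most care is the degree count: here the hypothesis is only $\delta(G)\geq\frac{n+k-2}{2}$, weaker than the corresponding bound in Lemma \ref{11}, so one must verify that $n\geq3k+1$ still yields the \emph{strict} inequality $n+k-2>4(k-1)$ needed to drive the pigeonhole; every other step is the same routine triangle-swap used in Claim \ref{C12}.
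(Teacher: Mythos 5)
Your proof is correct and is essentially identical to the paper's own argument: the same degree count $e(u,T)+e(v,T)\geq n+k-2\geq 4k-1>4(k-1)$, the same pigeonhole to find $T_{1}$ with $e(u,T_{1})+e(v,T_{1})\geq5$, the same triangle swap producing $H=G[\{x,y,v\}\cup V(T-T_{1})]\in\mathcal{T}$ with $uz\in E(G-H)$, and the same appeal to the path-maximality defining $\mathcal{T}^{*}$. The paper merely phrases this more tersely (without the explicit contradiction framing), so there is nothing to add.
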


\begin{proof}
Let $T_{1}, T_{2}, \ldots, T_{k-1}$ denote $k-1$ vertex-disjoint triangles contained in $T$. Recall that $n\geq3k+1$. We have $v(G-T)=n-3(k-1)\geq4$. If $u$ and $v$ are two isolated vertices in $G-T$, then $e(u, T)+e(v, T)\geq 2(\frac{n+k-2}{2})=n+k-2\geq4k-1>4(k-1)$. So there exists a triangle in $T$, say $T_{1}$, such that $e(u, T_{1})+e(v, T_{1})\geq5$. Let $V(T_{1})=\{x, y, z\}$. It may be supposed without loss of generality that $u$ is joined to $x, y, z$ and $v$ to $x, y$. Let $H$ denote $G[\{x, y, v\}\cup V(T-T_{1})]$. Then $H\in \mathcal{T}$ and $uz\in G-H$. By the definition of $\mathcal{T}^{*}$, we know that $G-T$ contains at least one edge.
\end{proof}

\begin{claim}\label{C19}
There must exist a graph $T\in\mathcal{T}$ such that $G-T$ contains a $2K_{2}$.
\end{claim}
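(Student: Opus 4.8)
The plan is to follow the same strategy as in Claim \ref{C13}, adapting every degree estimate to the weaker hypothesis $\delta(G)\geq\frac{n+k-2}{2}$. Fix $T'\in\mathcal{T}^{*}$ with vertex-disjoint triangles $T_{1},\dots,T_{k-1}$; by Claim \ref{C18} the graph $G-T'$ contains an edge, say $ab$. If $G-T'$ already contains a $2K_{2}$ we are done with $T=T'$, so assume it does not. Since $G$ has no $kC_{3}$, $G-T'$ is triangle-free; and a triangle-free graph with no two independent edges has matching number at most $1$, so it is a single star together with some isolated vertices. The entire argument then amounts to producing, from this rigid structure, a new member $H\in\mathcal{T}$ whose complement $G-H$ does contain two independent edges.

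First I would dispose of the case where $G-T'$ has at least two isolated vertices $u,v$. Here Claim \ref{C18} gives $e(u,T')+e(v,T')\geq n+k-2$, which exceeds $4(k-1)$ since $n\geq3k+1$; hence some $T_{i}$, say $T_{1}=xyzx$, receives $e(u,T_{1})+e(v,T_{1})\geq5$. Up to relabelling $u$ is joined to $x,y,z$ and $v$ to $x,y$, so replacing $T_{1}$ by the triangle $xyv$ yields $H=G[\{x,y,v\}\cup V(T'-T_{1})]\in\mathcal{T}$; then $z$ is freed and $\{ab,uz\}$ is a $2K_{2}$ in $G-H$, the four vertices $a,b,u,z$ all lying outside $V(H)$. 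Thus we may assume $G-T'$ has at most one isolated vertex, so it is either a star $v_{0}v_{1},\dots,v_{0}v_{t-2}$ together with one isolated vertex $v_{t-1}$ (Case (1)), or a star $v_{0}v_{1},\dots,v_{0}v_{t-1}$ with $t\geq4$ (Case (2)).

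For Case (1) I would use the two end-vertices $v_{1}$ (a leaf) and $v_{t-1}$ (isolated): by Claim \ref{C17}-type bookkeeping their combined degree into $T'$ is at least $n+k-3>4(k-1)$, so some $T_{1}$ absorbs at least $5$ of these edges. Splitting on whether $e(v_{1},T_{1})$ equals $2$ or $3$, I rebuild a triangle on two vertices of $T_{1}$ together with $v_{1}$ (respectively $v_{t-1}$) to obtain $H\in\mathcal{T}$; the freed vertex $z$ of $T_{1}$ together with the star edge $v_{0}v_{2}$ then gives the required $2K_{2}$ in $G-H$, where the leaf $v_{2}$ exists because $t\geq4$. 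For Case (2) the three leaves $v_{1},v_{2},v_{3}$ satisfy $e(\{v_{1},v_{2},v_{3}\},T')\geq\frac{3(n+k-4)}{2}>6(k-1)$, so some $T_{1}$ receives at least $7$ edges from them; pigeonhole then forces (up to relabelling) $v_{1}\sim x,y,z$ and $v_{2}\sim x,y$, and replacing $T_{1}$ by $xyv_{2}$ gives $H\in\mathcal{T}$ with $\{zv_{1},v_{0}v_{3}\}$ a $2K_{2}$ in $G-H$.

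I expect the main obstacle to be purely the bookkeeping rather than any new idea: in each subcase one must verify that the rebuilt family $H$ genuinely lies in $\mathcal{T}$ (that is, the swapped triangle is vertex-disjoint from $T'-T_{1}$) and that the two exhibited edges are independent and have all four endpoints outside $V(H)$. The only quantitative point needing care is that, under the reduced bound $\delta(G)\geq\frac{n+k-2}{2}$, the three threshold inequalities $n+k-2>4(k-1)$, $n+k-3>4(k-1)$ and $\frac{3(n+k-4)}{2}>6(k-1)$ all continue to hold; each follows from our standing hypothesis $n\geq3k+1$, so the slack, though slimmer than in Claim \ref{C13}, remains genuine.
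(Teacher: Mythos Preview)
Your proposal is correct and follows essentially the same argument as the paper's proof, including the same case split (two isolated vertices; star plus one isolated vertex; full star) and the same triangle-swap constructions. The only cosmetic slip is that the bound $e(u,T')+e(v,T')\geq n+k-2$ comes directly from the minimum-degree hypothesis, not from Claim~\ref{C18}; otherwise your estimates and verifications match the paper line for line.
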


\begin{proof}
Let $T'$ be a graph in $\mathcal{T}^{*}$ and let $T_{1}, T_{2}, \ldots, T_{k-1}$ denote $k-1$ vertex-disjoint triangles contained in $T'$. By Claim \ref{C18}, we see that $G-T'$ contains at least one edge. Suppose that $ab$ is an edge in $G-T'$.

If $G-T'$ contains more than one isolated vertex, then let $u$ and $v$ be two isolated vertices in $G-T'$. Hence $e(u, T')+e(v, T')\geq n+k-2\geq4k-1>4(k-1)$. So it may be assumed without loss of generality that $e(u, T_{1})+e(v, T_{1})\geq5$. Let $V(T_{1})=\{x, y, z\}$. Without loss of generality, assume that $u$ is joined to $x, y, z$ and $v$ to $x, y$. Let $H$ denote $G[\{x, y, v\}\cup V(T'-T_{1})]$. Then $H\in \mathcal{T}$ and $\{ab, uz\}$ is a $2K_{2}$ in $G-H$. So next we will assume that $G-T'$ contains at most one isolated vertex. If $G-T'$ contains no $2K_{2}$'s, then one of the following two alternatives holds: $(1)$ $V(G-T')=\{v_{0}, v_{1}, \ldots, v_{t-1}\}$
and $E(G-T')=\{v_{0}v_{1}, v_{0}v_{2}, \ldots, v_{0}v_{t-2}\}$; $(2)$ $V(G-T')=\{v_{0}, v_{1}, \ldots, v_{t-1}\}$
and $E(G-T')=\{v_{0}v_{1}, v_{0}v_{2}, \ldots, v_{0}v_{t-1}\}$, where $t\geq4$ (since $v(G-T')=n-3(k-1)\geq4$).

If $(1)$ is the case then $e(v_{1}, T')+e(v_{t-1}, T')\geq\frac{n+k-2}{2}-1+\frac{n+k-2}{2}=n+k-3\geq4k-2>4(k-1)$, so it may be assumed without loss of generality that $e(v_{1}, T_{1})+e(v_{t-1}, T_{1})\geq5$. Either $e(v_{1}, T_{1})=2$ or $e(v_{1}, T_{1})=3$. If $e(v_{1}, T_{1})=2$ then it may be supposed without loss of generality that $v_{1}x, v_{1}y\in E(G)$, and $e(v_{t-1}, T_{1})=3$. Let $H$ denote $G[\{x, y, v_{1}\}\cup V(T'-T_{1})]$. Then $H\in\mathcal{T}$ and $\{zv_{t-1}, v_{0}v_{2}\}$ is a $2K_{2}$ in $G-H$. If $e(v_{1}, T_{1})=3$, then $e(v_{t-1}, T_{1})\geq2$ and it may be supposed without loss of generality that $v_{t-1}x, v_{t-1}y\in E(G)$. Let $H$ denote $G[\{x, y, v_{t-1}\}\cup V(T'-T_{1})]$. Then $H\in\mathcal{T}$ and $\{zv_{1}, v_{0}v_{2}\}$ is a $2K_{2}$ in $G-H$.

If $(2)$ is the case then $e(\{v_{1}, v_{2}, v_{3}\}, T')\geq3(\frac{n+k-2}{2}-1)\geq6k-5>6(k-1)$, so it may be assumed without loss of generality that $e(\{v_{1}, v_{2}, v_{3}\}, T_{1})\geq7$, and further that $v_{1}$ is joined to $x, y, z$ and $v_{2}$ to $x, y$. Let $H$ denote $G[\{x, y, v_{2}\}\cup V(T'-T_{1})]$. Then $H\in\mathcal{T}$ and $\{zv_{1}, v_{0}v_{3}\}$ is a $2K_{2}$ in $G-H$.
\end{proof}

\begin{claim}\label{C110}
If $T\in\mathcal{T}$ such that $G-T$ contains a $2K_{2}$, then $G-T$ is a bipartite graph.
\end{claim}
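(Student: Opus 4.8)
The plan is to argue by contradiction and to follow the template of Claim~\ref{C15}, the essential difference being that the weaker hypothesis $\delta(G)\ge\frac{n+k-2}{2}$ (rather than $\frac{n+k-1}{2}$) no longer makes the relevant edge counts tight; that is exactly where the extra work will concentrate. Suppose $G-T$ is not bipartite and let $C=v_1v_2\cdots v_lv_1$ be a \emph{shortest} odd cycle in $G-T$. Since $G$ contains no $kC_3$ and $T$ already supplies $k-1$ vertex-disjoint triangles, $G-T$ is triangle-free, so $l\ge5$; moreover, being shortest, $C$ is chordless, hence each pair $\{v_iv_{i+1},v_{i+2}v_{i+3}\}$ is a $2K_2$ in $G-T$.

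First I would record the edge counts between $C$ and the triangles $T_1,\dots,T_{k-1}$ of $T$. By Claim~\ref{C17}, every cycle edge $v_iv_{i+1}$ satisfies $e(v_i,T)+e(v_{i+1},T)\ge4k-5$, while Lemma~\ref{l4} forces $e(M,T_j)\le8$ for every $2K_2$ $M\subseteq G-T$ and every $T_j$ (otherwise $G[V(M)\cup V(T_j)]$ contains $2C_3$, and together with the remaining $k-2$ triangles this yields a $kC_3$). Summing the lower bound around $C$ and the upper bound over the consecutive $2K_2$'s gives $\frac{l(4k-5)}{2}\le e(C,T)\le 2l(k-1)$. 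In the setting of Lemma~\ref{11} the two ends coincide, which is precisely what let Claim~\ref{C14} pin $e(M,T_j)=8$ and then force the rigid $G_1$-structure used in Claim~\ref{C15}; here there is a slack of order $l$, so I can only assert that the total deficiency $\sum_j\bigl(2l-e(C,T_j)\bigr)$ is small.

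The core of the argument is then the same odd-cycle parity mechanism as in Claim~\ref{C15}, made robust against this slack. For a fixed triangle $T_j$ I would apply Lemma~\ref{W1} to each length-three subpath $v_{i-1}v_iv_{i+1}$ of $C$: any instance in which part~$(\romannumeral1)$ applies, or part~$(\romannumeral2)$ produces $2C_3$, immediately contradicts the $kC_3$-freeness, so I may assume every subpath falls into the exceptional configuration of Lemma~\ref{W1}$(\romannumeral2)$ or carries too few edges to $T_j$. As in Claim~\ref{C15}, this confines the adjacencies between $C$ and $T_j$ to a $G_1$-type pattern, one vertex of $T_j$ adjacent to (almost) all of $C$ and the other two adjacent to an alternating, almost properly $2$-colored set of cycle vertices; since $l$ is odd, such an alternation cannot close up and must produce two consecutive cycle vertices sharing a neighbor in $T_j$, which together with a disjoint triangle through the dominating vertex gives the forbidden $2C_3$. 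The main obstacle, absent in Claim~\ref{C15}, is that the slack permits a single ``defect'' cycle vertex that is nearly nonadjacent to $T_j$ and could absorb the parity failure; I expect to dispose of it by using the global bound $\delta(G)\ge\frac{n+k-2}{2}$ to show that any such defect vertex is adjacent to at least two vertices of some other triangle $T_{j'}$, for which the cycle is then defect-free, and running the parity argument with $T_{j'}$ in place of $T_j$.
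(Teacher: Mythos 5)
Your setup matches the paper's opening: contradiction via a shortest (hence chordless) odd cycle $C$ with $l\geq5$, the bound $e(M,T_j)\leq 8$ for consecutive $2K_2$'s from Lemma~\ref{l4}, and the lower bound from Claim~\ref{C17}, giving total deficiency at most $l/2$. But the core of your argument has a genuine gap. The parity mechanism of Claim~\ref{C15} in the proof of Lemma~\ref{11} needs the rigid $G_1$ structure --- one vertex of $T_j$ adjacent to \emph{all} of $C$, the other two with alternating neighborhoods --- and that structure is available there only because the stronger degree bound makes every count tight ($e(M,T_j)=8$ exactly, Claim~\ref{C14}), which is what pins the configurations to $G_1$, $G_2$, $G_3$. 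Here the slack is of order $l$ while there are $k-1$ triangles, so when $l\geq 2(k-1)$ no triangle need be deficiency-free, and configurations with $e(M,T_j)\in\{6,7\}$ that look nothing like $G_1$ survive all the constraints you state. Your proposed repair (a single ``defect'' cycle vertex, disposed of by finding another triangle $T_{j'}$ to which it sends two edges, ``for which the cycle is then defect-free'') is unsubstantiated: adjacency of one vertex to two vertices of $T_{j'}$ says nothing about the pattern of the other $l-1$ cycle vertices relative to $T_{j'}$, and defects can occur for every triangle simultaneously. You also conflate two different dominating patterns: the exceptional configuration of Lemma~\ref{W1}({\romannumeral2}) has a \emph{cycle} vertex adjacent to all of $T_j$, whereas the $G_1$ pattern your parity step needs has a \emph{triangle} vertex adjacent to all of $C$; the first does not feed into the second.

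The paper resolves exactly these difficulties with a different endgame. First, it proves by a delicate case analysis (using Lemma~\ref{W1} and the bound $e(\{v_iv_{i+1},v_{i+2}v_{i+3}\},T_j)\geq 6$) that $e(v_i,T_j)\leq 2$ for every cycle vertex and every triangle --- that is, it \emph{eliminates} the W1({\romannumeral2})-type dominating configurations rather than exploiting them. Then, instead of any parity argument, it finishes with a global degree count over $V(C)$: each cycle vertex sends at most $2(k-1)$ edges to $T$, exactly $2$ into $C$ (chordlessness), and each vertex of $G-T-C$ receives at most $2$ edges from $C$ (the shortest-odd-cycle property), so $\frac{l(n+k-2)}{2}\leq l(2k-2)+2l+2(n-3k+3-l)$. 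This is numerically contradictory except in boundary cases: $v(G-T-C)=0$ forces $n=3k+2$ with all counts tight, where a $2C_3$ is found inside $G[V(C)\cup V(T_1)]$ directly; and $v(G-T-C)\geq1$ forces $l\geq7$, where two adjacent cycle vertices get a common neighbour in $G-T-C$ by pigeonhole, yielding a $kC_3$. None of this counting phase appears in your proposal, and without it (or a genuinely robust substitute for the parity step) the proof does not go through.
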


\begin{proof}
By contradiction. Let $C=v_{1}v_{2}\cdots v_{l}v_{1}$ (indices are taken modulo $l$) be the shortest odd cycle in $G-T$. Since $G$ contains no $kC_{3}$'s, we know that $G-T$ contains no triangles which implies that $l\geq5$. Let $T_{1}, T_{2}, \ldots, T_{k-1}$ denote $k-1$ vertex-disjoint triangles contained in $T$. Then we have $e(\{v_{i}v_{i+1}, v_{i+2}v_{i+3}\}, T_{j})\leq8$ for every $1\leq i\leq l$ and $1\leq j\leq k-1$. Otherwise it may be assumed without loss of generality that $e(\{v_{1}v_{2}, v_{3}v_{4}\}, T_{1})\geq9$. Then by Lemma \ref{l4}, $G[\{v_{1}, v_{2}, v_{3}, v_{4}\}\cup V(T_{1})]$ contains two vertex-disjoint triangles, together with $T_{2}, \ldots, T_{k-1}$ constituting $k$ vertex-disjoint triangles contained in $G$, a contradiction. By Claim \ref{C17}, we have $e(\{v_{i}v_{i+1}, v_{i+2}v_{i+3}\}, T)\geq2(4k-5)=8k-10$ for every $1\leq i\leq l$. This implies that $e(\{v_{i}v_{i+1}, v_{i+2}v_{i+3}\}, T_{j})\geq6$ for every $1\leq i\leq l$ and $1\leq j\leq k-1$. In the rest of the proof, let $V(T_{i})=\{t_{i_{1}}, t_{i_{2}}, t_{i_{3}}\}$ for all $T_{i}\subseteq T$.

Now, we will show that $e(v_{i}, T_{j})\leq2$ for every $1\leq i\leq l$ and $1\leq j\leq k-1$. Suppose the contrary. It may be assumed without loss of generality that $e(v_{1}, T_{1})=3$. Then $e(v_{i}v_{i+1}, T_{1})\leq3$ for every $2\leq i\leq l-1$; otherwise there exists a vertex in $V(T_{1})$, say $t_{1_{1}}$, such that $t_{1_{1}}v_{i}v_{i+1}t_{1_{1}}$ is a triangle, together with $v_{1}t_{1_{2}}t_{1_{3}}v_{1}, T_{2}, \ldots, T_{k-1}$ constituting $k$ vertex-disjoint triangles contained in $G$, a contradiction. Since $e(\{v_{2}v_{3}, v_{l-1}v_{l}\}, T_{1})\geq6$, we know that $e(v_{2}v_{3}, T_{1})=3$ and $e(v_{l-1}v_{l}, T_{1})=3$. Furthermore, we have $e(v_{2}, T_{1})\leq2$; otherwise $e(\{v_{1}v_{2}, v_{l-1}v_{l}\}, T_{1})\geq3+3+3=9$. Similarly, we have $e(v_{l}, T_{1})\leq2$; otherwise $e(\{v_{l}v_{1}, v_{2}v_{3}\}, T_{1})\geq9$. If $e(v_{2}, T_{1})=0$, then $e(v_{i}v_{i+1}, T_{1})=3$ for every $3\leq i\leq l-1$; otherwise $e(\{v_{1}v_{2}, v_{i}v_{i+1}\}, T_{1})\leq3+2=5$. Hence we know that $e(v_{3}, T_{1})=3$, which implies that $e(v_{4}, T_{1})=0$, and so on. Finally, we have $e(v_{l}, T_{1})=3$, a contradiction. If $e(v_{2}, T_{1})=1$, then $e(v_{3}, T_{1})=2$ by $e(v_{2}v_{3}, T_{1})=3$. By Lemma \ref{W1}, we see that $G[\{v_{1}, v_{2}, v_{3}\}\cup V(T_{1})]$ contains two vertex-disjoint triangles, together with $T_{2}, \ldots, T_{k-1}$ constituting $k$ vertex-disjoint triangles contained in $G$, a contradiction. So $e(v_{2}, T_{1})=2$. Similarly, we have $e(v_{l}, T_{1})=2$. Now if $l\geq7$, then by $e(\{v_{i}v_{i+1}, v_{i+2}v_{i+3}\}, T_{1})\geq6$ for every $2\leq i\leq l-1$, we know that $e(v_{i}v_{i+1}, T_{1})=3$ with $2\leq i\leq l-1$. By $e(v_{2}, T_{1})=2$, one can easily check that $e(v_{3}, T_{1})=1$, $e(v_{4}, T_{1})=2$, \ldots, $e(v_{l}, T_{1})=1$, contrary to $e(v_{l}, T_{1})=2$. So we have $l=5$. Since $e(v_{2}, T_{1})=2$, $e(v_{2}v_{3}, T_{1})=3$, $e(v_{5}, T_{1})=2$ and $e(v_{4}v_{5}, T_{1})=3$, we see that $e(v_{3}, T_{1})=1$ and $e(v_{4}, T_{1})=1$. By Lemma \ref{W1}, either $G[\{v_{5}, v_{1}, v_{2}\}\cup V(T_{1})]$ contains two vertex-disjoint triangles, or $N_{T_{1}}(v_{5})=N_{T_{1}}(v_{2})$. If $G[\{v_{5}, v_{1}, v_{2}\}\cup V(T_{1})]$ contains two vertex-disjoint triangles, say $T'$ and $T''$, then $T', T''$ and $T_{2}, \ldots, T_{k-1}$ constitute $k$ vertex-disjoint triangles contained in $G$, a contradiction. If $N_{T_{1}}(v_{5})=N_{T_{1}}(v_{2})$, then $G[V(C)\cup V(T_{1})]$ contains two vertex-disjoint triangles, together with $T_{2}, \ldots, T_{k-1}$ constituting $k$ vertex-disjoint triangles contained in $G$, a contradiction.

Since $C$ is the shortest odd cycle in $G-T$, each vertex in $G-T-C$ can be adjacent to at most $2$ vertices in the $C$; otherwise we can find a shorter odd cycle. Hence
$$\frac{l(n+k-2)}{2}\leq\sum\nolimits_{v\in V(C)}d_{G}(v)\leq l(2k-2)+2l+2(n-3k+3-l).$$
If $v(G-T-C)=0$, then
$$\frac{l(n+k-2)}{2}\leq\sum\nolimits_{v\in V(C)}d_{G}(v)\leq l(2k-2)+2l=2kl.$$
We assert that $n\leq3k+2$. Otherwise $\frac{l(n+k-2)}{2}>2kl$. Since $n=3k-3+l\geq3k+2$, we have $n=3k+2$. Hence all the equalities are attained in above. This implies that $e(v_{i}, T_{j})=2$ for every $1\leq i\leq l$ and $1\leq j\leq k-1$. One can easily check that $G[V(C)\cup V(T_{1})]$ contains two vertex-disjoint triangles, together with $T_{2}, \ldots, T_{k-1}$ constituting $k$ vertex-disjoint triangles contained in $G$, a contradiction. So we have $v(G-T-C)\geq1$. Now $n\geq3k-3+l+1\geq3k+3$. We assert that $l\geq7$. Otherwise $\frac{l(n+k-2)}{2}>l(2k-2)+2l+2(n-3k+3-l)$. Since $e(v_{i}, T_{j})\leq2$ for every $v_{i}\in V(C)$ and $T_{j}\subseteq T$, we know that $e(v_{i}, G-T-C)\geq\frac{n+k-2}{2}-2(k-1)-2=\frac{n-3k-2}{2}$ for each $v_{i}\in V(C)$. Note that $v(G-T-C)=n-3k+3-l\leq n-3k-4$. Thus there exists a vertex $u\in V(G-T-C)$ such that $uv_{1}v_{2}u$ is a triangle, which implies that $G$ contains a $kC_{3}$, a contradiction.
\end{proof}

Now, let $T$ be a graph in $\mathcal{T}$ such that $G-T$ contains a $2K_{2}$ and let $T_{1}, T_{2}, \ldots, T_{k-1}$ denote $k-1$ vertex-disjoint triangles contained in $T$. By Claim \ref{C110}, we know that $G-T$ is a bipartite graph. Let $(A, B)$ be a bipartition of $G-T$. Since $G-T$ contains a $2K_{2}$, we have $|A|\geq2$ and $|B|\geq2$. If there exist vertices $a'\in A$ and $b'\in B$ such that $d_{G-T}(a')\geq\frac{n-3k+3}{2}$ and $d_{G-T}(b')\geq\frac{n-3k+3}{2}$, then $G-T=K_{\frac{n-3k+3}{2}, \frac{n-3k+3}{2}}$. Otherwise, we have $d_{G-T}(a)\leq\frac{n-3k+3}{2}-1=\frac{n-3k+1}{2}$ for every $a\in A$ or $d_{G-T}(b)\leq\frac{n-3k+3}{2}-1=\frac{n-3k+1}{2}$ for every $b\in B$. Without loss of generality, assume that $d_{G-T}(a)\leq\frac{n-3k+1}{2}$ for all $a\in A$. This implies that for any $a\in A$,
$$e(a, T)\geq\left\lceil\frac{n+k-2}{2}-\frac{n-3k+1}{2}\right\rceil\geq2k-1>2(k-1).$$
For any $2K_{2}$ of $G-T$, denote by $M$, we have $e(M, T_{i})\leq8$ for every $1\leq i\leq k-1$. Otherwise by Lemma \ref{l4}, $G[M\cup V(T_{i})]$ contains two vertex-disjoint triangles, together with $\{T_{1}, \ldots, T_{k-1}\}\setminus\{T_{i}\}$ constituting $k$ vertex-disjoint triangles contained in $G$, a contradiction. By Claim \ref{C17}, we have $e(M, T)\geq2(4k-5)=8k-10$. This implies that $e(M, T_{i})\geq6$ for every $1\leq i\leq k-1$. We distinguish two cases.

\setcounter{case}{0}
\begin{case}
$|A|=2$.
\end{case}

Let $A=\{a_{1}, a_{2}\}$ and let $M'=\{a_{1}b_{1}, a_{2}b_{2}\}$ be a $2K_{2}$ in $G-T$. Then
$$2n+2k-4=\frac{4(n+k-2)}{2}\leq\sum\nolimits_{v\in V(M')}d_{G}(v)\leq8(k-1)+\frac{2(n-3k+1)}{2}+4=n+5k-3.$$
This implies that $n\leq3k+1$; otherwise $2n+2k-4>n+5k-3$. Recall that $n\geq3k+1$. We have $n=3k+1$, which implies that the equalities are attained in the above. Hence we see that $d_{G-T}(b_{1})=d_{G-T}(b_{2})=2$ and $d_{G-T}(a_{1})=d_{G-T}(a_{2})=\frac{n-3k+1}{2}=1\neq2$, a contradiction.

\begin{case}
$|A|\geq3$.
\end{case}

Let $a$ be an arbitrary vertex of $A$. Since $e(a, T)>2(k-1)$, it may be assumed without loss of generality that $e(a, T_{1})=3$. Let $V(T_{1})=\{x, y, z\}$. If $G-T-a$ contains a $2K_{2}$, say, $M'=\{a_{1}b_{1}, a_{2}b_{2}\}$, then $e(T_{1}, a_{i}b_{i})\leq3$ with $i=1, 2$; otherwise there exists a vertex in $V(T_{1})$, say $x$, such that $xa_{i}b_{i}x$ is a triangle, together with $ayza, T_{2}, \ldots, T_{k-1}$ constituting $k$ vertex-disjoint triangles contained in $G$, a contradiction. Hence we have $e(M', T_{1})\leq6$. This implies that $e(a, T_{i})\leq2$ for every $2\leq i\leq k-1$; otherwise $e(M', T)\leq6+6+8(k-3)=8k-12<8k-10$. Hence we have $d_{G-T}(a)\geq\frac{n+k-2}{2}-(3+2(k-2))=\frac{n-3k}{2}$. Furthermore, we see that
$$2n+2k-4=\frac{4(n+k-2)}{2}\leq\sum\nolimits_{v\in V(M')}d_{G}(v)\leq6+8(k-2)+(n-3k+1)+2|A|.$$
By $2n+2k-4\leq 6+8(k-2)+(n-3k+1)+2|A|$, we have $|A|\geq\frac{n-3k+5}{2}$. This implies that $|B|=n-3k+3-|A|\leq\frac{n-3k+1}{2}$. Note that $\frac{n-3k}{2}\leq d_{G-T}(a)\leq\frac{n-3k+1}{2}$. If $n-3k$ is odd, then $G-T=K_{\frac{n-3k+5}{2}, \frac{n-3k+1}{2}}$. If $n-3k$ is even, then $G-T=K_{\frac{n-3k+6}{2}, \frac{n-3k}{2}}$.

If $G-T$ contains a $3K_{2}$, then $n\geq3k-3+6=3k+3$ and for any $a\in A$, $G-T-a$ contains a $2K_{2}$. From the above analysis, now whether $n-3k$ is odd or even, we see that $G-T$ is a complete bipartite graph whose two parts are at least $2$ in size.

Suppose that $G-T$ contains no $3K_{2}$'s. Let $M''=\{a'b', a''b''\}$ be a $2K_{2}$ in $G-T$. Then $d_{G-T}(a)\geq\frac{n-3k}{2}$ for all $a\in A\setminus\{a', a''\}$ and $|B|\leq\frac{n-3k+1}{2}$. Since $G-T$ contains no $3K_{2}$'s, we know that $d_{G-T}(a)\leq2$ for each $a\in A\setminus\{a', a''\}$. Hence $\frac{n-3k}{2}\leq2$, which implies that $n\leq3k+4$. So $|B|\leq\frac{n-3k+1}{2}\leq2$. Recall that $|B|\geq2$. We have $|B|=2$, which implies that $n\geq3k+3$. So $|A|=n-3k+3-|B|\geq4$ and for all $a\in A\setminus\{a', a''\}$, $d_{G-T}(a)\geq\frac{n-3k}{2}\geq2$. Since $|B|=2$, we have $d_{G-T}(a)=2$, where $a\in A\setminus\{a', a''\}$. This implies that both $G-T-a'$ and $G-T-a''$ contain a $2K_{2}$. Hence $d_{G-T}(a')=d_{G-T}(a'')=2$. We see that $G-T$ is a complete bipartite graph whose two parts are at least $2$ in size.

The proof of Lemma \ref{111} is complete. {\hfill$\Box$}


\begin{thebibliography}{13}
\bibitem{Alon}
N. Alon, On a conjecture of Erd\H{o}s, Simonovits and S\'{o}s concerning anti-Ramsey theorems, J. Graph Theory, 7 (1983) 91--94.

\bibitem{Bondy_Murty}
J.A. Bondy and U.S.R. Murty, Graph Theory with Applications, Macmillan London and Elsevier, New York (1976).

\bibitem{CLT}
H. Chen, X. Li and J. Tu, Complete solution for the rainbow numbers of matchings, Discrete Math., 309 (2009) 3370--3380.

\bibitem{Dirac}
G.A. Dirac, On the maximal number of independent triangles in graphs, Abhandlungen Aus Dem Mathematischen Seminar Der Universitt Hamburg, 26 (1963) 78--82.

\bibitem{ESS}
P. Erd\H{o}s, M. Simonovits and V.T. S\'{o}s, Anti-Ramsey theorems, in Infinite and Finite sets (Colloq. Keszthely 1973), Colloq. Math. Soc. J\'{a}nos Bolyai 10 (1975) 633--643.

\bibitem{FKSS}
S. Fujita, A. Kaneko, I. Schiermeyer and K. Suzuki, A rainbow $k$-matching in the complete graph with $r$ colors, Electron. J. Combin., 16 (2009) $\sharp$R51.

\bibitem{FMO}
S. Fujita, C. Magnant and K. Ozeki, Rainbow generalizations of Ramsey theory: a survey, Graphs Combin., 26 (2010) 1--30.

\bibitem{HY}
R. Haas and M. Young, The anti-Ramsey number of perfect matching, Discrete Math., 312 (2012) 933--937.

\bibitem{Hajnal_Szemeredi}
A. Hajnal and E. Szemer\'{e}di, Proof of a conjecture of Erd\H{o}s, in Combinatorial Theory and its Applications Vol. II, ed. by P. Erd\H{o}s, A. Renyi and V.T. S\'{o}s, Colloq. Math. Soc. J. Bolyai 4, North-Holland, Amsterdam, 1970, 601--623.

\bibitem{Jiang}
T. Jiang, Edge-colorings with no large polychromatic stars, Graphs Combin., 18 (2002) 303--308.

\bibitem{JW}
T. Jiang and D.B. West, On the Erd\H{o}s-Simonovits-S\'{o}s conjecture about the anti-Ramsey number of a cycle, Combin. Probab. Comput., 12 (2003) 585--598.

\bibitem{JW2}
T. Jiang and D.B. West, Edge-colorings of complete graphs that avoid polychromatic trees, Discrete Math., 274 (2004) 137--145.

\bibitem{JSW}
T. Jiang, I. Schiermeyer and D.B. West, The Erd\H{o}s-Simonovits-S\'{o}s conjecture for $k\leq7$, unpublished manuscript.

\bibitem{JL}
Z. Jin and X. Li, Anti-Ramsey numbers for graphs with independent cycles, Electron. J. Combin., 16 (2009) $\sharp$R85.


\bibitem{MN2}
J.J. Montellano-Ballesteros and V. Neumann-Lara, An anti-Ramsey theorem, Combinatorica, 22 (2002) 445--449.

\bibitem{MN}
J.J. Montellano-Ballesteros and V. Neuman-Lara, An anti-Ramsey theorem on cycles, Graphs Combin., 21 (2005) 343--354.

\bibitem{Sch}
I. Schiermeyer, Rainbow numbers for matchings and complete graphs, Discrete Math., 286 (2004) 157--162.

\bibitem{SS}
M. Simonovits and V.T. S\'{o}s, On restricting colorings of $K_{n}$, Combinatorica, 4 (1984) 101--110.

\bibitem{Wang}
H. Wang, On the maximum number of independent cycles in a graph, Discrete Math., 205 (1999) 183--190.

\bibitem{Yuan}
L. Yuan and X. Zhang, Anti-Ramsey numbers of graphs with some decomposition family sequences, arXiv:1903.10319 [math.CO].
\end{thebibliography}
\end{document}